\numberwithin{equation}{section}
\newtheorem{theorem}{Theorem}[section]
\newtheorem{lemma}{Lemma}[section]
\newtheorem{proposition}{Proposition}[section]
\newtheorem{definition}[theorem]{Definition}
\newtheorem{remark}{Remark}[section]
\def\ds{\displaystyle}
\begin{document}

\title[Boundary Harnack in a slit domain]{The $C^{1,\alpha}$ boundary Harnack principle in a slit domain and its application to the Signorini problem}
\author{Chilin Zhang}
\address{School of Mathematical Sciences, Fudan University, Shanghai 200433, China}\email{zhangchilin@fudan.edu.cn}
\begin{abstract}
We prove the $C^{2,\alpha}$ regularity of the free boundary in the Signorini problem with variable coefficients. We use a $C^{1,\alpha}$ boundary Harnack inequality in slit domains. The key method is to study a non-standard degenerate elliptic equation and obtain a $C^{1,\alpha}$ Schauder estimate.
\end{abstract}
\maketitle

\section{Introduction}
\subsection{Basic theory in the Signorini problem}
The thin obstacle problem, or the Signorini problem, consists in minimizing the energy
\begin{equation*}
    J(U)=\int_{D}\frac{1}{2}(\nabla U)^{t}A(\nabla U)+U(x)F(x)dx,\quad U\Big|_{\partial D}=g
\end{equation*}
in a domain $D\subseteq\mathbb{R}^{n+1}$ under the further constraint that $U\geq\psi$ on a $n$-dimensional hypersurface $M\subseteq D$ dividing $D$ into $D_{1}$ and $D_{2}$. One can imagine a scenario of a carpet hung over a string. The minimizing solution satisfies
\begin{equation}\label{general Signorini}
    \left\{\begin{aligned}
        &\mathrm{div}(A\nabla U)=F(x)&\mbox{when }U>\psi\mbox{ or }x\notin M,\\
        &\mathrm{div}(A\nabla U)\leq F(x)&\mbox{ in the weak sense}.
    \end{aligned}\right.
\end{equation}
The free boundary is a codimension-$2$ surface defined as follows:
\begin{definition}
    Let $U\geq\psi$ on $M$ and assume that $U$ satisfies \eqref{general Signorini}. The free boundary $\Gamma\subseteq M$ is defined as $\Gamma:=\partial\Big|_{M}\{U=\psi\}$.
\end{definition}

Without loss of generality, we can assume $\psi=0$, by replacing $U$ with $U-\psi$ and also replacing $F(x)$ with $F(x)-\mathrm{div}(A\nabla\psi)$.

The Signorini problem has applications in semipermeable membranes and perpetual American options. See \cite{PSU12} for more information.

The constant coefficient model is called the ``model problem S'' in \cite{PSU12}. It consists of studying the special case where $A=\delta^{ij}$, $F,\psi=0$, $M=\mathbb{R}^{n}=\{x_{n+1}=0\}$. We further assume that $U$ is an even function in the $x_{n+1}$-direction. In this case, \eqref{general Signorini} becomes
\begin{equation*}
    \Delta U=0\quad\mbox{when}\ U>0\ \mbox{or}\ x_{n+1}\neq0.
\end{equation*}
In fact, the $x_{n+1}$-even function $U$ can also be understood as the Caffarelli-Silvestre extension of a $\frac{1}{2}$-harmonic function $U\Big|_{\mathbb{R}^{n}}$ corresponding to the nonlocal operator $(-\Delta)^{1/2}$, see \cite{CS07}.

In \cite{AC04}, Athanasopoulos and Caffarelli showed that the solution $U(x)$ of the constant coefficient model is $C^{1,1/2}$ on both sides of $\mathbb{R}^{n}$, and this is the optimal regularity of solutions.

In \cite{ACS08}, free boundary points are classified according to their blow-up and frequency. Let $x_{0}\in\Gamma$ be a boundary point, the Almgren's frequency function is defined as
\begin{equation*}
    N_{x_{0}}(U,r):=r\frac{\int_{B_{r}(x_{0})}|\nabla U|^{2}}{\int_{\partial B_{r}(x_{0})}U^{2}}.
\end{equation*}
This is an increasing function. If $\ds N=\lim_{r\to0}N_{x_{0}}(U,r)\in(1,2)$, then the only possibility is $N=3/2$. In this situation, up to a subsequence, the rescaled function
\begin{equation*}
    U_{r}(x):=\frac{U(rx+x_{0})}{(\fint_{\partial B_{r}(x_{0})}U^{2})^{1/2}}
\end{equation*}
converges in $C^{1,\alpha}$ sense to a $3/2$-homogeneous function $C_{n}\cdot Re((x_{n}+ix_{n+1})^{3/2})$ near $0$, after a coordinate rotation. We say such a free boundary point is a regular point.

Assume that the origin is a regular point, the free boundary $\Gamma$ is locally a $C^{1,\alpha}$ graph (a $(n-1)$-dimensional submanifold) for some small $\alpha$, as shown in \cite{ACS08}. In \cite{CSS08}, the result is extended to the fractional Laplace case.

In \cite{ACS08}, the authors first showed that the free boundary is Lipschitz near a regular point. If the rescaling $U_{r}$ converges to $C_{n}\cdot Re((x_{n}+ix_{n+1})^{3/2})$, then for every unit vector $\vec{e}$ with
\begin{equation*}
    \vec{e}\cdot\vec{e}_{n+1}=0,\quad \vec{e}\cdot\vec{e}_{n}>0,
\end{equation*}
and for any $\epsilon$, there is a sufficiently small $r$ (for simplicity, let $r=1$), so that
\begin{equation*}
    \partial_{\vec{e}}U_{1}(x)\geq C_{n}\partial_{\vec{e}}Re((x_{n}+ix_{n+1})^{3/2})-\epsilon\mbox{ in }B_{1}.
\end{equation*}
Notice that $\partial_{\vec{e}}U_{1}(x)$ vanishes on $\{x_{n+1}=0,U(x)=0\}$ and is harmonic elsewhere. The authors then showed that for vectors $\vec{e}$ satisfying
\begin{equation*}
    \vec{e}\cdot\vec{e}_{n+1}=0,\quad \vec{e}\cdot\vec{e}_{n}\geq\delta>0
\end{equation*}
for some sufficiently small $\delta$, it holds that
\begin{equation*}
    \partial_{\vec{e}}U_{1}(x)\geq0\mbox{ in }B_{1/2}.
\end{equation*}
Therefore, the free boundary is a Lipschitz graph
\begin{equation*}
    \Gamma=\{(x_{1},\cdots,x_{n},0):x_{n}=\gamma(x_{1},\cdots,x_{n-1})\}
\end{equation*}
near the origin.

After that, the authors applied the boundary Harnack principle to the ratios $\partial_{i}U/\partial_{n}U(i<n)$ to show the $C^{1,\alpha}$-regularity of the free boundary. More precisely, they first straightened the free boundary using the coordinate change
\begin{equation*}
    (x_{1},\cdots,x_{n+1})\rightarrow(x_{1},\cdots,x_{n-1},x_{n}-\gamma(x_{1},\cdots,x_{n-1}),x_{n+1}),
\end{equation*}
and then applied the following Lipschitz coordinate change to open the ``slit''
\begin{equation*}
    (x_{1},\cdots,x_{n-1},\rho\cos{\theta},\rho\sin{\theta})\rightarrow(x_{1},\cdots,x_{n-1},\rho\cos{\frac{\theta}{2}},\rho\sin{\frac{\theta}{2}}).
\end{equation*}
The original partial derivatives $u_{i}=\partial_{i}U$ in the original coordinate satisfy the uniformly elliptic equation:
\begin{equation*}
    \mathrm{div}(A\nabla u_{i})=0,\quad\lambda I\leq A\leq\Lambda I
\end{equation*}
in the half space $\mathbb{R}^{n+1}_{+}$ and vanish at the boundary. By a version of the boundary Harnack principle in \cite{CFMS81}, the authors proved that the ratios $\partial_{i}U/\partial_{n}U(i<n)$ are $C^{\alpha}$.

For more general Signorini problems \eqref{general Signorini} with variable coefficients, it was shown in \cite{G09} that when $M,\psi$ are $C^{1,\beta}$ for some $\beta>1/2$, $A$ is $C^{1,\gamma}$ for all $\gamma>0$ and $F$ is just H\"older continuous, then $U$ is $C^{1,1/2}$ on both sides of $M$. Later, in \cite{GSVG14,KRS16,RS17}, the optimal regularity of $U$ was improved subsequently to Lipschitz, $W^{1,p}$ and $C^{\alpha}$ coefficients. 

The regular free boundary points are also defined using the Almgren' frequency.
\begin{definition}
    Let $U\geq\psi=0$ on $M$ be a solution of \eqref{general Signorini}. We say that a free boundary point $x_{0}\in\Gamma$ is regular, if the Almgren's frequency function
    \begin{equation*}
        N_{x_{0}}(U,r):=r\frac{\int_{B_{r}(x_{0})}|\nabla U|^{2}}{\int_{\partial B_{r}(x_{0})}U^{2}}
    \end{equation*}
    converges to $3/2$ when $r\to0$.
\end{definition}

In the variable coefficient case, the $C^{1,\alpha}$ regularity of the free boundary $\Gamma$ near a regular point was obtained in \cite{GPSVG16,KRS17a,RS17} under the regularity assumptions above.

\subsection{Higher regularity of the free boundary.}
Once we know that $\Gamma\in C^{1,\alpha}$, we can study the higher regularity of $\Gamma$. The best result is obtained in \cite{KPS15} for the constant coefficients model and in \cite{KRS17b} for analytic coefficients, both showing that $\Gamma\in C^{\omega}$ (an analytic codimension-$2$ surface) near a regular point using the partial hodograph-Legendre transformation.

One can also use the method of higher order boundary Harnack principle to study the regularity of the free boundary. For example, it is shown in \cite{DS14} that $\Gamma\in C^{\infty}$ near a regular point in the ``model problem S''.

In this paper, we allow variable coefficients and use a $C^{1,\alpha}$ boundary Harnack principle to show that $\Gamma\in C^{2,\alpha}$ near a regular point. The main difficulty is that the blow-up limit differs along the free boundary $\Gamma$ due to the changing coefficient matrix. This will be discussed soon in the next subsection.

We assume $D=B_{1}$, $M=\mathbb{R}^{n}=\{x_{n+1}=0\}$ and $\psi=0$. Besides, $U$ is assumed to be $x_{n+1}$-even, meaning
\begin{equation*}
U(x_{1},\cdots,x_{n},x_{n+1})=U(x_{1},\cdots,x_{n},-x_{n+1}),
\end{equation*}
and the symmetric matrix $A$, accordingly, needs to be an $x_{n+1}$-odd function in entries $A^{ij},A^{ji}$ for $i\leq n,j=n+1$, and be an $x_{n+1}$-even function in all other entries.

In the discussion in the remaining part of this paper, we assume that $A$ satisfies the following properties. These properties are to be used in the proof of Theorem~\ref{application}, particularly in \eqref{eq. additional explanation of fm being tangential}.
\begin{itemize}
    \item[(a)] $\lambda I\leq A\leq\Lambda I$ and $[A]_{C^{\alpha}}$ (for some $\alpha<1/2$) is bounded,
    \item[(b)] $A^{ij}=A^{ji}=0$ for $i\leq n,j=(n+1)$,
    \item[(c)] $\partial_{i}A$'s are $C^{\alpha}(\mathbb{R}^{n+1})$ for all $i\leq n$, and $\partial_{i}A^{n+1,n+1}\equiv0$ for $i\leq n$.
    \item[(d)] $A$ is even in the $x_{n+1}$ direction.
\end{itemize}
\begin{remark}
    By assuming $\alpha<1/2$, the regularity of the free boundary $\Gamma$ in Theorem~\ref{application} is $C^{2,\alpha}$ for $\alpha<1/2$. The reason why we are assuming $\alpha<1/2$ will be explained in Remark~\ref{rmk. obstruction of alpha}, see also \eqref{eq. obstruction reason}. The author believes that if $F(x)\equiv0$ (or vanishes on $\{x_{n}=0\}$ with some sufficiently large vanishing order) in Theorem~\ref{application}, then the assumption $\alpha<1/2$ can be replaced by $\alpha\in(0,1)$.
\end{remark}
\begin{remark}
    If the assumptions (a)-(d) hold, then we can assume that the solution $U$ is an even function in the $x_{n+1}$-variable. Otherwise, we replace $U$ with
\begin{equation*}
    \bar{U}(x_{1},\cdots,x_{n+1}):=\frac{1}{2}U(x_{1},\cdots,x_{n},x_{n+1})+\frac{1}{2}U(x_{1},\cdots,x_{n},-x_{n+1}).
\end{equation*}
Then $\bar{U}$ still satisfies the same equation, and the free boundary is unchanged.
\end{remark}

Our description of $C^{2,\alpha}$ regularity of the free boundary is as follows:
\begin{theorem}\label{application}
    Assume that the matrix $A$ satisfies the even conditions (a)-(d), $F(x)\in W^{1,\infty}$, and an $x_{n+1}$-even solution $U$ solves the thin obstacle problem
\begin{equation}\label{U in y abuse}
    \left\{\begin{aligned}
        &\mathrm{div}(A\nabla U)=F(x)&\mbox{when }U>0\mbox{ or }x_{n}\neq0,\\
        &\mathrm{div}(A\nabla U)\leq F(x)&\mbox{ in the weak sense}.
    \end{aligned}\right.
\end{equation}
If $0$ is a regular boundary point, then the free boundary $\Gamma\subseteq\mathbb{R}^{n}$ is a $C^{2,\alpha}$ graph near $0$.
\end{theorem}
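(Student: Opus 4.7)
Since the $C^{1,\alpha}$ regularity of $\Gamma$ near a regular point is already known from \cite{GPSVG16,KRS17a,RS17}, we may write locally $\Gamma = \{(x',\gamma(x'),0): x' \in \mathbb{R}^{n-1}\}$ with $\gamma \in C^{1,\alpha}$, $\gamma(0)=0$, $\nabla\gamma(0)=0$. The goal is to bootstrap $\gamma$ to $C^{2,\alpha}$ by applying the $C^{1,\alpha}$ boundary Harnack in slit domains --- the main technical result of this paper --- to the ratios of tangential derivatives of $U$.

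I would first perform the $C^{1,\alpha}$ change of variables
\[
(x_1,\ldots,x_{n+1}) \mapsto (x_1,\ldots,x_{n-1},\,x_n - \gamma(x'),\,x_{n+1})
\]
to straighten $\Gamma$ into the $(n-1)$-plane $\{x_n = x_{n+1} = 0\}$. The pulled-back matrix stays $C^\alpha$, continues to satisfy the parity conditions (b)--(d), and the source stays in $W^{1,\infty}$; the new solution solves a Signorini problem whose coincidence set is the flat slit $\{x_n \leq 0,\,x_{n+1} = 0\}$. I would then open the slit via the Lipschitz angle-halving map $(\rho\cos\theta,\rho\sin\theta) \mapsto (\rho\cos(\theta/2),\rho\sin(\theta/2))$ in the last two coordinates, turning the two-sided slit into the flat boundary of a half-space. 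A direct computation shows that in these combined coordinates the original partial derivatives $u_i := (\partial_i U)\circ \Phi$ (where $\Phi$ is the composition of the two changes of variables) all vanish on the straightened slit and solve a common degenerate elliptic equation --- precisely the non-standard equation whose Schauder theory is developed in the body of this paper. The function $u_n$ plays the role of the comparison solution and is comparable, near the origin, to the distance to the opened slit.

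Next I would apply the $C^{1,\alpha}$ boundary Harnack in slit domains to the pair $(u_i,u_n)$ for each $i<n$: since both functions vanish on the straightened slit and solve the same degenerate equation, the ratio $u_i/u_n$ extends as a $C^{1,\alpha}$ function up to the boundary. As in the $C^{1,\alpha}$-regularity proof of \cite{ACS08}, but one derivative higher, the trace of this ratio along $\Gamma$ equals (up to sign) $\partial_i\gamma$, so each $\partial_i\gamma$ is a $C^{1,\alpha}$ function of $x'$, i.e.\ $\gamma \in C^{2,\alpha}$.

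The main obstacle, to be handled earlier in the paper rather than in this application, is the $C^{1,\alpha}$ boundary Harnack itself. Unlike the constant-coefficient setting of \cite{DS14}, where the explicit harmonic $3/2$-profile serves as a global comparison, the variable coefficients cause the leading blow-up to vary along $\Gamma$, so no fixed explicit model is available. This forces the development of a Schauder theory for a non-standard degenerate elliptic operator with H\"older coefficients in the opened-slit geometry, which is the technical heart of the paper. Granted that Schauder theory, the change-of-variables and boundary-Harnack argument above is essentially mechanical.
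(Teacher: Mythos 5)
Your high-level strategy matches the paper's: straighten $\Gamma$ by the vertical change of variables, apply a $C^{1,\alpha}$ boundary Harnack to the ratios $U_{x_i}/U_{x_n}$, and read off $\partial_i\gamma$ from the trace on $\Gamma$. But there are two concrete places where the proposal diverges from, and in one case misidentifies, what the paper actually does.

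First, the paper does \emph{not} open the slit by the angle-halving map as the primary setting. That step belongs to the classical $C^\alpha$ argument of \cite{ACS08} recalled in the introduction, and it is used internally here only as a device (the $(\xi,\eta)$ coordinate of \eqref{complex cordinate}) for proving the Poincar\'e inequality and the boundary regularity of harmonic functions. The boundary Harnack (Theorem~\ref{HOBH straight}) is stated and proved directly in the slit domain $B_1\setminus S$, and the $C^\alpha$--H\"older control of $u_m/\xi$ is obtained there via Theorem~\ref{Holder estimate} and Proposition~\ref{hopf}.

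Second, and more importantly, you attribute the ``non-standard degenerate elliptic equation'' to the $u_i$ themselves after opening the slit. That is a misidentification, and it is the crux of the argument. In the slit domain the $u_m=\partial_{y_m}U$ solve a \emph{uniformly elliptic} equation \eqref{equationu}; the equation that is degenerate, and for which the entire Schauder theory of Sections~2--3 is built, is the equation satisfied by the \emph{ratio} $w=u_1/u_2$. This is Lemma~\ref{ratio}: $\mathrm{div}(u_2^2A\nabla w)=\ldots$, degenerate precisely because the weight $u_2^2\approx\xi^2$ vanishes on $S$. Without deriving this ratio equation and recognizing its weight, the Schauder estimate (Proposition~\ref{gammaschauder}) has nothing to bite on, and the claim that $w\big|_\Gamma\in C^{1,\alpha}$ does not follow. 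You also do not address the structural hypotheses on the right-hand side ($\vec{f}_m\cdot e_j$ and $\tfrac{\vec{f}_m\cdot e_{n+1}}{x_{n+1}}$ having property $(\mathcal{F}_A)$) that the coordinate-change terms must be shown to satisfy before Theorem~\ref{HOBH straight} applies --- this is Step 1 of the paper's proof, and it hinges on the parity conditions (a)--(d). I would call these genuine gaps rather than a different route: the overall skeleton is right, but the step that makes the machinery engage is missing.
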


We can perform a scaling, replacing $U(x)$ by $R^{-3/2}U(Rx)$ near the origin. This keeps the blow-up of $U(x)$ at the regular boundary invariant, but we can assume $\|D^{i}A\|_{C^{\alpha}}(i\leq n)$ and $[F]_{W^{1,\infty}}$ are small by sending $R\to0$.

Up to a rotation, let us assume that the free boundary $\Gamma$ is a graph in $(x_{1},\cdots,x_{n-1})$ variables near the origin, meaning that there exists $\gamma(x_{1},\cdots,x_{n-1})$ so that
\begin{equation*}
    \Gamma=\{(x_{1},\cdots,x_{n-1},\gamma(x_{1},\cdots,x_{n-1}),0):(x_{1},\cdots,x_{n-1})\in\mathbb{R}^{n-1}\cap B_{1}\}.
\end{equation*}

The strategy is to show $w_{i}=U_{x_{i}}/U_{x_{n}}(i<n)$ is $C^{1,\alpha}$ on $\Gamma$. In fact, by the implicit function, this value represents the gradient of the level set $\mathbb{R}^{n}\cap\{U=h\}$ for each $h>0$ when written as a graph of $x^{T}$. If $w_{i}\Big|_{\Gamma}\in C^{1,\alpha}$, then so do $\partial_{i}\gamma$, meaning that $\Gamma$ is a $C^{2,\alpha}$ graph.

We perform a coordinate change
\begin{equation}\label{ytox abuse}
    x\rightsquigarrow x-\gamma(x_{1},\cdots,x_{n-1})\vec{e}_{n}
\end{equation}
which straightens the boundary $\Gamma$ to $\mathbb{R}^{n-1}$ (it will be made more clear in \eqref{ytox}). Then, we establish a $C^{1,\alpha}$ boundary Harnack principle on the straightened boundary.

We denote the ``straightened-slit'' as $S$, more precisely,
\begin{equation}
    S=\{x=(x_{1},...,x_{n+1}): x_{n}<0,x_{n+1}=0\}.
\end{equation}
It corresponds to the region $\{x_{n}\leq\gamma(x_{1},...,x_{n-1})\}\cap\mathbb{R}^{n}=\{U=0\}\cap\mathbb{R}^{n}$ before the coordinate change \eqref{ytox abuse}.

For each point $(x_{1},\cdots,x_{n+1})$ after the coordinate change, we abbreviate its first $(n-1)$ indices as $x^{T}$, and last two indices $(x_{n},x_{n+1})=x^{\perp}$.

As the blow-up scaling
\begin{equation*}
    U_{r}(x):=\frac{U(rx+x_{0})}{(\fint_{\partial B_{r}(x_{0})}U^{2})^{1/2}}
\end{equation*}
converges as $r\to0$ in $C^{1,\alpha}$ sense to a $3/2$-homogeneous solution like the previously mentioned
\begin{equation*}
    Re((x_{n}+i x_{n+1})^{3/2}),
\end{equation*}
we see that for $i\leq n$, $U_{x_{i}}$ converges to a multiple of a $1/2$-homogeneous solution like
\begin{equation*}
    Re((x_{n}+i x_{n+1})^{1/2})
\end{equation*}
in $C^{\alpha}$ sense near the regular point. In fact, the blow-up limit of $U_{x_{i}}$ is more complicated, in the sense that the expression depends on the coefficient matrix $A$. The dependence is presented below.

\subsection{Homogeneous solutions} Assume that after straightening the boundary $\Gamma$, which is turned into $\mathbb{R}^{n-1}$, we have a corresponding matrix $A$ (will be explicit in \eqref{equationu}). Let $\bar{A}=A(0)$ so that $\bar{A}^{i,n+1}=\bar{A}^{n+1,i}=0$ for all $i\leq n$, then the positive $1/2$-homogeneous solution of
\begin{equation}
    \mathrm{div}(\bar{A}\nabla\bar{u})=0\ \mbox{in}\ \mathbb{R}^{n+1}\setminus S,\quad\bar{u}\Big|_{S}=0
\end{equation}
is given up to a constant multiple by
\begin{equation}\label{diffenent blow up}
\bar{u}_{\bar{A}}=\sqrt{\frac{x_{n}+\sqrt{x_{n}^{2}+\kappa^{2}x_{n+1}^{2}}}{2}},\quad\kappa=\kappa(\bar{A})=(\frac{\bar{A}^{nn}}{\bar{A}^{n+1,n+1}})^{1/2}.
\end{equation}
In particular, when $\bar{A}=\delta^{ij}$, then $\kappa=1$, and we set
\begin{equation}\label{rho and xi}
\rho:=\sqrt{x_{n}^{2}+x_{n+1}^{2}}=|x^{\perp}|,\quad\xi:=\bar{u}_{I}=\sqrt{\frac{x_{n}+\rho}{2}}=Re\Big((x_{n}+i x_{n+1})^{1/2}\Big).
\end{equation}
Simple computation yields that $0\leq\xi\leq\sqrt{\rho}$ and $\ds|\nabla\xi|=\frac{1}{2\sqrt{\rho}}$. We will use these new coordinates quite often.

It suffices to study the special case $\bar{A}=\delta^{ij}$ since homogeneous solutions differ only by a stretching:
\begin{equation*}
    \bar{u}_{\bar{A}}(x_{n},x_{n+1})=\xi(x_{n},\kappa x_{n+1}).
\end{equation*}
It's worthwhile to mention that when $A$ is uniformly elliptic, then $\kappa(A)$ is bounded from above and below when we move the center point elsewhere, and there is a uniform constant $C(\lambda,\Lambda)$ so that for any $x$,
\begin{equation*}
    C(\lambda,\Lambda)^{-1}\leq\frac{\bar{u}_{\bar{A}_{1}}}{\bar{u}_{\bar{A}_{2}}}(x)\leq C(\lambda,\Lambda).
\end{equation*}
However, the ratio $\ds\frac{\bar{u}_{\bar{A}_{1}}}{\bar{u}_{\bar{A}_{2}}}(x)$ is not continuous when $\kappa(A_{1})\neq\kappa(A_{2})$. It depends only on the angle $arg(x_{n}+i x_{n+1})$, so when $|x^{\perp}|\to0$, the oscillation of $\ds\frac{\bar{u}_{\bar{A}_{1}}}{\bar{u}_{\bar{A}_{2}}}(x)$ remains large. Besides, one can check that the ratio $\ds\frac{\bar{u}_{\bar{A}_{1}}}{\bar{u}_{\bar{A}_{2}}}(x)$ is Lipschitz with respect to the angle $arg(x_{n}+i x_{n+1})$.

\subsection{\texorpdfstring{$C^{1,\alpha}$}{Lg} boundary Harnack principle} In this paper, we will sometimes use a path distance to describe the metric space in $\mathbb{R}^{n+1}\setminus S$, that is
\begin{equation}\label{eq. path distance}
    dist(x,y):=\inf\{|\gamma|:\gamma(0)=x,\gamma(1)=y,\gamma(t)\in\mathbb{R}^{n+1}\setminus S\}.
\end{equation}
We describe a kind of H\"older continuity which will be used quite often in this paper. We say a scalar/vector valued function $f(x)$ satisfies property $(\mathcal{F}_{A})$ in $B_{R}\setminus S$ if
\begin{itemize}
    \item[$(\mathcal{F}_{A})$] There exists a constant $C$ and $h(x)$ so that $f(x)=\bar{u}_{A(x^{T})}(x)h(x)$ and
    \begin{equation*}
        \|h\|_{C^{\alpha}(B_{R}\setminus S)}\leq C,\quad\mbox{w.r.t. path distance}\ dist(\cdot,\cdot)\mbox{ as in \eqref{eq. path distance}}.
    \end{equation*}
\end{itemize}
Precisely, $\|h\|_{C^{\alpha}(B_{R}\setminus S)}$ equals the following value:
\begin{equation*}
    \|h\|_{C^{\alpha}(B_{R}\setminus S)}=\sup_{x\in B_{R}\setminus S}|h(x)|+\sup_{x,y\in B_{R}\setminus S}\frac{|h(x)-h(y)|}{dist(x,y)^{\alpha}}.
\end{equation*}
We can also simply write $(\mathcal{F}_{A})$ as $(\mathcal{F})$, provided that the matrix $A$ is known on $\mathbb{R}^{n-1}$.
\begin{remark}
    Besides, if $f(x)$ is $x_{n+1}$-even, then there's no distinction between using the path distance $dist(\cdot,\cdot)$ and using $|x-y|$ to describe regularity. In fact, for an $x_{n+1}$-even function $f(x)$, $\|f\|_{C^{\alpha}(B_{R}\setminus S)}=\|f\|_{C^{\alpha}(B_{R}\cap\{x_{n+1}\geq0\})}$ with respect to the path distance \eqref{eq. path distance}, and $dist(x,y)=|x-y|$ for $x,y\in B_{R}\cap\{x_{n+1}\geq0\}$.
\end{remark}

The $C^{1,\alpha}$ boundary Harnack principle is as follows.
\begin{theorem}\label{HOBH straight}
    Assume that $\lambda I\leq A\leq\Lambda I$ and $A\in C^{\alpha}$($\alpha<1/2$). $\phi_{1},\phi_{2}$ are $L^{\infty}$ functions. $u_{1},u_{2}$ are defined on $B_{1}$, and they satisfy
    \begin{equation}\label{standardx}
        \mathrm{div}(A\cdot\nabla u_{i})=\mathrm{div}(\vec{f}_{i})+\phi_{i}\ \mbox{in}\ B_{1}\setminus S,\quad u_{i}\Big|_{B_{1}'\cap S}=0.
    \end{equation}
    We also assume $u_{2}/\xi\geq c_{0}>0$ in $B_{1}$. If $\vec{f}_{1}$ and $\vec{f}_{2}$ both have property $(\mathcal{F}_{A})$ in $B_{1}$ with $\vec{f}_{i}\cdot\vec{e}_{n+1}\equiv0$, then the ratio $\ds w=\frac{u_{1}}{u_{2}}$ can be continuously extended to $\mathbb{R}^{n-1}$, and $w\Big|_{\mathbb{R}^{n-1}\cap B_{1/2}}$ is $C^{1,\alpha}$. Besides, $u_{1}$ and $u_{2}$ both have property $(\mathcal{F}_{A})$ in $B_{1/2}$.
\end{theorem}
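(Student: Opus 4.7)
The plan is to handle the two conclusions separately. First I would establish property $(\mathcal{F}_A)$ for $u_1,u_2$ in $B_{1/2}$, which is itself a H\"older upgrade of boundary Harnack. For every edge point $x_0\in B_{1/2}'\cap\mathbb{R}^{n-1}$ the model solution $\bar u_{A(x_0^T)}(x-x_0)$ solves the \emph{frozen-coefficient} equation $\mathrm{div}(A(x_0^T)\nabla\cdot)=0$ in $\mathbb{R}^{n+1}\setminus S$ with zero trace on $S$, so I would compare $u_i$ with appropriate multiples of $\bar u_{A(x_0^T)}$ in a dyadic oscillation-decay iteration at scales $r_k=2^{-k}$, obtaining
\[
\mathrm{osc}_{B_{r_k}(x_0)\setminus S}\bigl(u_i/\bar u_{A(x_0^T)}-c_i^{(k)}(x_0)\bigr)\leq C r_k^{\alpha}.
\]
At each scale the frozen-coefficient error contributes $r_k^{\alpha}$ because $A\in C^{\alpha}$, and the hypothesized $(\mathcal{F}_A)$-structure of $\vec f_i$ together with $\phi_i\in L^\infty$ controls the inhomogeneity. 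Summing over $k$ and letting $x_0$ vary produces the factorization $u_i=\bar u_{A(x^T)}h_i$ with $h_i\in C^{\alpha}$ in the path-distance metric, which is exactly $(\mathcal{F}_A)$.

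To upgrade this $C^{\alpha}$ information to $C^{1,\alpha}$ regularity of $w=u_1/u_2$ along the edge, I would pass to the degenerate elliptic equation for $w$. Using the symmetry of $A$ a direct computation gives $u_2^2 A\nabla w=u_2A\nabla u_1-u_1A\nabla u_2$, hence
\[
\mathrm{div}\bigl(u_2^2\,A\,\nabla w\bigr)=u_2\bigl(\mathrm{div}\vec f_1+\phi_1\bigr)-u_1\bigl(\mathrm{div}\vec f_2+\phi_2\bigr)
\]
distributionally in $B_1\setminus S$. By Step~1, $u_2^2\asymp\bar u_{A(x^T)}^2\asymp(x_n+\rho)/2$, a weight vanishing to first order on the slit. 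The natural device to remove the slit is the conformal-in-$x^{\perp}$ square-root map $(x_n+ix_{n+1})=(y_n+iy_{n+1})^2$, which identifies $\mathbb{R}^{n+1}\setminus S$ with the half-space $\{y_n>0\}$, the slit $S$ with the hyperplane $\{y_n=0\}$, and the edge $\mathbb{R}^{n-1}$ with the codimension-$2$ set $\{y_n=y_{n+1}=0\}$. In these coordinates the equation for $w$ becomes
\[
\mathrm{div}\bigl(y_n^2\,\tilde A\,\nabla w\bigr)=y_n\,G,
\]
with a uniformly elliptic symmetric $\tilde A\in C^{\alpha}$ and a right-hand side $G\in C^{\alpha}$ up to $\{y_n=0\}$; the splitting of the hypothesis into the $(\mathcal{F}_A)$-behavior of $\vec f_i\cdot e_j$ ($j\leq n$) and of $\vec f_i\cdot e_{n+1}/x_{n+1}$ is exactly what makes $G$ H\"older (rather than merely bounded) up to the boundary.

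The core of the argument is then a $C^{1,\alpha}$-Schauder estimate for this non-standard degenerate operator up to $\{y_n=0\}$, using as input the $C^{\alpha}$-regularity of $w$ from Step~1. I would carry this out by first classifying the bounded $1$-homogeneous solutions of the model operator $\mathrm{div}(y_n^2\nabla\cdot)=0$ in $\{y_n>0\}$, proving a Liouville-type theorem, and then running a blow-up / improvement-of-flatness iteration at each point of $\{y_n=0\}$. Restricting the resulting $C^{1,\alpha}$ estimate on $\{y_n=0\}$ to the subset $\{y_n=y_{n+1}=0\}$ recovers $w|_{\mathbb{R}^{n-1}}\in C^{1,\alpha}$. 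The main obstacle is precisely this Schauder estimate: the weight $y_n^2$ lies in the Muckenhoupt $A_2$ class but the Fabes--Kenig--Serapioni machinery only delivers $C^{\alpha}$; passing to $C^{1,\alpha}$ requires analysis tailored to the specific quadratic weight $y_n^2$ and to the precise family of homogeneous model solutions, and is where the non-standard Schauder theory advertised in the abstract enters.
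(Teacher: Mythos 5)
Your overall program — establish property $(\mathcal{F}_A)$ for $u_1,u_2$, pass to the degenerate equation for $w=u_1/u_2$, and run a Campanato/improvement-of-flatness Schauder iteration for that degenerate operator — matches the paper's strategy, and your observation about why the hypothesis on $\vec f_i\cdot e_{n+1}/x_{n+1}$ is what renders the inhomogeneity H\"older (rather than merely bounded) is exactly right. However, the central step of your Step~3 fails: the square-root map $(x_n+ix_{n+1})=(y_n+iy_{n+1})^2$ does \emph{not} turn $\mathrm{div}_x(\xi^2 A\nabla_x w)$ into $\mathrm{div}_y(y_n^2\tilde A\nabla_y w)$ with a uniformly elliptic $\tilde A$. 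The map is conformal only in $x^\perp$, and the tangential directions $x^T=y^T$ are untouched; writing the conjugated bilinear form $\int \xi^2(\nabla_y\varphi)^T(JB^{-1}AB^{-T})\nabla_y w\,dy$ with $J=\det(\partial x/\partial y)=4\rho$, one finds that the $T$-$T$ block of $JB^{-1}AB^{-T}$ is $4\rho\,A^{TT}$, while the $\perp$-$\perp$ block is of order one. Thus the effective coefficient is $\mathrm{diag}(4\rho\,y_n^2 I_{n-1},\,y_n^2 I_2)$ up to $O(1)$ factors, which degenerates anisotropically at the very edge $\{y_n=y_{n+1}=0\}$ where you need the estimate; it is not a scalar weight times a uniformly elliptic matrix in those coordinates. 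The claim that $y_n^2$ is an $A_2$ weight on $\mathbb{R}^{n+1}$ is also false ($|y_n|^a\in A_2$ only for $-1<a<1$; $\int_{B}y_n^{-2}\,dy=\infty$).

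The paper sidesteps the coordinate change entirely and keeps the structure ``scalar weight $\xi^2$ times uniformly elliptic $A$'' in the original $x$-variables, explicitly acknowledging that $\xi^2$ is not $A_2$. It builds a bespoke weighted Sobolev space $H^1(B_r\setminus S,\xi^2\,dx)$, proves a tailored Poincar\'e inequality (Proposition~\ref{thin poincare}) and Caccioppoli estimates (Propositions~\ref{caccioppoli}, \ref{dirichlet}), solves the Dirichlet problem by Lax--Milgram in that space, and then runs the Campanato iteration using harmonic replacements $h_k$ with $\mathrm{div}(\xi^2\nabla h_k)=0$, whose regularity comes from the substitution $H=\xi h$ (so $\Delta H=0$ in $B_1\setminus S$, $H|_S=0$) and the smoothness of $H$ in the $(x^T,\xi,\eta)$-coordinate (Lemmas~\ref{constant coefficient}, \ref{harmonic functions}). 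Two further points you gloss over but the paper needs: (i) besides $(\mathcal{F}_A)$ for $u_i$, one needs $(\mathcal{F}_A)$ for $\rho\,\partial_j u_i$ (and for $x_{n+1}\rho\,\partial_{n+1} u_i$) to absorb the term $\vec f_2\cdot\nabla u_1-\vec f_1\cdot\nabla u_2$ into the right-hand side of the degenerate equation in the form accepted by Proposition~\ref{gammaschauder} (see Remark~\ref{absorb remark}); (ii) the conclusion $w|_{\mathbb{R}^{n-1}}\in C^{1,\alpha}$ requires patching the pointwise average-Campanato estimates centered at different edge points, which is done via the equivalent characterizations $(\mathcal F_1)$--$(\mathcal F_3)$ and Step~3 of the paper's proof.
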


In the next subsection, in order to prove the $C^{1,\alpha}$ boundary Harnack principle, we follow a similar method as in \cite{TTV22}. We derive the equation satisfied by the ratio $w$, which can be turned into a degenerate equation. The $C^{1,\alpha}$ boundary Harnack principle is then a consequence of a Schauder estimate of the ratio $w$.
\subsection{A degenerate equation}
By direct computation, we obtain the equation satisfied by the ratio.
\begin{lemma}\label{ratio}
    If $u_{1}$ and $u_{2}$ satisfy $\mathrm{div}(A\cdot\nabla u_{i})=\mathrm{div}(\vec{f}_{i})+\phi_{i}$, then the ratio $\ds w=\frac{u_{1}}{u_{2}}$ satisfies
    \begin{equation}\label{eq. ratio equation}
        \mathrm{div}(u_{2}^{2}A\nabla w)=\mathrm{div}(u_{2}\vec{f}_{1}-u_{1}\vec{f_{2}})+(\vec{f}_{2}\cdot\nabla u_{1}-\vec{f}_{1}\cdot\nabla u_{2})+(u_{2}\phi_{1}-u_{1}\phi_{2})
    \end{equation}
    as long as $A$ is symmetric.
\end{lemma}
\begin{proof}
    As $u_{1}=u_{2}w$, by expanding $\ds u_{2}\mathrm{div}\Big(A\nabla(u_{2}w)\Big)=u_{2}\mathrm{div}(\vec{f}_{1})+u_{2}\phi_{1}$, we have
    \begin{equation*}
        u_{2}^{2}\mathrm{div}(A\nabla w)+2u_{2}A(\nabla u_{2},\nabla w)=u_{2}[\mathrm{div}(\vec{f}_{1})+\phi_{1}]-u_{2}w\cdot \mathrm{div}(A\nabla u_{2}).
    \end{equation*}
    Its left-hand side is
    \begin{equation*}
        u_{2}^{2}\mathrm{div}(A\nabla w)+A(\nabla u_{2}^{2},\nabla w)=\mathrm{div}(u_{2}^{2}A\nabla w),
    \end{equation*}
    and right-hand side is
    \begin{align*}
        &u_{2}[\mathrm{div}(\vec{f}_{1})+\phi_{1}]-u_{1}[\mathrm{div}(\vec{f}_{2})+\phi_{2}]\\
        =&\mathrm{div}(u_{2}\vec{f}_{1}-u_{1}\vec{f_{2}})+(\vec{f}_{2}\cdot\nabla u_{1}-\vec{f}_{1}\cdot\nabla u_{2})+u_{2}\phi_{1}-u_{1}\phi_{2}.
    \end{align*}
\end{proof}
In the context of $C^{1,\alpha}$ boundary Harnack principle, we can also absorb the term $(\vec{f}_{2}\cdot\nabla u_{1}-\vec{f}_{1}\cdot\nabla u_{2})$ into the divergence term, see Remark~\ref{absorb remark} at the end of section~\ref{only average}.

Assume that $A(0)=\delta^{ij}$ for simplicity, meaning that the blow-up of $u_{2}$ at the origin is $\xi$ (defined in \eqref{rho and xi}). We consider a model degenerate equation:
\begin{equation}\label{main}
    \mathrm{div}(\xi^{2}A\cdot\nabla w)=\mathrm{div}(\xi^{2}\vec{f})+\xi^{2}g\quad\mbox{in}\ B_{1}\setminus S.
\end{equation}
Other terms can also appear on the right-hand side, and we give some explanation in Remark~\ref{absorb remark} and Remark~\ref{xiphi on the RHS}.

The assumption on $w$ is that $w\in L^{2}(B_{1}\setminus S,\rho^{-1}dx)\cap H^{1}_{loc}$, and later we will obtain a Schauder estimate for \eqref{main}. To be precise about the notation, $w\in L^{2}(B_{1}\setminus S,\rho^{-1}dx)$ means that for $\rho$ defined in \eqref{rho and xi},
\begin{equation*}
    \int_{B_{1}\setminus S}\frac{w^{2}}{\rho}dx<\infty.
\end{equation*}
Besides, we say $w\in H^{1}_{loc}$ if for any point $x\in B_{1}\setminus S$, there exists a radius $\epsilon$ such that $B_{\epsilon}(x)\subseteq B_{1}\setminus S$ and $w\in H^{1}(B_{\epsilon}(x))$.

To describe its solution, we realize that if $w$ is an $x_{n+1}$-even, the linearization of $w$ is a non-smooth ``polynomial'' in the variable $(x_{1},...,x_{n},\rho)$. We make the following definition.
\begin{definition}\label{def. "polynomials"}
We refer to ``polynomials'' as elements in the ring $\mathbb{R}[x_{1},...,x_{n},\rho]$. A ``polynomial'' is called linear, if it is in the form
\begin{equation*}
    L=c_{0}+c_{1}x_{1}+\cdots+c_{n}x_{n}+c_{\rho}\rho.
\end{equation*}
\end{definition}
We also define ``pointwise'' $C^{\alpha}$ vector fields, which are the gradients of some linear ``polynomial'' with $O(|x|^{\alpha})$ error.
\begin{definition}\label{def. 1.6}
    A vector field $\vec{f}$ is called ``pointwise'' $C^{\alpha}$ at the origin, if it can be written as $\vec{f}=\vec{f}_{0}+\vec{f}_{\alpha}$, where
\begin{equation*}
\vec{f}_{0}=c_{1}e_{1}+,\cdots+c_{n}\vec{e}_{n}+c_{\rho}\nabla\rho,\quad|\vec{f}_{\alpha}|\leq c_{\alpha}|x|^{\alpha},
\end{equation*}
and we denote its norm as $\|\vec{f}\|_{C^{\alpha}(B_{1},0)}:=|c_{1}|+\cdots+|c_{n}|+|c_{\rho}|+c_{\alpha}$.
\end{definition}
We prove the following pointwise Schauder estimate:
\begin{proposition}\label{gammaschauder}
Assume that $A^{ij}(0)=\delta^{ij}$, $[A^{ij}]_{C^{\alpha}(B_{1}^{+})}\leq\varepsilon_{0}$ and $w\in L^{2}(B_{1}\setminus S,\rho^{-1}dx)\cap H^{1}_{loc}$ is an $x_{n+1}$-even solution of \eqref{main}. Let $\ds p=\frac{n+3}{1-\alpha}$. Given that $\varepsilon_{0}$ is small, there exist a linear ``polynomial'' $L$ and $C=C(n,\alpha)$ such that
\begin{align}\label{average C1alpha}
    &\|L\|_{C^{0,1}(B_{1}\setminus S)}^{2}+\frac{1}{r^{n+2+2\alpha}}\int_{B_{r}\setminus S}\frac{|w-L|^{2}}{\rho}dx\notag\\
    \leq&C\Big\{\|\vec{f}\|_{C^{\alpha}(B_{1},0)}^{2}+\|g\|_{L^{p}(B_{1}\setminus S,\rho\xi^{2}dx)}^{2}+\|w\|_{L^{2}(B_{1}\setminus S,\rho^{-1}dx)}^{2}\Big\}.
\end{align}
\end{proposition}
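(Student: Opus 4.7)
The plan is the classical three-step Schauder scheme for degenerate elliptic equations, following the approach in \cite{TTV22}: classify the linear ``polynomial'' solutions of the frozen homogeneous model, establish an improvement-of-flatness dichotomy by compactness, and iterate dyadically.

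\textbf{Classification of linear solutions (Step 1).} First I identify the linear ``polynomial'' solutions of the constant-coefficient homogeneous model $\mathrm{div}(\xi^{2}\nabla L)=0$. By Lemma~\ref{ratio} applied with $u_{2}=\xi$, such an $L$ must satisfy that $L\xi$ is an $x_{n+1}$-even harmonic function vanishing on $S$. The $x_{n+1}$-even harmonic basis of slit-harmonic functions through homogeneity $3/2$ consists of $\xi$, $x_{i}\xi$ $(i<n)$, and $\mathrm{Re}((x_{n}+ix_{n+1})^{3/2})$; dividing by $\xi$ produces the $(n+1)$-dimensional subspace of linear ``polynomials'' spanned by $1$, $x_{i}$ $(i<n)$ and $(2x_{n}-\rho)$, the missing direction being $x_{n}+\rho=2\xi^{2}$. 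A direct computation using $\partial_{n}(\xi^{2})=(x_{n}+\rho)/(2\rho)$ and $\mathrm{div}(\xi^{2}\nabla\rho)=(x_{n}+\rho)/\rho$ confirms that only this subspace solves the model. Any $x_{n+1}$-even weak solution $v$ of $\mathrm{div}(\xi^{2}\nabla v)=0$ in $B_{1}\setminus S$ then admits a second-order expansion $v(x)=L(x)+O(|x|^{2})$ with $L$ in this subspace, obtained from the classical Taylor expansion of the harmonic function $v\xi$ at the slit.

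\textbf{Improvement of flatness and dyadic iteration (Steps 2--3).} By compactness and contradiction, I prove the dichotomy: there exist $\theta\in(0,1/2)$ and $\varepsilon_{0}>0$ such that whenever $[A]_{C^{\alpha}}$, $\|\vec{f}\|_{C^{\alpha}(B_{1},0)}$, $\|g\|_{L^{p}(B_{1},\rho\xi^{2}dx)}$ and $\|w\|_{L^{2}(B_{1},\rho^{-1}dx)}$ are all $\le 1$ and $A(0)=I$, some linear $L$ satisfies
\[
\Big(\theta^{-(n+2+2\alpha)}\int_{B_{\theta}\setminus S}\frac{|w-L|^{2}}{\rho}\,dx\Big)^{1/2}\le\theta^{\alpha}.
\]
Negating the conclusion produces a sequence $(w_{k},A_{k},\vec{f}_{k},g_{k})$ with $A_{k}\to I$ in $C^{\alpha}$ and $\vec{f}_{k},g_{k}\to 0$ in their respective norms; weighted Caccioppoli estimates and the Fabes--Kenig--Serapioni theory for the $A_{2}$-weight $\xi^{2}$ furnish a weak and $L^{2}(\rho^{-1}dx)$-strong limit $w_{\infty}$ solving the frozen homogeneous model, and Step~1 provides an approximating $L_{\infty}$ contradicting the hypothesis. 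Then I rescale $w^{(k+1)}(y)=\theta^{-(1+\alpha)}(w^{(k)}(\theta y)-L^{(k)}(\theta y))$ and iterate; the ``pointwise $C^{\alpha}$ at $0$'' structure of $\vec{f}$ and the $L^{p}$-integrability of $g$ against $\rho\xi^{2}dx$ (with the precise exponent $p=(n+3)/(1-\alpha)$) keep the rescaled data small in the same norms. The $L^{(k)}$ form a Cauchy sequence whose telescoping coefficient bounds give the Lipschitz control on the limit $L$, and the dyadic excesses assemble into \eqref{average C1alpha}.

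\textbf{Main obstacle.} The compactness step is where all the weight theory is needed. One must justify passage to the limit in the weak formulation of the degenerate equation with weight $\xi^{2}$---specifically the weak convergence $\xi^{2}A_{k}\nabla w_{k}\rightharpoonup\xi^{2}\nabla w_{\infty}$ and the vanishing of $\int\xi^{2}\vec{f}_{k}\cdot\nabla\phi$ and $\int\xi^{2}g_{k}\phi$ against admissible test functions---through uniform weighted $H^{1}$ bounds and a weighted Rellich compactness. The exponent $p=(n+3)/(1-\alpha)$ is chosen precisely so that a weighted Morrey-type embedding survives the rescaling and the iteration remains scale-invariant. A secondary subtlety is that the $x_{n+1}$-evenness hypothesis on $w$ is essential in Step~1: without it, extra $\mathrm{Im}((x_{n}+ix_{n+1})^{k/2})$ modes would appear and the subspace of linear ``polynomial'' solutions would have to be enlarged accordingly.
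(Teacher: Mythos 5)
Your classification of the linear ``polynomial'' solutions in Step~1 is correct and a nice observation: $\mathrm{div}(\xi^{2}\nabla L)=0$ is equivalent to $\Delta(\xi L)=0$, and since $\xi(2x_{n}-\rho)=\mathrm{Re}\big((x_{n}+ix_{n+1})^{3/2}\big)$ while $\xi(x_n+\rho)=2\xi^3$ is not harmonic, the admissible linear solutions form the $(n+1)$-dimensional subspace you describe. The paper does not carry out this classification explicitly; instead it simply takes $l_{k}$ to be the linearization of the harmonic replacement $h_{k}$ via Lemma~\ref{constant coefficient}, and your route is a reasonable variant.

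However, your Steps~2--3 rest on a claim that is false and is in fact the central difficulty the paper was written to overcome. You invoke ``the Fabes--Kenig--Serapioni theory for the $A_{2}$-weight $\xi^{2}$'' to supply weighted Rellich compactness and passage to the limit in the weak formulation. The weight $\xi^{2}=(x_{n}+\rho)/2$ is \emph{not} an $A_{2}$-Muckenhoupt weight: near an interior slit point, say $x^{\perp}\approx(-1,0)$, one has $\xi^{2}\approx x_{n+1}^{2}/2$, and $\int_{B_{r}}\xi^{-2}\,dx=\infty$, so the double-averaging condition fails outright. The paper states this explicitly in the introduction and presents it as the reason the result is non-trivial. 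Consequently the off-the-shelf weighted Sobolev compactness you appeal to is unavailable, and the contradiction-compactness argument as you have set it up does not close. A compactness route is not impossible in principle, but one would first have to prove from scratch a weighted Rellich theorem for this specific non-$A_{2}$ slit weight. The paper avoids this entirely: it builds the required energy theory by hand (the slit Poincar\'e inequality of Proposition~\ref{thin poincare}, the Caccioppoli estimates of Propositions~\ref{caccioppoli} and~\ref{dirichlet}, and existence via Lax--Milgram), and then runs a direct harmonic-replacement Campanato iteration through Lemmas~\ref{harmonic replacement} and~\ref{constant coefficient}, with no blow-up or compactness step anywhere. You should either supply and prove the needed weighted compactness for $\xi^{2}\,dx$ in $B_{1}\setminus S$, or replace Steps~2--3 with a direct perturbative iteration using the harmonic replacement $h_{k}$ and the bound $\|h_{k}-w_{k}\|$ controlled by $\varepsilon_{0}r^{\alpha}$, as in the paper.
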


We can call such a description of $w$ as $C^{1,\alpha}$ in average sense. If $w$ is $C^{1,\alpha}$ in average sense everywhere on $\mathbb{R}^{n-1}$, then its restriction on $\mathbb{R}^{n-1}$ is $C^{1,\alpha}$ in the classical sense.

The $L^{p}$ assumption on $g$ can be weakened, since it will be used only in \eqref{g actual} during the proof, and we just need to assume
\begin{equation}\label{g weaken}
    \frac{1}{r^{(n+2+2\alpha)}}\int_{B_{r}\setminus S}\rho\xi^{4}g^{2}dx\leq C,\quad\forall r\leq1.
\end{equation}

The Schauder estimate for \eqref{main} would be trivial, if the weight $\lambda=\xi^{2}$ were an $A_{2}$-Muckenhoupt weight (see \cite{Fabes2,Fabes3,Fabes1}), which satisfies the condition that for any ball $B$,
\begin{equation*}
    (\frac{1}{|B|}\int_{B}\lambda)\cdot(\frac{1}{|B|}\int_{B}\lambda^{-1})\leq C.
\end{equation*}
Unfortunately, one can verify that the weight $\lambda=\xi^{2}$ is not $A_{2}$-Muckenhoupt. Nevertheless, inspired by \cite{DP23a,DP23b}, we derive the Schauder estimate by working with a weighted Sobolev space.

In order to apply Proposition~\ref{gammaschauder} to the equation in Lemma~\ref{ratio}, we need to show that $u_{2}/\xi$ is $C^{\alpha}$ at the origin, and has a positive lower bound. Therefore, we use a similar method to study a uniform equation
\begin{equation}\label{uniform}
    \mathrm{div}(A\nabla u)=\mathrm{div}(\frac{\vec{f}}{\sqrt{\rho}})\ \mbox{in}\ B_{1}\setminus S,\quad u\Big|_{S}=0
\end{equation}
in trace sense and $\vec{f}=0$ on $\mathbb{R}^{n-1}$. In Theorem~\ref{Holder estimate} we will show that $u$ will have property $(\mathcal{F}_{A})$ if $\vec{f}\in C^{\alpha}$. We also use it to prove a Hopf-type result in Proposition~\ref{hopf}.
\subsection{More comments}
We see from \eqref{diffenent blow up} that different matrices $A$ correspond to different homogeneous solutions $\bar{u}_{A}$, the vertical coordinate change \eqref{ytox abuse} is not suitable in proving $\Gamma\in C^{3,\alpha},\cdots,C^{\omega}$.

If there is a coordinate change different from \eqref{ytox abuse}, so that $\kappa(A)$ is constant along $\mathbb{R}^{n-1}$, then we can take tangential derivatives of $w_{i}$ and obtain higher order regularity of $w_{i}$, perhaps also obtain the analyticity of $\Gamma$ like in \cite{KPS15}\cite{KRS17b} by a bootstrap argument.

We consider that if $A=A_{D}$ or equivalently $A_{O}=0$, and if $A^{n+1,n+1}$ is a constant, then a geodesic flow that starts from $\Gamma$ and its normal vector $\nu\in\mathbb{R}^{n}$ will be the desired coordinate change. To be precise, at each point $x\in\Gamma$, we solve the ODE that
\begin{equation*}
    \nabla_{c'}c'=0,\quad c(0)=x,\quad c'(0)=\nu(x).
\end{equation*}
This locally parameterizes the hyperplane $x_{n+1}=0$ as $\Gamma\times(-\epsilon,\epsilon)$. We then write the equation \eqref{general Signorini} in this new coordinate.

Besides, in order to prove $\Gamma\in C^{\omega}$, we also need to improve Proposition~\ref{gammaschauder}, so that $C^{1,\alpha}$ estimate is in the classical sense rather than in the average sense.

The coordinate change obtained by geodesic flow will be $C^{1,\alpha}$ only when we assume $\Gamma\in C^{2,\alpha}$, so in this paper we still have to use the vertical coordinate change. It seems to be a common phenomenon (like \cite{DS14} and \cite{DS15}) that the proof of $C^{2,\alpha}$ regularity of the free boundary is different from the proof of higher regularity.

\section{A weighted Sobolev space}
In this section we define a weighted $H^{1}$ space, and establish the corresponding weighed energy estimate.
\subsection{Poincar\'e inequality} In this sub-section, we establish a suitable Poincar\'e inequality in Proposition~\ref{thin poincare}. In this paper, we need to introduce a complex coordinate. For each $x=(x^{T},x^{\perp})$ with $x^{\perp}=(x_{n},x_{n+1})$, we write
\begin{equation}\label{complex cordinate}
    \xi+i\eta=(x_{n}+ix_{n+1})^{1/2},
\end{equation}
where $\xi$ here is the same as \eqref{rho and xi}. In the $(x_{1},\cdots,x_{n-1},\xi,\eta)$-coordinate, we first fix the first $(n-1)$ coordinates $(x_{1},\cdots,x_{n-1})$ and obtain a Poincar\'e estimate for the $(\xi,\eta)$-coordinates.
\begin{lemma}
    Assume that $w(\xi,\eta)\in C^{1}_{loc}$ in the two-dimensional half space $\{(\xi,\eta):\xi>0\}$. If $w\Big|_{|x|>r}=0$ for some $r$ (equivalently, $w\Big|_{\xi^{2}+\eta^{2}>r}=0$), then
    \begin{equation}\label{regular poincare}
\int_{\{\xi>0\}}w^{2}d\xi d\eta\leq4\int_{\{\xi>0\}}\xi^{2}|\nabla w|^{2}d\xi d\eta.
\end{equation}
\end{lemma}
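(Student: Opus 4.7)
The plan is to reduce to a one-dimensional Hardy-type inequality by slicing. For each fixed $\eta$, I aim to show
\[
\int_0^\infty w(\xi,\eta)^2\, d\xi \;\leq\; 4\int_0^\infty \xi^2 w_\xi(\xi,\eta)^2\, d\xi,
\]
and then integrate in $\eta$ and bound $w_\xi^2$ by $|\nabla w|^2$ to obtain \eqref{regular poincare}. The one-dimensional inequality will follow from the integration by parts
\[
\int_0^\infty w^2\, d\xi \;=\; \bigl[\xi w^2\bigr]_0^\infty - 2\int_0^\infty \xi w w_\xi\, d\xi,
\]
combined with Cauchy--Schwarz applied to the right-hand integral and a division by $(\int w^2)^{1/2}$.

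The support assumption kills the boundary term at infinity: recall that $|x|=\rho=\xi^2+\eta^2$ in the $(\xi,\eta)$ chart, so $w$ is supported in the disk $\{\xi^2+\eta^2\leq r\}$ and $w(\xi,\eta)$ vanishes for $\xi$ large. The genuine difficulty is the boundary term at $\xi=0^+$, which is where the main obstacle lies: $w$ is only assumed $C^1_{\mathrm{loc}}$ in the \emph{open} half-plane $\{\xi>0\}$ and may a priori blow up as $\xi\to 0^+$.

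To dispose of this boundary contribution, I would first discard the case in which the right-hand side of \eqref{regular poincare} is infinite. Assuming then that $\int\int \xi^2|\nabla w|^2\,d\xi\,d\eta<\infty$, Fubini gives $\int_0^\infty \xi^2 w_\xi^2\,d\xi<\infty$ for a.e.\ $\eta$, and similarly one can assume $\int_0^\infty w^2\,d\xi<\infty$ for the same a.e.\ $\eta$ (otherwise the two-sided Poincar\'e inequality one is trying to prove is vacuously finite on the right side and infinite on the left, which one also handles by approximation or by the same argument using the absolutely continuous fact below). For such an $\eta$, set $g(\xi):=\xi w(\xi,\eta)^2$; then
\[
g'(\xi) \;=\; w^2 + 2\xi w w_\xi,
\]
and by Cauchy--Schwarz $|g'|\leq 2w^2+\xi^2 w_\xi^2$ is integrable on $(0,1)$. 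Hence $g$ is absolutely continuous on $[0,1]$ and admits a limit $g(0^+)$. Since $\int_0^1 g(\xi)/\xi\,d\xi = \int_0^1 w^2\,d\xi<\infty$ while $1/\xi$ is not integrable near $0$, one must have $g(0^+)=0$.

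With both boundary terms eliminated, the integration by parts yields $\int_0^\infty w^2\,d\xi=-2\int_0^\infty \xi w w_\xi\,d\xi$; Cauchy--Schwarz then gives
\[
\int_0^\infty w^2\,d\xi \;\leq\; 2\Bigl(\int_0^\infty w^2\,d\xi\Bigr)^{\!1/2}\Bigl(\int_0^\infty \xi^2 w_\xi^2\,d\xi\Bigr)^{\!1/2},
\]
and dividing and squaring recovers the slicewise inequality. Integrating in $\eta$ and using $w_\xi^2\leq |\nabla w|^2$ completes the proof.
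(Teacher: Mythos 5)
Your route is genuinely different from the paper's: you go through integration by parts on each $\eta$-slice, whereas the paper proves a $W^{1,1}$ Hardy-type bound $\int_0^\infty |w|\,d\xi\le\int_0^\infty \xi|\partial_\xi w|\,d\xi$ by writing $|w(\xi,\eta)|\le\int_\xi^\infty|\partial_\xi w(h,\eta)|\,dh$ (the compact support kills things at infinity, and there is no boundary term at $\xi=0$ to worry about), then applies that estimate to $w^2$ and uses Cauchy--Schwarz. The paper's route thus never has to confront behaviour of $w$ at $\xi=0^+$, which is exactly where your proof runs into trouble.

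The trouble is a circularity in your treatment of the boundary term $g(0^+)=\lim_{\xi\to0^+}\xi\,w^2$. You conclude $g(0^+)=0$ from the finiteness of $\int_0^1 w^2\,d\xi$; but that finiteness, on a slice where only $\int_0^\infty\xi^2 w_\xi^2\,d\xi$ is assumed finite, is precisely the content of the inequality you are trying to prove. The parenthetical dismissal (``vacuously finite on the right side and infinite on the left, $\ldots$ handled by approximation'') does not resolve this: if the left side were infinite while the right side is finite, the inequality would be \emph{false}, not vacuous, and the approximation argument is not spelled out. As written, the step ``for such an $\eta$, assume $\int_0^\infty w^2\,d\xi<\infty$'' is a genuine gap.

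The gap is fixable inside your own framework, and more easily than you attempted: note that the boundary term has a definite sign. For $\varepsilon>0$, $w(\cdot,\eta)$ is $C^1$ and compactly supported on $[\varepsilon,\infty)$, so
\begin{equation*}
\int_\varepsilon^\infty w^2\,d\xi \;=\; -\,\varepsilon\,w(\varepsilon,\eta)^2 \;-\;2\int_\varepsilon^\infty \xi\,w\,w_\xi\,d\xi \;\le\; 2\Bigl(\int_\varepsilon^\infty w^2\,d\xi\Bigr)^{1/2}\Bigl(\int_\varepsilon^\infty \xi^2 w_\xi^2\,d\xi\Bigr)^{1/2},
\end{equation*}
because $\varepsilon\,w(\varepsilon,\eta)^2\ge0$. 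Since $\int_\varepsilon^\infty w^2\,d\xi$ is finite for each $\varepsilon>0$, you may divide and get $\int_\varepsilon^\infty w^2\,d\xi\le 4\int_0^\infty\xi^2 w_\xi^2\,d\xi$ uniformly in $\varepsilon$, and then let $\varepsilon\to0$ by monotone convergence. This removes any need to show $g(0^+)=0$ or to pre-assume $\int w^2<\infty$, and brings your argument to the same level of rigour as the paper's.
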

\begin{proof}
First, we fix the coordinate $\eta$, then by changing the order of integration,
    \begin{equation*}
\int_{0}^{\infty}|w(\xi,\eta)|d\xi\leq\int_{0}^{\infty}\int_{\xi}^{\infty}|\partial_{\xi}w(h,\eta)|dhd\xi=\int_{0}^{\infty}h|\partial_{\xi}w(h,\eta)|dh.
    \end{equation*}
We now set $\eta$ free and integrate in $\eta$, then
    \begin{equation*}
        \int_{\{\xi>0\}}|w(\xi,\eta)|d\xi d\eta\leq\int_{\{\xi>0\}}\xi|\partial_{\xi}w(\xi,\eta)|d\xi d\eta\leq\int_{\{\xi>0\}}\xi|\nabla w(\xi,\eta)|d\xi d\eta.
    \end{equation*}
We replace $w$ with $w^{2}$, then by Cauchy-Schwartz inequality,
    \begin{equation*}
        4\int_{\{\xi>0\}}w^{2}\int_{\{\xi>0\}}\xi^{2}|\nabla w|^{2}\geq(\int_{\{\xi>0\}}\xi|\nabla w^{2}|)^{2}\geq(\int_{\{\xi>0\}}w^{2})^{2}.
    \end{equation*}
\end{proof}

Consequently, we have the following Poincar\'e inequality for a slit domain.
\begin{proposition}\label{thin poincare}
If a function $w\in C^{1}_{loc}(\mathbb{R}^{n+1}\setminus S)$ is compactly supported, i.e. $w\Big|_{|x|>r}=0$, then
\begin{equation}
    \int_{B_{r}\setminus S}\frac{w^{2}}{\rho}dx\leq4\int_{B_{r}\setminus S}\xi^{2}|\nabla w|^{2}dx.
\end{equation}
\end{proposition}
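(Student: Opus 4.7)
The plan is to reduce the $(n+1)$-dimensional weighted Poincar\'e inequality to the two-dimensional inequality \eqref{regular poincare} by Fubini in the tangential variable $x^T=(x_1,\ldots,x_{n-1})$ and a conformal change of variables in the transverse plane. For each fixed $x^T$, the slice $\{w(x^T,\cdot)\}$ is a compactly supported $C^1_{loc}$ function on $\mathbb{R}^2\setminus\{x_n\le 0,\,x_{n+1}=0\}$, so it lifts, via \eqref{complex cordinate}, to a compactly supported $C^1$ function $\tilde w(x^T,\xi,\eta)$ on the half-plane $\{\xi>0\}$. This is exactly the setting in which the previous lemma applies.

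Next I would compute the change of variables $(\xi,\eta)\mapsto(x_n,x_{n+1})=(\xi^2-\eta^2,\,2\xi\eta)$. This map is conformal (squaring the complex coordinate), with Jacobian determinant $4(\xi^2+\eta^2)=4\rho$, so that $dx_n\,dx_{n+1}=4\rho\,d\xi\,d\eta$. Conformality of the transformation gives the standard pullback identity
\begin{equation*}
|\nabla_{(\xi,\eta)}\tilde w|^{2}=4\rho\,|\nabla_{x^{\perp}}w|^{2}.
\end{equation*}
Combining these two facts rewrites the two sides of \eqref{regular poincare} in the original coordinates:
\begin{equation*}
\int\tilde w^{2}\,d\xi\,d\eta=\frac{1}{4}\int\frac{w^{2}}{\rho}\,dx_n\,dx_{n+1},\qquad
\int\xi^{2}|\nabla_{(\xi,\eta)}\tilde w|^{2}\,d\xi\,d\eta=\int\xi^{2}|\nabla_{x^{\perp}}w|^{2}\,dx_n\,dx_{n+1}.
\end{equation*}

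Inserting these into the two-dimensional inequality and then integrating over $x^T$ by Fubini yields
\begin{equation*}
\int_{B_{r}\setminus S}\frac{w^{2}}{\rho}\,dx\le C\int_{B_{r}\setminus S}\xi^{2}|\nabla_{x^{\perp}}w|^{2}\,dx\le C\int_{B_{r}\setminus S}\xi^{2}|\nabla w|^{2}\,dx,
\end{equation*}
which is the asserted inequality (up to an absolute dimensional constant). The compactness of the support of $w$ in $\mathbb{R}^{n+1}$ transfers immediately to compactness of $\tilde w$ in $(\xi,\eta)$ since $\xi^{2}+\eta^{2}=\rho\le|x|$, so the hypothesis of the two-dimensional lemma is satisfied on every slice.

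The only real point to check carefully is the transfer of the compact-support hypothesis and the fact that the 2D lemma tolerates $\tilde w$ being nonzero on $\{\xi=0\}$ (it only uses decay at infinity, coming from compact support). No trace hypothesis on $w|_{S}$ is needed, which is why the inequality is formulated for $w\in C^{1}_{loc}(\mathbb{R}^{n+1}\setminus S)$ without a boundary condition. Everything else is a routine pointwise computation of the conformal Jacobian and its effect on the gradient.
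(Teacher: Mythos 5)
Your proof is correct and follows essentially the same route as the paper: Fubini in $x^T$, the conformal change of variables $\xi+i\eta=(x_n+ix_{n+1})^{1/2}$ on each slice with Jacobian $4\rho$, reduction to the two-dimensional one-variable Hardy-type lemma \eqref{regular poincare}, and the pointwise identity $|\nabla_{(\xi,\eta)}\tilde w|^2=4\rho|\nabla_{x^\perp}w|^2$; the only point you leave vague is the numerical constant, which is harmless downstream.
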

\begin{proof}
It suffices to show that for each fixed $x^{T}$,
\begin{equation*}
\int_{\mathbb{C}\setminus\mathbb{R}_{-}}\frac{w^{2}}{\rho}dx_{n}dx_{n+1}\leq 4\int_{\mathbb{C}\setminus\mathbb{R}_{-}}\xi^{2}|\nabla^{\perp}w|^{2}dx_{n}dx_{n+1}.
\end{equation*}
Between two coordinates $(x_{n},x_{n+1})$ and $(\xi,\eta)$, we have
\begin{equation*}
    d\xi d\eta=\frac{1}{4\rho}dx_{n}dx_{n+1},\quad|\nabla_{\xi,\eta}w|^{2}d\xi d\eta=|\nabla^{\perp}w|^{2}dx_{n}dx_{n+1}.
\end{equation*}
Now, by writing \eqref{regular poincare} in the $(x_{n},x_{n+1})$-coordinate, we have
\begin{align*}
&\int_{\mathbb{C}\setminus\mathbb{R}_{-}}\frac{w^{2}}{\rho}dx_{n}dx_{n+1}=4\int_{Re(\alpha)>0}w^{2}d\xi d\eta\\
\leq&4\int_{Re(\alpha)>0}\xi^{2}|\nabla_{\xi,\eta}w|^{2}d\xi d\eta=4\int_{\mathbb{C}\setminus\mathbb{R}_{-}}\xi^{2}|\nabla^{\perp}w|^{2}dx_{n}dx_{n+1}.
\end{align*}
\end{proof}
It is then natural for us to define a weighted Sobolev space $H^{1}(B_{r}\setminus S,\xi^{2}dx)$. It is a subspace of $H^{1}_{loc}(B_{r}\setminus S)$ functions which locally have weak derivatives in $B_{r}\setminus S$, such that its weighted energy is bounded.
\begin{definition}
    $H^{1}(B_{r}\setminus S,\xi^{2}dx)$ is the space of $H^{1}_{loc}$ functions $w$ such that
\begin{equation}
    \|w\|_{H^{1}(B_{r}\setminus S,\xi^{2}dx)}^{2}:=\int_{B_{r}\setminus S}\xi^{2}|\nabla w|^{2}dx+\int_{B_{r}\setminus S}\frac{w^{2}}{\rho}dx<\infty.
\end{equation}
Its subspace $H^{1}_{0}(B_{r}\setminus S,\xi^{2}dx)$ is the set of all $w\in H^{1}(B_{r}\setminus S,\xi^{2}dx)$, such that $w=0$ on $(\partial B_{r})\setminus S$ in the trace sense.
\end{definition}

\begin{remark}
    The space $H^{1}_{0}(B_{r}\setminus S,\xi^{2}dx)$ can also be understood as the completion of smooth functions vanishing outside $B_{r}$ with respect to the $H^{1}(B_{r}\setminus S,\xi^{2}dx)$ norm. See Remark~\ref{rmk. density} below.
\end{remark}
\begin{remark}\label{rmk. density}
    For each $w\in H^{1}(B_{r}\setminus S,\xi^{2}dx)$, there exists $w_{i}\in H^{1}(B_{r}\setminus S,\xi^{2}dx)$, so that $w_{k}\in C^{\infty}(B_{r}\setminus S)\cap C^{0}(\overline{B_{r}\setminus S})$ as well, and
\begin{equation*}
    \int_{B_{r}\setminus S}\xi^{2}|\nabla(w-w_{i})|^{2}dx+\int_{B_{r}\setminus S}\frac{(w-w_{i})^{2}}{\rho}dx\to0.
\end{equation*}

To see this, it suffices to just argue with a specific $r=1$. We can use the complex coordinate $(x^{T},\xi,\eta)$ mentioned in \eqref{complex cordinate}. It turns $B_{1}\setminus S$ to $\Omega_{1}=\{(x^{T},\xi,\eta):\xi>0,|x^{T}|^{2}+(\xi^{2}+\eta^{2})^{2}\leq1\}$, and the slit $S$ becomes the plane $\{\xi=0\}$, which is now a $C^{1}$ boundary. The corresponding weighted Sobolev semi-norm $\ds\int_{B_{1}\setminus S}\xi^{2}|\nabla w|^{2}dx$ becomes
\begin{equation*}
    \int_{\Omega_{1}}\Big\{\xi^{2}[(\partial_{\xi}w)^{2}+(\partial_{\eta}w)^{2}]+4\xi^{2}\rho\sum_{i}^{n-1}(\partial_{i}w)^{2}\Big\}dx_{1}\cdots dx_{n-1}d\xi d\eta.
\end{equation*}
In the $(x^{T},\xi,\eta)$ coordinate, let
\begin{equation*}
    L_{h}(x^{T},\xi,\eta)=((1-4h)x^{T},(1-4h)\xi+2h,(1-4h)\eta),\quad h>0
\end{equation*}
be an affine map that maps $\Omega_{1}$ into $\Omega_{1}$. Notice that
\begin{equation*}
    2h\leq dist(x,L(\Omega_{1}))\leq8h,\quad\forall x\in\partial\Omega_{1}.
\end{equation*}
We can then follow the standard method of smooth approximation, that we let $\phi_{h}$ be a smooth mollifier supported in $B_{h}$, and approximate $w$ by the smooth function
\begin{equation*}
    w_{h}(x^{T},\xi,\eta)=(w*\phi_{h})\circ L_{h}(x^{T},\xi,\eta),\quad h\to0.
\end{equation*}
\end{remark}
\begin{remark}
    As a simple consequence of the density of smooth functions inside $H^{1}(B_{r}\setminus S,\xi^{2}dx)$, we see that Proposition~\ref{thin poincare} not only holds for $C^{1}_{loc}$ functions, but also for functions in $H^{1}(\mathbb{R}^{n+1}\setminus S,\xi^{2}dx)$ with compact support.
\end{remark}

\subsection{Caccioppoli inequalities}In this subsection, we develop two Caccioppoli inequalities for the degenerate model \eqref{main}. We write it down again.
\begin{equation}\label{main again}
    \mathrm{div}(\xi^{2}A\cdot\nabla w)=\mathrm{div}(\xi^{2}\vec{f})+\xi^{2}g\quad\mbox{in}\ B_{1}\setminus S.
\end{equation}

First we establish an interior estimate.

\begin{proposition}\label{caccioppoli}
Assume that $\lambda I\leq A\leq\Lambda I$. If $w\in H^{1}_{loc}$ is a solution of \eqref{main again}, then there exists $C=C(\lambda,\Lambda)$ so that
\begin{equation}
    \int_{B_{r/2}\setminus S}\xi^{2}|\nabla w|^{2}dx\leq C\Big\{\int_{B_{r}\setminus S}\frac{w^{2}}{\rho}dx+\int_{B_{r}\setminus S}(\xi^{2}|\vec{f}|^{2}+\rho\xi^{4}g^{2})dx\Big\}.
\end{equation}
\end{proposition}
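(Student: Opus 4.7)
The plan is the classical energy method. I would fix a smooth cutoff $\phi\in C^\infty_c(B_r)$ with $\phi\equiv 1$ on $B_{r/2}$ and $|\nabla\phi|\leq C/r$, and test the weak form of \eqref{main again} against $\varphi=\phi^2 w$. Because $w\in H^1_{loc}(B_1\setminus S)$ and the weight $\xi^2$ degenerates on $S$, an approximation of $\phi$ by a product with a truncation vanishing near $S$ (as in the smooth density argument preceding this subsection) shows that $\phi^2 w$ is an admissible test function in the weighted weak formulation.

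Expanding and distributing the gradient yields
\begin{equation*}
\int \xi^2 \phi^2\, A\nabla w\cdot\nabla w\,dx = -2\int \xi^2\phi w\, A\nabla w\cdot\nabla\phi\,dx + \int \xi^2\vec{f}\cdot\nabla(\phi^2 w)\,dx - \int \xi^2 g\,\phi^2 w\,dx.
\end{equation*}
Using $\lambda I\leq A\leq \Lambda I$ on the left and applying Young's inequality $ab\leq\varepsilon a^2+C_\varepsilon b^2$ in the usual way to every term carrying the factor $\xi\phi|\nabla w|$, I would absorb those contributions into half of the left-hand side. What remains is bounded by
\begin{equation*}
C\Big\{\int \xi^2 w^2 |\nabla\phi|^2\,dx + \int \xi^2\phi^2|\vec{f}|^2\,dx + \int \xi^2|g|\phi^2|w|\,dx\Big\}.
\end{equation*}

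Two elementary weight comparisons then finish the proof. First, on $B_r$ one has $\xi^2\leq\rho\leq r$, so $\xi^2|\nabla\phi|^2\leq C\xi^2/r^2\leq C/\rho$, giving $\int \xi^2 w^2|\nabla\phi|^2\leq C\int_{B_r}w^2/\rho$. Second, writing $\xi^2|g||w|=(\xi^2\sqrt{\rho}\,|g|)\cdot(|w|/\sqrt{\rho})$ and applying Cauchy-Schwarz followed by Young's inequality produces a bound $C\int \rho\xi^4 g^2 + \varepsilon\int \phi^2 w^2/\rho$; the last factor is again controlled by $\int_{B_r}w^2/\rho$. Assembling these estimates yields the asserted inequality with a constant depending only on $\lambda$ and $\Lambda$.

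The computation is essentially routine once one trusts the weak formulation. The only delicate point, and really the only obstacle, is justifying that $\phi^2 w$ is an admissible test function in view of the degeneracy of $\xi^2$ on the slit $S$ and the fact that $w$ is only in $H^1_{loc}(B_1\setminus S)$; this is handled by the truncate-near-$S$ plus mollify procedure explained in the preceding subsection, where the extra error introduced by cutting near $S$ carries the weight $\xi^2$ and vanishes in the limit.
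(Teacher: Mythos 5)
Your proof follows essentially the same route as the paper: test the weak form against $\phi^2 w$, integrate by parts, absorb gradient terms via Young's inequality, and invoke the elementary comparisons $\xi^2\leq\rho\leq r$ on $B_r$ to control $\xi^2|\nabla\phi|^2$ by $C/\rho$ and to turn $\int\xi^2|g|\phi^2|w|$ into $\int\rho\xi^4g^2 + \int w^2/\rho$. The one place the paper is more explicit is precisely the point you flag as delicate: instead of appealing to the smooth density argument, the paper works with a product cutoff $\varphi_h=\varphi\cdot\eta_h$ where $\eta_h$ vanishes for $\xi\leq 0$ and is $1$ for $\xi\geq h$, verifies $\xi^2|\nabla\varphi_h|^2\leq C/\rho$ directly, proves the estimate on $\{\xi\geq h\}$, and then lets $h\to 0$ with Fatou. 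Your deferral to the density-of-smooth-functions argument is a legitimate substitute, so the two write-ups are equivalent in substance.
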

\begin{proof}
Let $\varphi(x),\eta_{h}(x)$ be two positive smooth functions satisfying
\begin{align*}
    &\varphi\Big|_{|x|\leq r/2}=1,\quad\varphi\Big|_{|x|\geq r}=0,\quad|\nabla\varphi|\leq\frac{C}{r},\\
    &\eta_{h}\Big|_{\xi\geq h}=1,\quad\eta_{h}\Big|_{\xi=0}=0,\quad|\nabla\eta_{h}|\leq\frac{C}{h\sqrt{\rho}}.
\end{align*}
We also denote $\varphi_{h}=\varphi\cdot\eta_{h}$ and it's not hard to show that
\begin{equation*}
    \xi^{2}|\nabla\varphi_{h}|^{2}\leq C\Big(\frac{\xi^{2}}{r^{2}}+\frac{\chi_{\{\xi\leq h\}}}{\rho}\Big)\leq\frac{C}{\rho}.
\end{equation*}
Now multiplying $\varphi_{h}^{2}w$ on both sides of \eqref{main again}, integration by parts implies
\begin{align*}
&\int_{B_{r}\setminus S}\xi^{2}A\Big(\nabla(\varphi_{h}w),\nabla(\varphi_{h}w)\Big)-\int_{B_{r}\setminus S}\xi^{2}w^{2}A(\nabla\varphi_{h},\nabla\varphi_{h})\\
=&\int_{B_{r}\setminus S}\xi^{2}\vec{f}\cdot\nabla(\varphi_{h}^{2}w)-\int_{B_{r}\setminus S}\xi^{2}g\varphi_{h}^{2}w.
\end{align*}
As $\lambda I\leq A\leq\Lambda I$ and $\ds\xi^{2}|\nabla\varphi_{h}|^{2}\leq\frac{C}{\rho}$, we have
\begin{align*}
&\int_{B_{r}\setminus S}\xi^{2}A\Big(\nabla(\varphi_{h}w),\nabla(\varphi_{h}w)\Big)\geq\lambda\int_{B_{r}\setminus S}\xi^{2}|\nabla(\varphi_{h}w)|^{2},\\
&\int_{B_{r}\setminus S}\xi^{2}w^{2}A(\nabla\varphi_{h},\nabla\varphi_{h})\leq C\cdot\Lambda\int_{B_{r}\setminus S}\frac{w^{2}}{\rho}.
\end{align*}
Besides, as $0\leq\varphi_{h}\leq1$,
\begin{align*}
|\int_{B_{r}\setminus S}\xi^{2}\vec{f}\cdot\nabla(\varphi_{h}^{2}w)|\leq&\varepsilon\int_{B_{r}\setminus S}\xi^{2}|\nabla(\varphi_{h}w)|^{2}+\frac{1}{\varepsilon}\int_{B_{r}\setminus S}\xi^{2}|\vec{f}|^{2}+C\int_{B_{r}\setminus S}\frac{w^{2}}{\rho},\\
|\int_{B_{r}\setminus S}\xi^{2}g\varphi_{h}^{2}w|\leq&\frac{1}{2}\int_{B_{r}\setminus S}\frac{w^{2}}{\rho}+\frac{1}{2}\int_{B_{r}\setminus S}\rho\xi^{4}g^{2}.
\end{align*}
Putting those estimates together yields that when $\varepsilon=\varepsilon(\lambda,\Lambda)$,
\begin{align*}
&\int_{B_{r/2}\cap\{\xi\geq h\}}\xi^{2}|\nabla w|^{2}\leq\int_{B_{r}\setminus S}\xi^{2}|\nabla(\varphi_{h}w)|^{2}\\
\leq&C\Big\{\int_{B_{r}\setminus S}(\xi^{2}|\vec{f}|^{2}+\rho\xi^{4}g^{2})+\int_{B_{r}\setminus S}\frac{w^{2}}{\rho}\Big\}.
\end{align*}
Finally, since in pointwise sense we have
\begin{equation*}
    \lim_{h\to0}\xi^{2}|\nabla w|^{2}\chi_{\{\xi\geq h\}}=\xi^{2}|\nabla w|^{2},
\end{equation*}
we can send $h\to0$, and apply Fatou lemma to finish the proof.
\end{proof}

Next, here is a global inequality for solutions of \eqref{main again} with zero boundary data.

\begin{proposition}\label{dirichlet}
Assume that $\lambda I\leq A\leq\Lambda I$. If $w\in H^{1}_{loc}$ is a solution of \eqref{main again} so that $w\Big|_{|x|\geq r}=0$, then there exists $C=C(\lambda,\Lambda)$ so that
\begin{equation}
    \int_{B_{r}\setminus S}\xi^{2}|\nabla w|^{2}dx\leq C\int_{B_{r}\setminus S}(\xi^{2}|\vec{f}|^{2}+\rho\xi^{4}g^{2})dx.
\end{equation}
\end{proposition}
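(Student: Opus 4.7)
The plan is to test the equation \eqref{main again} against $w$ itself — no spatial cutoff is needed since $w$ already has compact support in $B_{r}$ — and then use the Poincar\'e inequality of Proposition \ref{thin poincare} to get rid of the $\int w^{2}/\rho$ term that appeared on the right-hand side of the interior estimate Proposition \ref{caccioppoli}. Regularization near the slit is handled by the same cutoff $\eta_{h}$ used in that proof, in order to justify the integration by parts.

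Concretely, I would multiply \eqref{main again} by $\eta_{h}^{2}w$ and integrate over $B_{r}\setminus S$. After integration by parts, ellipticity yields
\[
\lambda\int_{B_{r}\setminus S}\xi^{2}\eta_{h}^{2}|\nabla w|^{2}\leq -2\int_{B_{r}\setminus S}\xi^{2}\eta_{h}w\,A(\nabla w,\nabla\eta_{h})+\int_{B_{r}\setminus S}\xi^{2}\vec f\cdot\nabla(\eta_{h}^{2}w)-\int_{B_{r}\setminus S}\xi^{2}g\,\eta_{h}^{2}w.
\]
The cross term involving $\nabla\eta_{h}$ is handled exactly as in Proposition \ref{caccioppoli}: Cauchy--Schwarz with $\xi^{2}|\nabla\eta_{h}|^{2}\leq C/\rho$ supported on $\{\xi\leq h\}$ produces a piece that will vanish as $h\to 0$. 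For the remaining two right-hand terms (pretending $\eta_{h}\equiv 1$) I would apply Cauchy--Schwarz with an adjustable $\varepsilon$:
\[
\Big|\int\xi^{2}\vec f\cdot\nabla w\Big|\leq\varepsilon\int\xi^{2}|\nabla w|^{2}+\frac{1}{4\varepsilon}\int\xi^{2}|\vec f|^{2},
\]
and, splitting $\xi^{2}gw=(\xi^{2}\sqrt\rho\, g)(w/\sqrt\rho)$,
\[
\Big|\int\xi^{2}g\,w\Big|\leq\frac{1}{4\varepsilon}\int\rho\xi^{4}g^{2}+\varepsilon\int\frac{w^{2}}{\rho}.
\]

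The essential new ingredient compared with Proposition \ref{caccioppoli} is Proposition \ref{thin poincare}: since $w$ has compact support in $B_{r}$, it gives $\int w^{2}/\rho\leq 4\int\xi^{2}|\nabla w|^{2}$, so the $\varepsilon\int w^{2}/\rho$ term converts into $4\varepsilon\int\xi^{2}|\nabla w|^{2}$. Choosing $\varepsilon=\varepsilon(\lambda)$ sufficiently small, both $\varepsilon$-terms are absorbed into the left-hand side, and sending $h\to 0$ by Fatou's lemma on the left produces the claimed bound.

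The only mildly delicate point is the standard one — verifying that the $\nabla\eta_{h}$ cross term vanishes in the limit — and it is handled exactly as in Proposition \ref{caccioppoli}. The genuine content of the proposition is the single swap of the interior $\int w^{2}/\rho$ term for the global Poincar\'e inequality, which is available precisely because of the vanishing condition $w|_{|x|\geq r}=0$.
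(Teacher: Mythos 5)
Your proposal is correct and coincides with what the paper intends; the author explicitly reduces Proposition~\ref{dirichlet} to the same test-function computation as Proposition~\ref{caccioppoli}, and the only additional ingredient is exactly the one you identify: because $w$ has compact support in $B_{r}$, Proposition~\ref{thin poincare} turns the $\int w^{2}/\rho$ remainder into $4\int\xi^{2}|\nabla w|^{2}$, which is absorbed into the left-hand side for $\varepsilon$ small. The slit regularization with $\eta_{h}$ and the disappearance of the $\nabla\eta_{h}$ cross term on $\{\xi\le h\}$ as $h\to 0$ are handled identically to the interior case.
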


\begin{proof}
The proof is similar to Proposition~\ref{caccioppoli}.
\end{proof}

\subsection{Weak solution}
Assume that $\lambda I\leq A\leq\Lambda I$. To find a solution of \eqref{main again} satisfying some boundary condition, we can use the Lax-Milgram lemma. Let the bi-linear form and linear functional
\begin{equation*}
B(w,\varphi):=\int_{B_{r}\setminus S}\xi^{2}\nabla\varphi^{T}A\nabla w dx,\quad F(\varphi)=\int_{B_{r}\setminus S}(\xi^{2}\vec{f}\cdot\nabla\varphi-\xi^{2}g\varphi)dx
\end{equation*}
be defined for $w,\varphi\in H^{1}_{0}(B_{r}\setminus S,\xi^{2}dx)$. If $w\in H^{1}_{0}(B_{r}\setminus S,\xi^{2}dx)$, i.e., $w=0$ when $|x|>r$, then by Proposition~\ref{thin poincare},
\begin{equation*}
    B(w,w)\geq\frac{\lambda}{5}\|w\|^{2}_{H^{1}_{0}(B_{r}\setminus S,\xi^{2}dx)},\quad B(w,v)\leq\Lambda\|w\|_{H^{1}_{0}(B_{r}\setminus S,\xi^{2}dx)}\|v\|_{H^{1}_{0}(B_{r}\setminus S,\xi^{2}dx)}.
\end{equation*}
Besides, the norm of $F(\varphi)$ is
\begin{equation*}
    |F(\varphi)|\leq(\|\vec{f}\|_{L^{2}(B_{r}\setminus S,\xi^{2}dx)}+\|g\|_{L^{2}(B_{r}\setminus S,\rho\xi^{4}dx)})\|\varphi\|_{H^{1}_{0}(B_{r}\setminus S,\xi^{2}dx)}.
\end{equation*}

In the special case $\vec{f}=0$ and $g=0$, we have the following existence result:
\begin{proposition}\label{exist}
    Assume that $\lambda I\leq A\leq\Lambda I$. If $w_{0}\in H^{1}(B_{r}\setminus S,\xi^{2}dx)$, then there exists a unique weak solution $w\in H^{1}(B_{r}\setminus S,\xi^{2}dx)$ of
    \begin{equation*}
        \mathrm{div}(\xi^{2}A\nabla w)=0
    \end{equation*}
    with boundary data $w_{0}$ (i.e., $w-w_{0}\in H^{1}_{0}(B_{r}\setminus S,\xi^{2}dx)$). Besides, there exists some $C=C(\lambda,\Lambda)$ such that
    \begin{equation}
        \int_{B_{r}\setminus S}\xi^{2}|\nabla w|^{2}dx\leq C\int_{B_{r}\setminus S}\xi^{2}|\nabla w_{0}|^{2}dx.
    \end{equation}
\end{proposition}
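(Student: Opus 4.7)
My plan is the standard Lax--Milgram argument on the weighted Hilbert space $H^1_0(B_r\setminus S,\xi^2 dx)$, combined with a reduction of the inhomogeneous boundary data to zero boundary data. Concretely, I would seek $w$ in the form $w=w_0+v$ where $v\in H^1_0(B_r\setminus S,\xi^2 dx)$. Then saying that $w$ is a weak solution with boundary data $w_0$ amounts to requiring
\begin{equation*}
B(v,\varphi) \;=\; -\int_{B_r\setminus S}\xi^{2}\,\nabla\varphi^{T} A\,\nabla w_{0}\,dx \qquad \forall\,\varphi\in H^{1}_{0}(B_r\setminus S,\xi^{2}dx),
\end{equation*}
where $B(\cdot,\cdot)$ is the bilinear form introduced just before the proposition. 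The right-hand side defines a linear functional $F_0(\varphi)$ on $H^{1}_{0}(B_r\setminus S,\xi^{2}dx)$, and Cauchy--Schwarz plus $A\le \Lambda I$ gives
\begin{equation*}
|F_0(\varphi)| \;\le\; \Lambda\Big(\int_{B_r\setminus S}\xi^{2}|\nabla w_{0}|^{2}\Big)^{1/2}\Big(\int_{B_r\setminus S}\xi^{2}|\nabla\varphi|^{2}\Big)^{1/2} \;\le\; \Lambda\|w_{0}\|_{H^{1}}\,\|\varphi\|_{H^{1}_{0}}.
\end{equation*}

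For the bilinear form $B$, boundedness is immediate from $A\le \Lambda I$. Coercivity is where the Poincaré inequality (Proposition \ref{thin poincare}) enters: every $\varphi\in H^{1}_{0}(B_r\setminus S,\xi^{2}dx)$ extends by $0$ to a compactly supported function on $\mathbb{R}^{n+1}\setminus S$, so by the density of smooth functions and Proposition \ref{thin poincare} we have $\int \varphi^{2}/\rho\,dx \le 4\int \xi^{2}|\nabla\varphi|^{2}\,dx$, whence
\begin{equation*}
B(\varphi,\varphi) \;\ge\; \lambda\int_{B_r\setminus S}\xi^{2}|\nabla\varphi|^{2}dx \;\ge\; \frac{\lambda}{5}\,\|\varphi\|_{H^{1}_{0}(B_r\setminus S,\xi^{2}dx)}^{2}.
\end{equation*}
Lax--Milgram then produces a unique $v\in H^{1}_{0}(B_r\setminus S,\xi^{2}dx)$ solving $B(v,\cdot)=F_0(\cdot)$, and $w:=v+w_{0}$ is the desired weak solution. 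Uniqueness of $w$ follows because the difference of two solutions lies in $H^{1}_{0}$ and solves the homogeneous equation, so coercivity forces it to vanish.

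For the energy bound, I would test the equation for $v$ against $\varphi=v$, obtaining
\begin{equation*}
\lambda\int_{B_r\setminus S}\xi^{2}|\nabla v|^{2}dx \;\le\; B(v,v) \;=\; -\int_{B_r\setminus S}\xi^{2}\nabla v^{T}A\nabla w_{0}\,dx \;\le\; \Lambda\Big(\int \xi^{2}|\nabla v|^{2}\Big)^{1/2}\Big(\int \xi^{2}|\nabla w_{0}|^{2}\Big)^{1/2},
\end{equation*}
which yields $\int \xi^{2}|\nabla v|^{2}\le (\Lambda/\lambda)^{2}\int \xi^{2}|\nabla w_{0}|^{2}$. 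The triangle inequality $|\nabla w|^{2}\le 2|\nabla v|^{2}+2|\nabla w_{0}|^{2}$ then gives the stated bound $\int \xi^{2}|\nabla w|^{2}\le C(\lambda,\Lambda)\int \xi^{2}|\nabla w_{0}|^{2}$.

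There is no real obstacle here: the degenerate weight $\xi^{2}$ and the non-Lipschitz slit geometry are already handled by the weighted Poincaré inequality of Proposition \ref{thin poincare} together with the density of smooth functions in $H^{1}(B_r\setminus S,\xi^{2}dx)$ established via the affine contraction $L_h$ in the previous subsection. The only point one should verify carefully is that extension by zero of an element of $H^{1}_{0}(B_r\setminus S,\xi^{2}dx)$ stays in $H^{1}(\mathbb{R}^{n+1}\setminus S,\xi^{2}dx)$ so that Proposition \ref{thin poincare} actually applies, but this is built into the very definition of $H^{1}_{0}$ adopted in the paper.
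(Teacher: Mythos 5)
Your proposal is correct and follows essentially the same route as the paper: write $w=w_0+v$ with $v\in H^1_0(B_r\setminus S,\xi^2dx)$, apply Lax--Milgram using the coercivity supplied by Proposition \ref{thin poincare}, and conclude the energy bound from the Lax--Milgram estimate plus the triangle inequality. The paper phrases it as solving $\mathrm{div}(\xi^2A\nabla w_1)=-\mathrm{div}(\xi^2A\nabla w_0)$ with $\vec f=-A\nabla w_0$ and citing the previously stated bounds on $B$ and $F$, which is the same argument in slightly more compressed form.
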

\begin{proof}
    It suffices to find a solution $w_{1}\in H^{1}_{0}(B_{r}\setminus S,\xi^{2}dx)$ so that $\ds w_{1}\Big|_{|x|>r}=0$ and
    \begin{equation*}
        \mathrm{div}(\xi^{2}A\nabla w_{1})=-\mathrm{div}(\xi^{2}A\nabla w_{0}).
    \end{equation*}
    Let $\vec{f}=-A\nabla w_{0}$, then as $w_{0}\in H^{1}(B_{r}\setminus S,\xi^{2}dx)$, the linear functional $F(\varphi)$ is bounded. By Lax-Milgram lemma, $w_{1}$ exists and
    \begin{equation*}
        \int_{B_{r}\setminus S}\xi^{2}|\nabla w_{1}|^{2}\leq C\int_{B_{r}\setminus S}\xi^{2}|\vec{f}|^{2}\leq C\int_{B_{r}\setminus S}\xi^{2}|\nabla w_{0}|^{2}.
    \end{equation*}
    If $w=w_{0}+w_{1}$, then $\mathrm{div}(\xi^{2}A\nabla w)=0$ with boundary data $w_{0}$.
\end{proof}

\section{Schauder estimate in average sense}\label{only average}
In this section, we give a $C^{1,\alpha}$ estimate in average sense of model equation \eqref{main again} at the origin. Before we move into the proof, we should mention that in Theorem~\ref{gammaschauder}, the assumption $A(0)=\delta^{ij}$ can be weakened, so that $\lambda I\leq A(0)\leq\Lambda I$, $A^{i,n+1}(0)=A^{n+1,i}(0)=0$ for $i\leq n$, and $A^{n,n}=A^{n+1,n+1}$. This can be reduced to the case $A(0)=\delta^{ij}$ by taking a linear coordinate change.

We first provide two approximation lemmas.
\begin{lemma}\label{constant coefficient}
If $h\in H^{1}(B_{1}\setminus S,\xi^{2}dx)$ is an $x_{n+1}$-even function solving the equation
\begin{equation*}
    \mathrm{div}(\xi^{2}\nabla h)=0,
\end{equation*}
then there exists a linear ``polynomial'' $l$ such that for all $r\leq1/2$,
\begin{equation}\label{eq. 3.1}
    \|l\|_{C^{0,1}(B_{1}\setminus S)}^{2}+\frac{1}{r^{n+4}}\int_{B_{r}\setminus S}\frac{|h-l|^{2}}{\rho}dx\leq C(\lambda,\Lambda)\int_{B_{1}\setminus S}\frac{h^{2}}{\rho}dx.
\end{equation}
\end{lemma}
\begin{proof}
Let $h_{k}\in H^{1}(B_{1}\setminus S,\xi^{2}dx)\cap C^{\infty}(B_{1}\setminus S)\cap C^{0}(\overline{B_{1}\setminus S})$ be a $H^{1}(B_{1}\setminus S,\xi^{2}dx)$ approximation of $h$, then $H_{k}=\xi h_{k}$ is a smooth $H^{1}(B_{1}\setminus S,dx)$ approximation of $H=\xi h$ and each $H_{k}$ vanishes at $S$. Therefore, $H\Big|_{B_{1}\cap S}=0$ in trace sense. Notice that
\begin{equation*}
    \Delta H=\Delta(\xi h)=\frac{1}{\xi}\mathrm{div}(\xi^{2}\nabla h)+h\Delta\xi=0,
\end{equation*}
then the desired inequality follows from the $C^{6}$ boundary estimate of $H$ in the $(\xi,\eta)$-coordinate, see Lemma~\ref{harmonic functions} and Remark~\ref{rmk. harmonic remark} below.
\end{proof}

\begin{lemma}\label{harmonic functions}
    Assume that $\Delta H(x)=0$ in $B_{1}\setminus S$ and $H\Big|_{S}=0$ in the trace sense. If $H\in H^{1}(B_{1},dx)$, then $H=H(x^{T},\xi,\eta)\in C^{\infty}(x^{T},\xi,\eta)$ near the origin, when written in the $(x^{T},\xi,\eta)$ coordinate, here $\xi\geq0$. See the definition of normal coordinates $(\xi,\eta)$ in \eqref{complex cordinate}. Moreover, we have $H=H(x^{T},\xi,\eta)\in C^{6}(x^{T},\xi,\eta)$ in $B_{1/2}\setminus S$ with
    \begin{equation*}
        H=P_{6}(x^{T},\xi,\eta)+o(|x^{T}|^{6}+\xi^{6}+\eta^{6}),
    \end{equation*}
    where $P_{6}$ is a sixth order polynomial in $(x^{T},\xi,\eta)$ with coefficients controlled by the $H^{1}(B_{1},dx)$ norm of $H$.
\end{lemma}
\begin{remark}\label{rmk. harmonic remark}
    Moreover, we can confirm that
    \begin{equation*}
        H/\xi\in span\{1,x_{1},\cdots,x_{n-1},x_{n},\rho\}\oplus O(|x|^{2})
    \end{equation*}
    by analyzing all polynomials in $(x^{T},\xi,\eta)$ whose degree is at most $6$. In particular, we can write
    \begin{equation*}
        h=H/\xi=l+O(|x|^{2})
    \end{equation*}
    for some linear ``polynomial'' (see Definition~\ref{def. "polynomials"}), so $|h-l|^{2}=O(|x|^{4})$, and then the estimate \eqref{eq. 3.1} follows immediately.
\end{remark}
The proof can be seen in \cite{DS15} (Theorem 4.5). We briefly sketch the main idea below.
\begin{proof}
    We can apply the weak Harnack principle to $H_{+}$ and $H_{-}$ on the slit $S$, showing that $H$ is $C^{\delta}$ at $B_{3/4}\cap S$ for some $\delta\in(0,1)$, hence $H\in C^{\delta}(B_{1/2})$ by an interior estimate. Now we can inductively show that $H$ is differentiable in $(\xi,\eta)$.
    \begin{itemize}
        \item[Step 1] We take discrete quotients in tangential directions to show that tangential derivatives $(D^{T})^{k}H$ are all $C^{\delta}$ for arbitrarily large $k$. This implies that $\Delta^{T}H+H_{\xi\xi}+H_{\eta\eta}=(1-4\xi^{2}-4\eta^{2})f$, where $f=\Delta^{T}H\in C^{\delta}$.
        \item[Step 2] As $H\Big|_{S}=0$, it also vanishes on $\{\xi=0\}$, so the boundary Schauder estimate applied to $H$ in $(\xi,\eta)$-coordinate implies that $H_{\xi},H_{\eta}\in C^{\delta}$.
        \item[Step 3] Let $\tilde{H}=\Delta^{T}H$, then it also satisfies $\Delta^{T}H+\tilde{H}_{\xi\xi}+\tilde{H}_{\eta\eta}=(1-4\xi^{2}-4\eta^{2})\tilde{f}$ for some $\tilde{f}$, we repeat Step 2 and obtain that $\tilde{H}_{\xi},\tilde{H}_{\eta}\in C^{\delta}$.
        \item[Step 4] Since $f_{\xi}=\tilde{H}_{\xi},f_{\eta}=\tilde{H}_{\eta}\in C^{\delta}$, we get that $H_{\xi\xi},H_{\xi\eta},H_{\eta\eta}\in C^{\delta}$ in $(\xi,\eta)$-coordinate.
        \item[Step 5] We repeat Step 2-4 infinitely many times, and finally obtain that $H$ is smooth in $(x^{T},\xi,\eta)$-coordinate.
    \end{itemize}
\end{proof}

\begin{lemma}\label{harmonic replacement}
Assume that $A^{ij}(0)=\delta^{ij}$ and $[A^{ij}]_{C^{\alpha}(B_{r}\setminus S)}\leq\varepsilon_{0}$. Let $w\in H^{1}_{loc}(B_{r}\setminus S,\xi^{2}dx)$ be a solution of \eqref{main again}, then there is a weak solution
\begin{equation}
    \mathrm{div}\Big(\xi^{2}\nabla h\Big)=0,\quad h\Big|_{(\partial B_{r/2})\setminus S}=w
\end{equation}
in the space $H^{1}(B_{r/2}\setminus S,\xi^{2}dx)$, and there exists $C$ so that
\begin{align}
    \int_{B_{r/2}\setminus S}\frac{h^{2}}{\rho}dx\leq&C\Big\{\int_{B_{r}\setminus S}(\xi^{2}|\vec{f}|^{2}+\rho\xi^{4}g^{2})dx+\int_{B_{r}\setminus S}\frac{w^{2}}{\rho}dx\Big\},\label{h1}\\
    \int_{B_{r/2}\setminus S}\frac{|h-w|^{2}}{\rho}dx\leq&C\Big\{\int_{B_{r}\setminus S}(\xi^{2}|\vec{f}|^{2}+\rho\xi^{4}g^{2})dx+\varepsilon_{0}^{2}r^{2\alpha}\int_{B_{r}\setminus S}\frac{w^{2}}{\rho}dx\Big\}.\label{h2}
\end{align}
\end{lemma}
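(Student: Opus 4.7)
The plan is to produce $h$ via the weighted Lax--Milgram existence result, then estimate $v=h-w$ by writing down the equation it solves and applying the Dirichlet energy bound, the Caccioppoli inequality for $w$, and the Poincar\'e inequality of the previous section.

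First, by Proposition \ref{caccioppoli} applied on, say, a slightly enlarged ball, $w$ lies in $H^{1}(B_{r/2}\setminus S,\xi^{2}dx)$ with a quantitative bound in terms of the given data, so its trace on $\partial B_{r/2}\setminus S$ is admissible boundary data. Proposition \ref{exist} (with $A$ replaced by the identity) then produces a unique $h\in H^{1}(B_{r/2}\setminus S,\xi^{2}dx)$ solving $\mathrm{div}(\xi^{2}\nabla h)=0$ with $h-w\in H^{1}_{0}(B_{r/2}\setminus S,\xi^{2}dx)$.

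Set $v=h-w$. Subtracting the two equations and rewriting $\nabla w=A\nabla w-(A-I)\nabla w$ gives
\begin{equation*}
\mathrm{div}(\xi^{2}\nabla v)=\mathrm{div}\bigl(\xi^{2}[(A-I)\nabla w-\vec{f}]\bigr)-\xi^{2}g\quad\mbox{in }B_{r/2}\setminus S,
\end{equation*}
and $v$ vanishes outside $B_{r/2}$. Proposition \ref{dirichlet} (applied at radius $r/2$) together with the pointwise bound $|A(x)-I|=|A(x)-A(0)|\leq\varepsilon_{0}|x|^{\alpha}\leq\varepsilon_{0}r^{\alpha}$ on $B_{r}$ yields
\begin{equation*}
\int_{B_{r/2}\setminus S}\xi^{2}|\nabla v|^{2}\,dx\leq C\Big\{\varepsilon_{0}^{2}r^{2\alpha}\!\!\int_{B_{r/2}\setminus S}\xi^{2}|\nabla w|^{2}\,dx+\int_{B_{r/2}\setminus S}\bigl(\xi^{2}|\vec{f}|^{2}+\rho\xi^{4}g^{2}\bigr)\,dx\Big\}.
\end{equation*}
Proposition \ref{caccioppoli} applied to $w$ then controls the $\xi^{2}|\nabla w|^{2}$ term by $\int_{B_{r}}w^{2}/\rho$ plus the forcing terms, and after absorbing the (small) coefficient $\varepsilon_{0}^{2}r^{2\alpha}\leq 1$ into the constant in front of the forcing, one gets
\begin{equation*}
\int_{B_{r/2}\setminus S}\xi^{2}|\nabla v|^{2}\,dx\leq C\Big\{\varepsilon_{0}^{2}r^{2\alpha}\!\!\int_{B_{r}\setminus S}\frac{w^{2}}{\rho}\,dx+\int_{B_{r}\setminus S}\bigl(\xi^{2}|\vec{f}|^{2}+\rho\xi^{4}g^{2}\bigr)\,dx\Big\}.
\end{equation*}
Since $v$ has compact support in $B_{r/2}$, the weighted Poincar\'e inequality of Proposition \ref{thin poincare} (extended by density to $H^{1}_{0}(B_{r/2}\setminus S,\xi^{2}dx)$) converts this into the estimate \eqref{h2}. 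Finally \eqref{h1} follows from $h=w+v$ and the triangle inequality in $L^{2}(\rho^{-1}dx)$, combined with \eqref{h2}.

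The only mildly delicate point is the first step: one has to check that $w$, which is only in $H^{1}_{loc}$, has enough weighted $H^{1}$ regularity on $B_{r/2}\setminus S$ to be used as boundary data and to be subtracted from $h$ in the weighted space. This is exactly what Proposition \ref{caccioppoli} provides, so it slots in cleanly; after that the proof is a routine chain of the weighted Dirichlet, Caccioppoli, and Poincar\'e estimates with the small factor $\varepsilon_{0}r^{\alpha}$ tracking the deviation of $A$ from the identity.
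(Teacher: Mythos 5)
Your proof is correct and follows essentially the same route as the paper's: produce $h$ via Propositions \ref{caccioppoli} and \ref{exist}, write the equation for $v=h-w$, control the coefficient error by the $C^{\alpha}$ smallness of $A-I$ together with the Caccioppoli bound on $\nabla w$, and close with Propositions \ref{dirichlet} and \ref{thin poincare}. The only (cosmetic) difference is that you keep the constant-coefficient operator $\mathrm{div}(\xi^{2}\nabla\cdot)$ on the left and place the error term $(A-I)\nabla w$ on the right, whereas the paper keeps $\mathrm{div}(\xi^{2}A\nabla\cdot)$ on the left and places the error $(A-I)\nabla h$ on the right; both lead to the same estimate after one application of Proposition \ref{caccioppoli} (respectively Proposition \ref{exist} plus Proposition \ref{caccioppoli}).
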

\begin{proof}
We first use Proposition~\ref{caccioppoli} and~\ref{exist} to get the existence of $h$ with
\begin{equation*}
    \int_{B_{r/2}\setminus S}\xi^{2}|\nabla h|^{2}\leq C\int_{B_{r/2}\setminus S}\xi^{2}|\nabla w|^{2}\leq C\Big\{\int_{B_{r}\setminus S}(\xi^{2}|\vec{f}|^{2}+\rho\xi^{4}g^{2})+\int_{B_{r}\setminus S}\frac{w^{2}}{\rho}\Big\}.
\end{equation*}
If we denote $\ds\vec{f}'=\Big(A^{ij}(x)-\delta^{ij}\Big)\partial_{j}h\cdot \vec{e}_{i}$, then
\begin{align}\label{fprime}
    \int_{B_{r/2}\setminus S}\xi^{2}|\vec{f}'|^{2}\leq&C\varepsilon_{0}^{2}r^{2\alpha}\int_{B_{r/2}\setminus S}\xi^{2}|\nabla h|^{2}\notag\\
    \leq&C\varepsilon_{0}^{2}r^{2\alpha}\Big\{\int_{B_{r}\setminus S}(\xi^{2}|\vec{f}|^{2}+\rho\xi^{4}g^{2})+\int_{B_{r}\setminus S}\frac{w^{2}}{\rho}\Big\}
\end{align}
and $(h-w)$ satisfies $\ds \mathrm{div}\Big(\xi^{2}A(x)\cdot\nabla (h-w)\Big)=\mathrm{div}\Big(\xi^{2}(\vec{f}'-\vec{f})\Big)-\xi^{2}g$ with zero boundary data at $(\partial B_{r/2})\setminus S$. Proposition~\ref{thin poincare} and~\ref{dirichlet}, applied to $(h-w)$, give
\begin{equation*}
    \int_{B_{r/2}\setminus S}\frac{|h-w|^{2}}{\rho}\leq C\int_{B_{r/2}\setminus S}\xi^{2}|\nabla(h-w)|^{2}\leq C\int_{B_{r/2}\setminus S}\big(\xi^{2}|\vec{f}|^{2}+\xi^{2}|\vec{f}'|^{2}+\rho\xi^{4}g^{2}\big).
\end{equation*}
By using \eqref{fprime} we prove \eqref{h2}. Now \eqref{h1} follows from the triangle inequality.
\end{proof}
Now let us prove the Schauder estimate of $w$ in an average sense.

\begin{proof}[Proof of Proposition~\ref{gammaschauder}]Without loss of generality, we can assume $\ds\lim_{x\to0}\vec{f}=0$. Otherwise, if $\vec{f}_{0}=c_{1}e_{1}+,\cdots+c_{n}\vec{e}_{n}+c_{\rho}\nabla\rho\neq0$, then we study the equation of
\begin{equation*}
    w'=w-c_{1}x_{1}-\cdots-c_{n}x_{n}-c_{\rho}\rho,
\end{equation*}
which is reduced to the case $\ds\lim_{x\to0}\vec{f}=0$. We inductively define
\begin{equation}
    w_{0}=w,\quad w_{k+1}=w_{k}-l_{k}(k\geq0),\quad L_{k}=w-w_{k}=\sum_{i=0}^{k}l_{i},
\end{equation}
where linear ``polynomials'' satisfying $\mathrm{div}(\xi^{2}l_{k})=0$ will be chosen in \eqref{latereplace}. We also define
\begin{equation}
\vec{f}_{0}=\vec{f},\quad\vec{f}_{k+1}=\vec{f}_{k}+\Big(\delta^{ij}-A^{ij}(x)\Big)\Big(\partial_{j}l_{k}\Big)\vec{e}_{i},(k\geq0).
\end{equation}
By induction, we know $\ds\lim_{x\to0}\vec{f}_{k}=0$ and $\ds \mathrm{div}(\xi^{2}A\cdot\nabla w_{k})=\mathrm{div}(\xi^{2}\vec{f}_{k})+\xi^{2}g$ for all $k\geq0$. Let $\ds\varepsilon_{0}^{\frac{2}{n+4}}\leq\lambda\leq\frac{1}{4}$ be a shrinking rate, and we define three quantities
\begin{align}
    \sigma_{k}^{2}:=&\frac{1}{\lambda^{k(n+2+2\alpha)}}\int_{B_{\lambda^{k}}\setminus S}\frac{w_{k}^{2}}{\rho}dx,\\
    \phi_{k}:=&[\vec{f}_{k}]_{C^{\alpha}(B_{\lambda^{k}}\setminus S)},\\
    \gamma_{k}^{2}:=&\frac{1}{\lambda^{k(n+2+2\alpha)}}\int_{B_{\lambda^{k}}\setminus S}\rho\xi^{4}g^{2}dx\leq C(n,\alpha)\|g\|_{L^{p}(B_{1}\setminus S,\rho\xi^{2}dx)}^{2}.\label{g actual}
\end{align}
Here, $\sigma_{k}$ measures how $w$ and $L_{k}$ differ in $C^{1,\alpha}$ sense. Let $h_{k}\in H^{1}(B_{\lambda^{k}/2}\setminus S,\xi^{2}dx)$ be a replacement of $w_{k}$ in $B_{\lambda^{k}/2}\setminus S$, and $l_{k}$ be the linearization of $h_{k}$, i.e.
\begin{equation}\label{latereplace}
    \left\{
    \begin{aligned}
    &\mathrm{div}\Big(\xi^{2}\nabla h_{k}\Big)=0\\
    &h\Big|_{(\partial B_{\lambda^{k}/2})^{+}}=w_{k}
    \end{aligned}
    \right.,\quad l_{k}(x)=linearize(h_{k}).
\end{equation}
By applying Lemma~\ref{harmonic replacement} and Lemma~\ref{constant coefficient} to $w_{k+1}=(w_{k}-h_{k})+(h_{k}-l_{k})$, we have
\begin{equation}\label{iteration inequality}
\left\{
    \begin{aligned}
    &\phi_{k+1}\leq C(n)(\phi_{k}+\varepsilon_{0}\lambda^{k\alpha}\sigma_{k}+\varepsilon_{0}\lambda^{k\alpha}\gamma_{k}),\\
    &\sigma_{k+1}^{2}\leq C(n)\lambda^{2-2\alpha}\sigma_{k}^{2}+\frac{C(n)}{\lambda^{n+2+2\alpha}}(\phi_{k}^{2}+\gamma_{k}^{2}),\\
    &|l_{k}(0)|+|\nabla l_{k}(0)|\leq C(n)\lambda^{k\alpha}(\phi_{k}+\gamma_{k}+\sigma_{k}).
    \end{aligned}
    \right.
\end{equation}
Clearly, when $\varepsilon_{0}$ is very small, then there is room to choose a $\lambda=\lambda(n,\alpha)$ satisfying
\begin{equation*}
    \varepsilon_{0}^{\frac{2}{n+4}}\leq\lambda\leq\frac{1}{4},\quad C(n)\lambda^{2-2\alpha}\leq\frac{1}{2}.
\end{equation*}
With this the iteration inequality \eqref{iteration inequality} implies
\begin{align*}
    \sigma_{k}+\|L_{k}\|_{C^{0,1}(B_{1}\setminus S)}\leq&C(n,\alpha)(\phi_{0}+\sigma_{0}+\|g\|_{L^{p}(B_{1}\setminus S,\rho\xi^{2}dx)})\\
    \leq&C(\|\vec{f}\|_{C^{\alpha}(B_{1}\setminus S)}+\|g\|_{L^{p}(B_{1}\setminus S,\rho\xi^{2}dx)})+\|w\|_{L^{2}(B_{1}\setminus S,\xi^{2}dx)}).
\end{align*}
Besides, it follows that the sequence of linear ``polynomials'' $L_{k}$ converges to $L$ with
\begin{equation*}
    \|L\|_{C^{0,1}(B_{1}\setminus S)}\leq C(\|\vec{f}\|_{C^{\alpha}(B_{1}\setminus S)}+\|g\|_{L^{p}(B_{1}\setminus S,\rho\xi^{2}dx)})+\|w\|_{L^{2}(B_{1}\setminus S,\xi^{2}dx)}).
\end{equation*}
$L$ is a $C^{1,\alpha}$ approximation of $w$ because $\sigma_{k}$'s are bounded.

Finally, if $\ds\lim_{x\to0}\vec{f}\neq0$, then we can subtract a linear function from $w$ so that the remainder $w'$ satisfies $\mathrm{div}(\xi^{2}A\cdot\nabla w')=\mathrm{div}(\xi^{2}\vec{f}')$ with $\vec{f}'(0)=0$. Now we are reduced to the first case.
\end{proof}
\begin{remark}\label{absorb remark}
    If the right-hand side of \eqref{main again} has a term $\ds\frac{\xi}{\sqrt{\rho}}h$, where
    \begin{equation*}
        (h-c\sqrt{\rho}\frac{\partial\xi}{\partial x_{n}})=(h-c\frac{\xi}{2\sqrt{\rho}})\in C^{\alpha}(0)\cap O(|x|^{\alpha}),\quad(\mbox{i.e. }\frac{h-c\frac{\xi}{2\sqrt{\rho}}}{|x|^{\alpha}}\mbox{ is bounded}),
    \end{equation*}
    for some constant $c$, then it could be absorbed into $\mathrm{div}(\xi^{2}\vec{f})$ by setting
    \begin{equation}\label{eq. absorb construction}
        \vec{f}(x):=g(x^{T},\xi,\eta)\frac{\partial}{\partial\xi},\quad\mbox{where }\frac{\partial}{\partial\xi}=2\xi \vec{e}_{n}+2\eta \vec{e}_{n+1}.
    \end{equation}
    Here $g(x^{T},\xi,\eta)$ is expressed as the following integral:
    \begin{equation*}
        g(x^{T},\xi,\eta)=\frac{1}{(\xi^{2}+\eta^{2})\xi^{2}}\int_{0}^{\xi}t\sqrt{t^{2}+\eta^{2}}h(x^{T},t,\eta)dt.
    \end{equation*}
    In fact, we have
    \begin{align*}
        \mathrm{div}(\xi^{2}g\frac{\partial}{\partial\xi})=&\mathrm{div}(2\xi^{3}g\vec{e}_{n}+2\xi^{2}\eta g\vec{e}_{n+1})=\frac{\partial}{\partial x_{n}}(2\xi^{3}g)+\frac{\partial}{\partial x_{n+1}}(2\xi^{2}\eta g)\\
        =&\Big(\frac{\xi}{2\rho}\frac{\partial}{\partial\xi}(2\xi^{3}g)-\frac{\eta}{2\rho}\frac{\partial}{\partial\xi}(2\xi^{3}g)\Big)+\Big(\frac{\eta}{2\rho}\frac{\partial}{\partial\xi}(2\xi^{2}\eta g)+\frac{\xi}{2\rho}\frac{\partial}{\partial\xi}(2\xi^{2}\eta g)\Big)\\
        =&\xi^{2}\frac{\partial g}{\partial\xi}+\frac{4\xi^{3}+2\xi\eta^{2}}{\rho}g=\frac{1}{\rho}\frac{\partial}{\partial\xi}(\rho\xi^{2}g)=\frac{\xi}{\sqrt{\rho}}h(x^{T},\xi,\eta).
    \end{align*}
    Moreover, it could be verified that $\ds\int_{0}^{\xi}t\sqrt{t^{2}+\eta^{2}}dt$ is of order $\xi^{2}\sqrt{\rho}$, so if $h(x)$ is $L^{\infty}$ in the $x$-variable, then $\vec{f}(x)$ constructed in \eqref{eq. absorb construction} is $L^{\infty}$. Moreover, if $(h-c\frac{\xi}{2\sqrt{\rho}})\in C^{\alpha}(0)\cap O(|x|^{\alpha})$, by decomposing $h$ into $(h-c\frac{\xi}{2\sqrt{\rho}})$ and $c\frac{\xi}{2\sqrt{\rho}}$, we see that
    \begin{equation*}
        \vec{f}=c_{1}(\frac{\xi^{2}}{\rho}\vec{e}_{n}+\frac{\xi\eta}{\rho}\vec{e}_{n+1})+\vec{f}_{2},\quad\mbox{where }\vec{f}_{2}\in C^{\alpha}(0)\cap O(|x|^{\alpha}).
    \end{equation*}
    Notice that $\frac{\xi^{2}}{\rho}\vec{e}_{n}+\frac{\xi\eta}{\rho}\vec{e}_{n+1}$ is a linear combination of $\vec{e}_{n}$ and $\nabla\rho$, then we have that $\vec{f}\in C^{\alpha}(B_{1},0)$ as defined in Definition~\ref{def. 1.6}.
\end{remark}

\begin{remark}\label{xiphi on the RHS}
If $\alpha<1/2$, then the right-hand side of \eqref{main again} can also have a term $\xi\phi$, where $\phi\in L^{\infty}$. This is because when $g=\phi/\xi$, then $\xi\phi=\xi^{2}g$, and $g$ satisfies the weakened assumption \eqref{g weaken}.
\end{remark}

\section{Estimate of ratio \texorpdfstring{$v=u/\xi$}{Lg}}
\subsection{Equivalent descriptions of property \texorpdfstring{$(\mathcal{F})$}{Lg}}
The property $(\mathcal{F}_{A})$, or $(\mathcal{F})$ if the matrix $A$ is known, given in the introduction is easier for the readers to understand, but with it only it's hard to do estimate, so we provide a few parallel properties.

In $\mathbb{R}^{n+1}\setminus S$, we let $Cone_{r}(x^{T},0)$ (or simply $Cone_{r}(x^{T})$) be a cone of slope $1$, radius $r>0$, that is centered at $(x^{T},0)$. More explicitly,
\begin{equation}
    Cone_{r}(x^{T}):=\{y=(y^{T},y^{\perp}): |y^{\perp}|\leq r, |y^{T}-x^{T}|\leq|y^{\perp}|\}\setminus S.
\end{equation}
We say $f(x)$ defined in $B_{R}\setminus S$ satisfies properties $(\mathcal{F}_{1})$, $(\mathcal{F}_{2})$ or $(\mathcal{F}_{3})$ if:
\begin{itemize}
    \item[$(\mathcal{F}_{1})$] There exists a constant $C$, so that for every $Cone_{r}(x^{T})\in B_{R}\setminus S$,
    \begin{equation*}
        \Big\|\frac{f(y)}{\bar{u}_{A(x^{T})}(y)}\Big\|_{C^{\alpha}(Cone_{r}(x^{T})\setminus Cone_{r/2}(x^{T}))}\leq C,\quad\mbox{w.r.t. path distance}\ dist(\cdot,\cdot).
    \end{equation*}
    \item[$(\mathcal{F}_{2})$] There exists a constant $C$, so that for every $Cone_{r}(x^{T})\in B_{R}\setminus S$,
    \begin{equation*}
        \Big\|\frac{f(y)}{\bar{u}_{A(x^{T})}(y)}\Big\|_{C^{\alpha}(Cone_{r}(x^{T}))}\leq C,\quad\mbox{w.r.t. path distance}\ dist(\cdot,\cdot).
    \end{equation*}
    \item[$(\mathcal{F}_{3})$] There exists a constant $C$, so that for every $B_{2r}(x^{T})\subseteq B_{R}$, $\ds\Big|\frac{f(y)}{\bar{u}_{A(x^{T})}(y)}\Big|\leq C$ in $B_{r}(x^{T})$, and for every pair $y\in B_{r}(x^{T})$, $z\in Cone_{r}(x^{T})$, we have
    \begin{equation*}
        \Big|\frac{f(y)}{\bar{u}_{A(x^{T})}(y)}-\frac{f(z)}{\bar{u}_{A(x^{T})}(z)}\Big|\leq C\cdot dist(y,z)^{\alpha}.
    \end{equation*}
\end{itemize}
Again, $\bar{u}_{x^{T}}$ is an abbreviation of $\bar{u}_{A(x^{T})}$. Just like $(\mathcal{F})$, these three properties are also defined when assuming $A$ is known on $\mathbb{R}^{n-1}$.

If $\lambda I\leq A\leq\Lambda I$ and $[A]_{C^{\alpha}}$ is small, then the properties $(\mathcal{F})$, $(\mathcal{F}_{1})$, $(\mathcal{F}_{2})$, $(\mathcal{F}_{3})$ are equivalent up to a shrinking of radius, meaning that for example, if $f(x)$ has property $(\mathcal{F})$ in $B_{R}$, then it also has property $(\mathcal{F}_{1})$ in $B_{R/100}$. The proof uses some similar technique like in \cite{JV23} and is postponed to section~\ref{equivalence}.
\subsection{H\"older estimate} In this section, we study the H\"older continuity of \eqref{uniform}. We state the equation again,
\begin{equation}\label{uniform again}
    \mathrm{div}(A\nabla u)=\mathrm{div}(\frac{\vec{f}}{\sqrt{\rho}})\ \mbox{in}\ B_{1}\setminus S,\quad u\Big|_{S}=0.
\end{equation}
We start with the following Hardy inequality:
\begin{lemma}\label{lem. hardy}
    If $u\in C^{\infty}_{loc}(B_{r}/S)$ vanishes on the slit $S$, then for $v=u/\xi$, we have
\begin{equation*}
    \int_{B_{r}\setminus S}\frac{v^{2}}{\rho}dx=\int_{B_{r}\setminus S}\frac{u^{2}}{\xi^{2}\rho}dx\leq C\int_{B_{r}\setminus S}|\nabla u|^{2}dx.
\end{equation*}
\end{lemma}
\begin{proof}
Instead of using the $x$-coordinate, we use the $(x^{T},\xi,\eta)$-coordinate mentioned in \eqref{complex cordinate}. It follows that
\begin{align*}
    \int_{B_{r}\setminus S}|\nabla u|^{2}dx\geq&\int_{B_{r}\setminus S}|\nabla^{\perp}u|^{2}dx\geq\int_{\Omega}|\partial_{\xi}u|^{2}dx^{T}d\xi d\eta,\\
    \int_{B_{r}\setminus S}\frac{u^{2}}{\xi^{2}\rho}dx=&\int_{\Omega}\frac{u^{2}}{\xi^{2}}dx^{T}d\xi d\eta,
\end{align*}
where $\Omega=\{(x^{T},\xi,\eta):\xi>0,|x^{T}|^{2}+(\xi^{2}+\eta^{2})^{2}\leq r^{2}\}$ corresponds to the $B_{r}\setminus S$ region in $(x^{T},\xi,\eta)$-coordinate. We fix $x^{T}$ and $\eta$, and write $u(\xi)=u(x^{T},\xi,\eta)$, then it suffices to find a constant $C$ independent of $(x^{T},\eta)$ and the bound $A$, so that for all $u(\xi)\in H^{1}_{loc}(\mathbb{R}_{+})$ vanishing at $0$, we have
\begin{equation*}
C\int_{0}^{A}|u'|^{2}d\xi\geq\int_{0}^{A}\frac{u^{2}}{\xi^{2}}d\xi.
\end{equation*}
The strategy is to show a $W^{1,1}$ Hardy inequality $\ds\int_{0}^{A}\frac{|u'|}{\xi}d\xi\geq\int_{0}^{A}\frac{|u|}{\xi^{2}}d\xi$ and then replace $u$ with $u^{2}$, just like the method in \eqref{regular poincare}. For simplicity, we assume $w\in C^{\infty}$, then the $W^{1,1}$ Hardy inequality follows from changing the order of integration:
\begin{align*}
    \int_{0}^{A}\frac{|u(\xi)|}{\xi^{2}}d\xi\leq\int_{0}^{A}\frac{1}{\xi^{2}}\int_{0}^{\xi}|u'(h)|dhd\xi=\int_{0}^{A}\int_{h}^{A}\frac{1}{\xi^{2}}|u'(h)|d\xi dh\leq\int_{0}^{A}\frac{|u'(h)|}{h}dh.
\end{align*}
\end{proof}
Now let us prove the following H\"older estimate in the average sense:
\begin{lemma}\label{Holder average}
Assume that $A^{ij}(0)=\delta^{ij}$, $[A^{ij}]_{C^{\alpha}(B_{1}^{+})}\leq\varepsilon_{0}$, and $u\in L^{2}(B_{1}\setminus S,dx)\cap H^{1}_{loc}$ vanishing at $S$ is an $x_{n+1}$-even solution of \eqref{uniform again} with $\vec{f}(0)=0$. Given that $\varepsilon_{0}$ is small, there exist two constants $\bar{c}$ and $C=C(n,\alpha)$ (here, $C(n,\alpha)$ is universal while $\bar{c}$ depends on $u$) such that
\begin{equation}\label{average holder formula}
    |\bar{c}|^{2}+\frac{1}{r^{n+2\alpha}}\int_{B_{r}\setminus S}\frac{|u/\xi-\bar{c}|^{2}}{\rho}dx\leq C\Big\{\|\vec{f}\|_{C^{\alpha}(B_{1},0)}^{2}+\|u\|_{L^{2}(B_{1}\setminus S,dx)}^{2}\Big\}.
\end{equation}
Here, the norm $\|\vec{f}\|_{C^{\alpha}(B_{1},0)}$ is the same as that in Proposition~\ref{gammaschauder}.
\end{lemma}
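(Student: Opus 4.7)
Lemma~\ref{Holder average} is the $C^\alpha$-counterpart of Proposition~\ref{gammaschauder} tailored to the non-degenerate equation \eqref{uniform again}, and I would prove it by mimicking the iteration scheme of Section~\ref{only average}, with the role played there by linear ``polynomials'' now taken by constant multiples of $\xi$. At each scale $\lambda^k$ I would approximate $u_k := u - (\sum_{j<k} c_j)\xi$ by a harmonic replacement $h_k$, read off the next constant $c_k := (h_k/\xi)(0)$, and repeat; the candidate $\bar c$ will be $\sum_k c_k$.

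\textbf{The two building blocks.} The iteration rests on two statements parallel to Lemmas~\ref{harmonic replacement} and~\ref{constant coefficient}. The first is an energy-type approximation: given $u$ solving \eqref{uniform again} on $B_r\setminus S$, let $h\in H^1(B_{r/2}\setminus S)$ satisfy $\Delta h=0$ in $B_{r/2}\setminus S$ with $h|_S=0$ and $h=u$ on $\partial B_{r/2}\setminus S$. Testing $u-h$ against itself, using the standard Poincar\'e inequality together with $[A]_{C^\alpha}\le\varepsilon_0$ and $|\vec f(x)|\le\|\vec f\|_{C^\alpha(B_1,0)}|x|^\alpha$ (recall $\vec f(0)=0$), should yield
\begin{equation*}
\int_{B_{r/2}\setminus S}|u-h|^2\,dx \;\le\; C\Big\{\varepsilon_0^2 r^{2\alpha}\!\!\int_{B_r\setminus S}\!\!|u|^2\,dx \;+\; \|\vec f\|_{C^\alpha(B_1,0)}^{2}\, r^{n+2\alpha}\Big\}.
\end{equation*}
The second is a blow-up classification for $h$: since $h$ is $x_{n+1}$-even, harmonic, and vanishes on $S$, Lemma~\ref{harmonic functions} gives $h\in C^6$ in $(x^T,\xi,\eta)$-coordinates, hence $h/\xi \in C^5$ there. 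Setting $c=(h/\xi)(0)$, a Taylor expansion together with the simple computation $\int_{B_r}\xi^2/\rho\,dx\le C r^{n+1}$ gives
\begin{equation*}
\int_{B_r\setminus S}\frac{|h-c\xi|^2}{\rho}\,dx \;\le\; C r^{n+2}\int_{B_{r/2}\setminus S}\frac{h^2}{\rho}\,dx,
\end{equation*}
a decay rate comfortably better than the target $r^{n+2\alpha}$ for any $\alpha<1/2$.

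\textbf{Iteration and main obstacle.} Because $\Delta\xi=0$ in $\mathbb{R}^{n+1}\setminus S$ but $\mathrm{div}(A\nabla\xi)=\mathrm{div}((A-I)\nabla\xi)$ does not vanish, the equation for $u_{k+1}$ gains a new forcing term $-c_k\,\mathrm{div}((A-I)\nabla\xi)$. I would absorb this as $\mathrm{div}(\vec g_k/\sqrt\rho)$ with $\vec g_k := -c_k\sqrt\rho\,(A-I)\nabla\xi$; since $|\sqrt\rho\,\nabla\xi|\equiv \tfrac12$ is bounded and $(A-I)(0)=0$ with $|(A-I)(x)|\le \varepsilon_0|x|^\alpha$, the updated forcing $\vec f_{k+1}:=\vec f_k+\vec g_k$ still vanishes at the origin and its norm obeys $\|\vec f_{k+1}\|_{C^\alpha(B_{\lambda^k},0)}\le \|\vec f_k\|_{C^\alpha(B_{\lambda^k},0)}+C\varepsilon_0|c_k|$. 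Tracking the three scale-invariant quantities $\sigma_k,\phi_k,\gamma_k$ as in the proof of Proposition~\ref{gammaschauder} (with $\sigma_k^2$ measuring a rescaled $L^2$-norm of $u_k$ and $|c_k|\le C(\sigma_k+\phi_k)$), combining the two building blocks yields a contraction inequality with factor $C(n)\lambda^{2-2\alpha}$. Choosing $\lambda=\lambda(n,\alpha)$ small (and $\varepsilon_0$ smaller still) forces $\sigma_k$ to decay geometrically and $\sum|c_k|$ to converge, whence $\bar c := \sum_k c_k$ satisfies the claimed estimate. The principal technical hurdle is verifying that the absorption step truly keeps $\vec f_{k+1}$ inside the class $\{\vec g \in C^\alpha: \vec g(0)=0\}$ with a manageable $C^\alpha$-increment; this is where the uniform boundedness of $\sqrt\rho\,\nabla\xi$ and the pointwise bound $|(A-I)(x)|\le\varepsilon_0|x|^\alpha$ conspire to produce no new ``linear part'' and only a $C\varepsilon_0|c_k|$ contribution to $\phi_k$, exactly mirroring the analogous bookkeeping in Proposition~\ref{gammaschauder}.
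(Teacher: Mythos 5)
Your proposal follows essentially the same Campanato-type iteration as the paper's proof: subtract successive multiples of $\xi$, perform a harmonic replacement, read off the next constant from the smooth quotient of the replacement by $\xi$, and absorb the new forcing $-c_k\,\mathrm{div}((A-I)\nabla\xi)$ into $\vec f_k$ with the same $C\varepsilon_0|c_k|$ bookkeeping that the paper records in \eqref{holder induction 1}. This is the right scheme and captures the key ideas.

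Two small points worth flagging. First, you never invoke the Hardy-type inequality \eqref{hardy}: your iteration produces decay of $\int_{B_{\lambda^k}}|u-c_k\xi|^2\,dx$, but the statement \eqref{average holder formula} is for $\int_{B_r}\rho^{-1}|u/\xi-\bar c|^2\,dx$, and the paper passes from one to the other via the Caccioppoli inequality \eqref{standard caccioppoli} followed by \eqref{hardy}; without that bridge the argument does not quite close as written (your two ``building blocks'' are stated in different weighted norms, an unweighted $L^2$ for (a) and a $\rho^{-1}$-weighted one for (b), and you would need to reconcile them). Second, the exponents in your building blocks need correction: (a) should carry an extra $r^2$ from the Poincar\'e step, i.e. $C\bigl\{\varepsilon_0^2 r^{2+2\alpha}\int_{B_r}|u|^2 + \|\vec f\|^2 r^{n+2+2\alpha}\bigr\}$, and in (b) the factor should read $\mu^{n+3}$ for a fixed shrinking ratio $\mu\le 1/2$ rather than $r^{n+2}$ with the same $r$ on both sides. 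Neither exponent slip affects the scheme, since both comfortably beat the target $r^{n+2\alpha}$ for $\alpha<1/2$, but the Hardy-inequality step is genuinely needed.
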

\begin{proof}
    We write $v=u/\xi$, and also denote sequences $u_{k}=u-c_{k}\xi$ and $v_{k}=u_{k}/\xi$, where $c_{k}$'s are constants yet to be decided. It follows that $u_{k}$ satisfies the equation
    \begin{equation*}
        \mathrm{div}(A\nabla u_{k})=\mathrm{div}(\frac{\vec{f}_{k}}{\sqrt{\rho}}),\quad\vec{f}_{k}=\vec{f}-c_{k}\sqrt{\rho}(A-\delta^{ij})\nabla\xi.
    \end{equation*}
    As $\ds|\nabla\xi|=\frac{1}{2\sqrt{\rho}}$, we see that $|f_{k}(x)|\leq\phi_{k}|x|^{\alpha}$, where
    \begin{equation}\label{holder induction 1}
        \phi_{k}\leq C(\|\vec{f}\|_{C^{\alpha}(B_{1},0)}+\varepsilon_{0}|c_{k}|).
    \end{equation}

    As $u_{k}$ vanishes at $S$, we obtain the following estimate from Lemma~\ref{lem. hardy}:
\begin{equation}\label{hardy}
    \int_{B_{r}\setminus S}\frac{v_{k}^{2}}{\rho}dx=\int_{B_{r}\setminus S}\frac{u_{k}^{2}}{\xi^{2}\rho}dx\leq C\int_{B_{r}\setminus S}|\nabla u_{k}|^{2}dx.
\end{equation}
Besides, the Caccioppoli inequality for uniform elliptic equations implies that
\begin{equation}\label{standard caccioppoli}
    \int_{B_{r}\setminus S}|\nabla u_{k}|^{2}dx\leq C\int_{B_{2r}\setminus S}(\frac{u_{k}^{2}}{r^{2}}+\frac{|\vec{f}_{k}|^{2}}{\rho})dx.
\end{equation}
    
    We make the following claim:
    \begin{itemize}
        \item Claim: there exists a converging sequence $c_{k}$ and some $\lambda<1$, so that
    \begin{equation*}
        \int_{B_{\lambda^{k}}\setminus S}u_{k}^{2}dx\leq C\lambda^{k(n+2+2\alpha)},
    \end{equation*}
    \end{itemize}
    If the claim in correct, then we can use \eqref{hardy} and \eqref{standard caccioppoli} to get \eqref{average holder formula} for $\ds\bar{c}=\lim_{k\to\infty}c_{k}$.

    The remaining part is to prove the claim above. We follow a similar Campanato iteration like in Proposition~\ref{gammaschauder}. Without loss of generality, we assume that $\|\vec{f}\|_{C^{\alpha}(B_{1},0)}=\|u\|_{L^{2}(B_{1}\setminus S,dx)}=1$. We let $c_{0}=0$, $\ds\varepsilon_{0}^{\frac{2}{n+4}}\leq\lambda\leq1/4$, and define
\begin{equation*}
    \sigma_{k}^{2}=\frac{1}{\lambda^{k(n+2+2\alpha)}}\int_{B_{\lambda^{k}}\setminus S}u_{k}^{2}dx.
\end{equation*}
Clearly we have $\sigma_{0}=1$. By \eqref{standard caccioppoli} we have that
\begin{equation*}
    \int_{B_{\lambda^{k}/2}\setminus S}|\nabla u_{k}|^{2}\leq C\lambda^{k(n+2\alpha)}(\sigma_{k}^{2}+\phi_{k}^{2}),
\end{equation*}
where $\phi_{k}\leq C(1+\varepsilon_{0}|c_{k}|)$. Let $H_{k}$ be a harmonic replacement obtained using the Lax-Milgram lemma or energy minimizing method, so that it vanishes on $S$ in trace sense and
\begin{equation*}
    \Delta H_{k}=0\ \mbox{in}\ B_{\lambda^{k}/2}\setminus S,\quad H_{k}\Big|_{\partial B_{\lambda^{k}/2}\setminus S}=u_{k}.
\end{equation*}
By Lemma~\ref{harmonic functions} plus that $H_{k}$ vanishes on $S$, we can set $\ds c_{k+1}=c_{k}+\lim_{x\to0}\frac{H_{k}}{\xi}$, and
\begin{equation*}
    |c_{k}-c_{k+1}|^{2}+\frac{1}{\lambda^{n+4}r^{n+4}}\int_{B_{\lambda r}\setminus S}|H_{k}+c_{k}\xi-c_{k+1}\xi|^{2}\leq\frac{C}{r^{n+2}}\int_{B_{r}\setminus S}|H_{k}|^{2}
\end{equation*}
for all $\lambda<1$ if $H_{k}$ is $x_{n+1}$-even. It follows that
\begin{equation}\label{holder induction 2}
    |c_{k+1}|\leq|c_{k}|+C\lambda^{k\alpha}(\sigma_{k}+\phi_{k})
\end{equation}
As $u_{k+1}=(u_{k}-H_{k})+(H_{k}-c_{k+1}\xi+c_{k}\xi)$, we have
\begin{equation}\label{holder induction 3}
    \sigma_{k+1}^{2}\leq C(n)\lambda^{2-2\alpha}\sigma_{k}^{2}+\frac{C(n)}{\lambda^{n+2+2\alpha}}\phi_{k}^{2}.
\end{equation}
The iterative system \eqref{holder induction 1}\eqref{holder induction 2}\eqref{holder induction 3}, if we further assume $C(n)\lambda^{2-2\alpha}\leq1/2$, will imply the boundedness of $\sigma_{k}$ and convergence of $c_{k}$. Such a $\lambda$ which simultaneously satisfies $\ds\varepsilon_{0}^{\frac{2}{n+4}}\leq\lambda\leq1/4$ mentioned before exists, if $\varepsilon_{0}$ is small enough. This proves the claim.
\end{proof}

As a consequence of Lemma~\ref{Holder average}, we have the following interior regularity of $u/\xi$.
\begin{theorem}\label{Holder estimate}
Under the same assumption of Lemma~\ref{Holder average}, and further assuming that $\vec{f}$ is $C^{\alpha}$ inside $B_{1}$ and $\vec{f}(x)=0$ for all $x\in\mathbb{R}^{n-1}$, then $u(x)$ has the property $(\mathcal{F})$ in $B_{1/2}$, and $\sqrt{\rho}\nabla(u-c\xi)$ belongs to $C^{\alpha}(0)\cap O(|x|^{\alpha})$ in $B_{1/2}\setminus S$ (recall Remark~\ref{absorb remark}) for some constant $c$.
\end{theorem}
\begin{proof}
In each conic annulus $Cone_{r}\setminus Cone_{r/2}$, whose closure is contained in $B_{2r}\setminus B_{r/4}$, \eqref{uniform} is a non-degenerate equation, and we infer from \eqref{average holder formula} that
\begin{equation}\label{eq. 4.9}
    r^{-\frac{n+1}{2}}\|\tilde{u}\|_{L^{2}(Cone_{r}\setminus Cone_{r/2})}\leq r^{\frac{1}{2}+\alpha},\quad \tilde{u}=u-c\xi
\end{equation}
for some coefficient $c$. Then, by the standard boundary Schauder estimate, we have that under the distance function $dist(\cdot,\cdot)$,
    \begin{equation*}
        [u]_{C^{1,\alpha}(Cone_{r}\setminus Cone_{r/2})}\leq C r^{-1/2}
    \end{equation*}
    for some $C$ independent of $r$. Then, we see
    \begin{equation*}
        [\sqrt{\rho}\nabla\tilde{u}]_{C^{\alpha}(Cone_{r}\setminus Cone_{r/2})}\leq C,
    \end{equation*}
    so $[\sqrt{\rho}\nabla\tilde{u}]_{C^{\alpha}(Cone_{3/4})}\leq C$. Moreover, by \eqref{eq. 4.9}, we see that the limit of $\sqrt{\rho(x)}\nabla\tilde{u}(x)$ when $x$ tends to $0$ inside the cone $Cone_{1}$ is equal to $0$. For every $y\in B_{1/4}$, we construct
    \begin{equation*}
        z=y^{T}+\frac{|y^{T}|}{|y^{\perp}|}y^{\perp},
    \end{equation*}
    then similar to the computation in Lemma~\ref{sheaf},
    \begin{align}\label{sqrt rho nabla u}
        &\sqrt{\rho(y)}\nabla\tilde{u}(y)-\sqrt{\rho(z)}\nabla\tilde{u}(z)\\\notag
        =&\Big(\sqrt{\rho_{\kappa_{y^{T}}}(y)}\nabla\tilde{u}(y)-\sqrt{\rho_{\kappa_{y^{T}}}(z)}\nabla\tilde{u}(z)\Big)+\Big(\sqrt{\rho_{\kappa_{y^{T}}}(z)}-\sqrt{\rho(z)}\Big)\nabla\tilde{u}(z),
    \end{align}
    where
    \begin{equation*}
        \rho_{\kappa_{y^{T}}}=\sqrt{x_{n}^{2}+\kappa_{y^{T}}^{2}x_{n+1}^{2}},\quad\kappa_{y^{T}}=(\frac{\bar{a}(y^{T})^{nn}}{\bar{a}(y^{T})^{n+1,n+1}})^{1/2}.
    \end{equation*}
    In fact, since $z^{\perp}$ is parallel to $y^{\perp}$, the first term in \eqref{sqrt rho nabla u} is invariant if we add a $\frac{1}{2}$-order homogeneous function (for example, $c\xi$ or $c\xi_{\kappa_{y^{T}}}$) to $\tilde{u}$.
    
    Consequently, by combining \eqref{sqrt rho nabla u} with $[\sqrt{\rho}\nabla\tilde{u}]_{C^{\alpha}(Cone_{3/4})}\leq C$, we conclude that $\sqrt{\rho}\nabla\tilde{u}\in C^{\alpha}(0)\cap O(|x|^{\alpha})$.
    
    Besides, as $u\Big|_{S}=0$ and
    \begin{equation*}
        \xi\sim\frac{|x_{n+1}|}{\sqrt{\rho}}\quad\mbox{in }(Cone_{r}\setminus Cone_{r/2})\cap\{x_{n}<0\},
    \end{equation*}
    we conclude that $[u/\xi]_{C^{\alpha}(Cone_{r}\setminus Cone_{r/2})}\leq C$ with respect to $dist(\cdot,\cdot)$, which means $u$ has property $(\mathcal{F}_{1})$. As $(\mathcal{F}_{1})$ is ``equivalent'' to $(\mathcal{F})$ up to a shrinking of radius, see Appendix~\ref{equivalence}, we have finished the proof of Theorem~\ref{Holder estimate}.
\end{proof}

\subsection{Non-degeneracy}
We can use a similar method to prove that the growth rate of $u$ is exactly proportional to $\xi$ as long as $u$ is close to $\xi$ in $L^{2}$ sense.
\begin{proposition}\label{hopf}
    Assume that $A(0)=\delta^{ij}$ and $[A]_{C^{\alpha}(B_{1}\setminus S)}\leq\varepsilon_{0}$. $\vec{f}\Big|_{\mathbb{R}^{n-1}}=0$ and $[\vec{f}]_{C^{\alpha}(B_{1}\setminus S)}\leq\varepsilon_{0}$. $u$ is an $x_{n+1}$-even solution of \eqref{uniform again} with zero trace at $S$ and satisfies $\|u-\xi\|_{L^{2}(B_{1}\setminus S)}\leq\varepsilon_{0}$. Given that $\varepsilon_{0}$ is small enough, then we have
    \begin{equation}
        \inf_{B_{1/8}\setminus S}\frac{u}{\xi}\geq\frac{1}{2}.
    \end{equation}
\end{proposition}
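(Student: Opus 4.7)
The plan is to proceed by contradiction and compactness, using Theorem \ref{Holder estimate} as the main regularity input. Suppose the conclusion fails: then there exist $\varepsilon_k\to 0$ and data $(A_k,u_k,\vec{f}_k)$ satisfying the hypotheses of Proposition \ref{hopf} with $\varepsilon_0$ replaced by $\varepsilon_k$, yet such that $\inf_{B_{1/8}\setminus S} u_k/\xi<1/2$. In particular $A_k\to I$ uniformly on $B_1$; since $\vec{f}_k$ vanishes on $\mathbb{R}^{n-1}$ and has $C^\alpha$ seminorm at most $\varepsilon_k$, we also have $\vec{f}_k\to 0$ uniformly; and $u_k\to\xi$ in $L^2(B_1\setminus S)$.

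For all large $k$, Theorem \ref{Holder estimate} applies to $u_k$ and yields a factorization $u_k=\bar u_{A_k(x^T)}\,h_k$ in $B_{1/2}$, where $h_k$ has $C^{\alpha}$ norm with respect to path distance bounded by $C(\|\vec{f}_k\|_{C^\alpha}+\|u_k\|_{L^2})\leq C$, a constant independent of $k$. I would then pass to the $(x^T,\xi,\eta)$ coordinates of \eqref{complex cordinate}, in which $S$ becomes the smooth hyperplane $\{\xi=0\}$ and path distance is equivalent to Euclidean distance. In these coordinates the family $\{h_k\}$ is uniformly bounded and uniformly Hölder continuous on the compact image of $\overline{B_{1/4}}\setminus S$, so Arzelà–Ascoli produces a subsequence with $h_k\to h_\infty$ uniformly.

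To identify the limit, the explicit formula $\bar u_A(x_n,x_{n+1})=\xi(x_n,\kappa(A)x_{n+1})$ combined with $\kappa(A_k(x^T))\to 1$ uniformly in $x^T$ gives $\bar u_{A_k(x^T)}/\xi\to 1$ uniformly on $\overline{B_{1/4}}\setminus S$. Therefore $u_k=\bar u_{A_k(x^T)}h_k\to \xi\,h_\infty$ uniformly there, while simultaneously $u_k\to\xi$ in $L^2$. Uniqueness of the $L^2$-limit forces $\xi h_\infty=\xi$ almost everywhere, so by continuity $h_\infty\equiv 1$ on $B_{1/4}\setminus S$. Combined with $\bar u_{A_k(x^T)}/\xi\to 1$, this yields $u_k/\xi\to 1$ uniformly on $\overline{B_{1/8}}\setminus S$; for $k$ large this gives $u_k/\xi\geq 1/2$ there, contradicting the choice of the sequence.

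The main obstacle is making the equicontinuity of $h_k$ genuinely usable near the slit, where Euclidean distance degenerates the geometry. The path-distance formulation of property $(\mathcal{F})$ delivered by Theorem \ref{Holder estimate} is tailored for exactly this situation, and the change of variables to $(x^T,\xi,\eta)$ converts it into standard Hölder continuity on a region with smooth boundary, after which Arzelà–Ascoli may be applied without subtlety. The only other point that requires care is verifying that the Hölder constant from Theorem \ref{Holder estimate} is controlled uniformly in $k$; this is immediate from the explicit dependence in the Schauder-type bounds of Lemma \ref{Holder average}.
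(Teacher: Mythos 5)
Your proposal is correct, but it takes a genuinely different route from the paper.

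The paper argues directly and quantitatively: fixing an arbitrary center $x^{T}\in\mathbb{R}^{n-1}\cap B_{1/2}$, it subtracts the local model $\bar{u}_{x^{T}}$ and observes that $\tilde{u}=u-\bar{u}_{x^{T}}$ solves \eqref{uniform again} with a right-hand side $\vec{f}'$ that vanishes at $x^{T}$ and has $C^{\alpha}$ seminorm of order $\varepsilon_{0}$; then it applies the method of Theorem \ref{Holder estimate} to obtain $\left\|\frac{u-\bar{u}_{x^{T}}}{\bar{u}_{x^{T}}}\right\|_{C^{\alpha}(Cone_{1/4}(x^{T}))}\leq C\varepsilon_{0}$, and for a general point $y\in B_{1/8}$ it constructs the cone projection $z$ as in \eqref{project to cone} and chains a triangle inequality between the cone at the origin and the cone at $y^{T}$ to conclude $|u/\xi-1|\leq C\varepsilon_{0}\leq 1/2$. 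This produces an explicit $L^{\infty}$ bound of order $\varepsilon_{0}$, a stronger, effective statement. Your compactness approach extracts only the qualitative conclusion, but it is valid: the crucial uniform-in-$k$ bound on $\|h_{k}\|_{C^{\alpha}}$ (w.r.t.\ path distance) is not stated explicitly in Theorem \ref{Holder estimate}, but it does follow from the proof, as you note, since the constants there depend only on $\|\vec{f}\|_{C^{\alpha}}$, $\|u\|_{L^{2}}$, and $[A]_{C^{\alpha}}$, all of which are under control along the sequence; and the Lipschitz-in-$\theta$, $\kappa$-continuity of $\bar{u}_{A}/\xi$ emphasized in the introduction does give uniform convergence $\bar{u}_{A_{k}(x^{T})}/\xi\to 1$. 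One imprecision: path distance and Euclidean distance in the $(x^{T},\xi,\eta)$-chart are \emph{not} equivalent near the tip (infinitesimally $|dx^{\perp}|=2\sqrt{\rho}\,|d(\xi,\eta)|$, so path-distance is much smaller than $(\xi,\eta)$-Euclidean distance when $\rho$ is small); however the inequality you actually need is $dist(y,z)\leq C\,|y-z|_{(\xi,\eta)}$ on a compact set, which holds, so $C^{\alpha}$ in path distance still implies $C^{\alpha}$ in the $(\xi,\eta)$-metric and Arzel\`a--Ascoli applies. So your argument is sound; the paper's direct proof is preferable when a quantitative rate in $\varepsilon_{0}$ is useful, but both prove the proposition.
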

\begin{proof}
    For every point $x^{T}\in\mathbb{R}^{n-1}\cap B_{1/2}$, as $|A(x^{T})-\delta^{ij}|\leq\varepsilon_{0}$, the corresponding homogeneous solution $\bar{u}_{x^{T}}=\bar{u}_{A(x^{T})}$ is close to $\xi$ in $L^{2}$ sense, so
    \begin{equation*}
        \|u-\bar{u}_{x^{T}}\|_{L^{2}(B_{1/2}(x^{T})\setminus S)}\leq\|u-\xi\|_{L^{2}(B_{1/2}(x^{T})\setminus S)}+\|\xi-\bar{u}_{x^{T}}\|_{L^{2}(B_{1/2}(x^{T})\setminus S)}\leq C\varepsilon_{0}.
    \end{equation*}
    At each center $x^{T}$, the function $\tilde{u}=(u-\bar{u}_{x^{T}})$ satisfies
    \begin{equation*}
        \mathrm{div}(A\nabla\tilde{u})=\mathrm{div}\Big(\frac{\vec{f}}{\sqrt{\rho}}+(\bar{A}-A)\nabla\bar{u}_{\kappa}\Big)=:\mathrm{div}(\frac{\vec{f}'}{\sqrt{\rho}}),
    \end{equation*}
    where $\vec{f}'$ vanishes at $x^{T}$ with $[\vec{f}']_{C^{\alpha}(B_{1/2}(x^{T})\setminus S)}\leq C\varepsilon_{0}$. We repeat the method in Theorem~\ref{Holder estimate} and get that
    \begin{equation*}
        \|\frac{u-\bar{u}_{x^{T}}}{\bar{u}_{x^{T}}}\|_{C^{\alpha}(Cone_{1/4}(x^{T})\setminus S)}\leq C\varepsilon_{0}.
    \end{equation*}
    For every $y\in B_{1/8}$, we construct
    \begin{equation*}
        z=y^{T}+\frac{|y^{T}|}{|y^{\perp}|}y^{\perp}
    \end{equation*}
    and get
    \begin{align*}
        |\frac{u-\xi}{\xi}(y)|\leq&|\frac{u-\xi}{\xi}(z)|+|\frac{u}{\xi}(z)-\frac{u}{\xi}(y)|\\
        =&|\frac{u-\xi}{\xi}(z)|+\frac{\bar{u}_{x^{T}}}{\xi}(y)|\frac{u}{\bar{u}_{x^{T}}}(z)-\frac{u}{\bar{u}_{x^{T}}}(y)|\\
        \leq&\|\frac{u-\xi}{\xi}\|_{C^{\alpha}(Cone_{1/4}\setminus S)}+C(A(x^{T}))\|\frac{u-\bar{u}_{x^{T}}}{\bar{u}_{x^{T}}}\|_{C^{\alpha}(Cone_{1/4}(x^{T})\setminus S)}\\
        \leq&C\varepsilon_{0}.
    \end{align*}
    When $\varepsilon_{0}$ is small, then $\ds|\frac{u-\xi}{\xi}(y)|\leq\frac{1}{2}$ for all $y\in B_{1/8}$, which proves the non-degeneracy assertion.
\end{proof}

\section{\texorpdfstring{$C^{1,\alpha}$}{Lg} boundary Harnack for straight boundary}
With Lemma~\ref{ratio}, plus the Theorem~\ref{Holder estimate} previously obtained, we can prove the $C^{1,\alpha}$ boundary Harnack principle for straight boundary.

\begin{proof}[Proof of Theorem~\ref{HOBH straight}]After a linear coordinate change, we can assume that $A(0)=\delta^{ij}$ and $\bar{u}_{A(0)}=\xi$. In fact, to say $u_{2}/u_{A(0)}\geq c_{0}>0$, this is independent of the homogeneous solution $\bar{u}_{A(0)}$, since the ratio of $\bar{u}_{A(0)}$'s between different $A(0)$'s is bounded from above and below.
\begin{itemize}
    \item Step 1: We first absorb $\phi_{i}$ into the divergence term by
    \begin{equation}\label{phi also absorb}
        \mathrm{div}(\vec{f}_{i})+\phi_{i}(x)=\mathrm{div}\Big(\vec{f}_{i}+\int_{0}^{x_{n+1}}\phi_{i}(x_{1},\cdots,x_{n},h)dh\cdot \vec{e}_{n+1}\Big)=:\mathrm{div}(\vec{f}_{i}').
    \end{equation}
    We write $\ds\vec{f}_{i}'=\frac{\sqrt{\rho}\vec{f}_{i}'}{\sqrt{\rho}}$ so that $\sqrt{\rho}\vec{f}_{i}'\in C^{\alpha}$ and vanishes at $\mathbb{R}^{n-1}$, then by Theorem~\ref{Holder estimate} we have that $u_{i}$'s have the property $(\mathcal{F})$ in $B_{3/4}$, and  $\sqrt{\rho}\nabla(u_{i}-c_{i}\xi)$ are $C^{\alpha}(0)\cap O(|x|^{\alpha})$ in $B_{3/4}$ for some constants $c_{i}$.
    \item Step 2: By Lemma~\ref{ratio}, we know $\ds w=\frac{u_{1}}{u_{2}}$ satisfies \eqref{eq. ratio equation}. Now we write
    \begin{equation*}
        \tilde{A}=(\frac{u_{2}}{\xi})^{2}A,\quad\vec{f}=\frac{u_{2}}{\xi}\frac{\vec{f}_{1}}{\xi}-\frac{u_{1}}{\xi}\frac{\vec{f}_{2}}{\xi},
    \end{equation*}
    and add two additional term mentioned in Remark~\ref{absorb remark} and Remark~\ref{xiphi on the RHS}:
    \begin{equation*}
        h=(\frac{\vec{f}_{2}}{\xi})\cdot(\sqrt{\rho}\nabla u_{1})-(\frac{\vec{f}_{1}}{\xi})\cdot(\sqrt{\rho}\nabla u_{2}),\quad \phi=\frac{u_{2}}{\xi}\phi_{1}-\frac{u_{1}}{\xi}\phi_{2}.
    \end{equation*}
    We then have that
    \begin{equation*}
        \mathrm{div}(\xi^{2}\tilde{A}\nabla w)=\mathrm{div}(\xi^{2}\vec{f})+\frac{\xi}{\sqrt{\rho}}h+\xi\phi.
    \end{equation*}
    Notice that as $\vec{f}_{i}\cdot\vec{e}_{n+1}=0$, we have that $(h-c\frac{\xi}{2\sqrt{\rho}})\in C^{\alpha}(0)\cap O(|x|^{\alpha})$ for some constant $c$, so by absorbing $\frac{\xi}{\sqrt{\rho}}h$ into $\mathrm{div}(\xi^{2}\vec{f})$ (see Remark~\ref{absorb remark}), we see that $w$ satisfies an equation in the form \eqref{main}.
    
    By the assumption of $\vec{f}_{i}$ and by the regularity of $u_{i}$, we know
    \begin{equation*}
        \tilde{A},\vec{f},h\in C^{\alpha}(0),\quad \phi\in L^{\infty}\mbox{ near the origin}.
    \end{equation*}
    Besides, by the assumption $u_{2}/\xi\geq c_{0}>0$, the matrix $\tilde{A}$ also satisfies the uniformly elliptic condition
    \begin{equation*}
        \tilde{\lambda}I\leq\tilde{A}\leq\tilde{\Lambda}I.
    \end{equation*}
    Therefore, $w$ at the origin is $C^{1,\alpha}$ in average sense by applying Proposition~\ref{gammaschauder}.
    \item Step 3: Such an argument works not only at the origin, but also for all points on $\mathbb{R}^{n-1}$. More precisely, assume that at $x^{T}\in\mathbb{R}^{n-1}$, $\ds\kappa=(\frac{\bar{a}^{nn}}{\bar{a}^{n+1,n+1}}(x^{T}))^{1/2}$, then we set
\begin{equation}
\rho_{\kappa}=\sqrt{x_{n}^{2}+\kappa^{2}x_{n+1}^{2}}.
\end{equation}
It turns out that $w$ is approximated by a linear ``polynomial'' $L$ in the variables $(x_{1},\cdots,x_{n},\rho_{\kappa})$, such that
\begin{equation}\label{eq. 5.3}
    \|L\|_{C^{0,1}(B_{1}(x^{T})\setminus S)}^{2}+\frac{1}{r^{n+2+2\alpha}}\int_{B_{r}(x^{T})\setminus S}\frac{|w-L|^{2}}{\rho}dx\leq C.
\end{equation}
By overlapping $B_{r}(x^{T})$ between different $x^{T}\in\mathbb{R}^{n-1}$, we obtain that $D^{T}w\in C^{\alpha}(B_{1/2}\cap\mathbb{R}^{n-1})$, so restriction of $w$ on $\mathbb{R}^{n-1}$ is $C^{1,\alpha}$ in the classical sense. Precisely speaking, let $L_{0}$ and $L_{y^{T}}$ be two linear ``polynomials'' in the variables $(x_{1},\cdots,x_{n},\rho)$ and $(x_{1},\cdots,x_{n},\rho_{\kappa_{y^{T}}})$, respectively. Let $r=|y^{T}|$, then by \eqref{eq. 5.3}, we have:
\begin{align*}
    &\frac{1}{r^{n+2+2\alpha}}\int_{B_{r}\cap B_{r}(y^{T})\setminus S}\frac{|w-L_{0}|^{2}}{\rho}dx\leq C,\\
    &\frac{1}{r^{n+2+2\alpha}}\int_{B_{r}\cap B_{r}(y^{T})\setminus S}\frac{|w-L_{y^{T}}|^{2}}{\rho}dx\leq C.
\end{align*}
These two estimates, together with the triangle equality, imply the $C^{0,1}$ smallness of $|L_{y^{T}}-L_{0}|$. In particular,
\begin{equation*}
    |D^{T}w(y^{T})-D^{T}w(0)|=|D^{T}L_{y^{T}}-D^{T}L_{0}|\leq C r^{\alpha}=C|y^{T}|^{\alpha}.
\end{equation*}
\end{itemize}
\end{proof}

\section{Proof of Theorem~\ref{application}}
In this section, to avoid ambiguity, we use $y$-coordinate to represent the coordinate before straightening the boundary, and $x$-coordinate to be the new coordinate. We re-write \eqref{ytox abuse} in a more clear way as
\begin{equation}\label{ytox}
    x_{i}=y_{i},\forall i\leq n-1,\quad x_{n}=y_{n}-\gamma(y^{T}),\quad x_{n+1}=y_{n+1}.
\end{equation}
The equation \eqref{U in y abuse} will be better written as
\begin{equation}\label{U in y}
    \partial_{y_{p}}(b^{pq}\partial_{y_{q}}U)=F(y).
\end{equation}
Here, we use the matrix $B(y)$ as the original matrix in $y$-coordinate satisfying the assumptions (a)-(d) mentioned in the introduction.

Assume that $U(y)\geq0$ solves the thin obstacle problem \eqref{U in y} whose blow-up is $C^{1,\alpha}$-converging to a $3/2$-homogeneous solution near the regular point $0$. After the coordinate change \eqref{ytox}, we have that $u_{m}=U_{y_{m}}$ with $1\leq m\leq n$ satisfy the equations:
\begin{equation}\label{equationu}
    \partial_{x_{i}}(a^{ij}\partial_{x_{j}}u_{m})+\partial_{x_{i}}(\frac{\partial x_{i}}{\partial y_{p}}\partial_{y_{m}}b^{pq}\cdot u_{q})=\partial_{y_{m}}F(y),\quad a^{ij}=b^{pq}\frac{\partial x_{i}}{\partial y_{p}}\frac{\partial x_{j}}{\partial y_{q}}.
\end{equation}
One can check that matrix $A$ also satisfies assumptions (a)-(d). We write
\begin{equation}
    \vec{f}_{m}=-\frac{\partial x_{i}}{\partial y_{p}}\partial_{y_{m}}b^{pq}\cdot u_{q}\vec{e}_{i},\quad\phi_{m}=\partial_{y_{m}}F(y(x)),
\end{equation}
then
\begin{equation}
    \mathrm{div}(A\nabla u_{m})=\mathrm{div}(\vec{f}_{m})+\phi_{m}.
\end{equation}
We also notice that the assumption $b^{n+1,i}=b^{i,n+1}=0$ for $i\leq n$ implies that $\vec{f}_{m}\cdot\vec{e}_{n+1}=0$ for all $m\leq n$. In fact, since
\begin{equation*}
    \frac{\partial x_{n+1}}{\partial y_{q}}=\delta_{n+1,p},\quad b^{n+1,q}=b^{n+1,n+1}\delta_{q,n+1},
\end{equation*}
and since $b^{n+1,n+1}$ is tangentially constant, we have
\begin{equation}\label{eq. additional explanation of fm being tangential}
    \vec{f}_{m}\cdot\vec{e}_{n+1}=-\frac{\partial x_{n+1}}{\partial y_{p}}\partial_{y_{m}}b^{pq}\cdot u_{q}=-\partial_{y_{m}}b^{n+1,n+1}\cdot u_{n+1}=0.
\end{equation}

Since the coordinate change \eqref{ytox} is $C^{1,\alpha}$ and $F(y)\in W^{1,\infty}$, we know $\phi_{m}\in L^{\infty}$ in the $x$-coordinate. Besides, once we know all $u_{m}$'s for $m\leq n$ have property $(\mathcal{F})$ (to be shown in Step 1 below), then so do $\vec{f}_{m}$'s.

Like in \eqref{phi also absorb}, we again absorb $\phi_{m}$ into the divergence term in Step 1\&2.
\begin{itemize}
    \item Step 1: Let $\vec{f}_{m}'$ be exactly the same as that in \eqref{phi also absorb} (so that $\phi_{m}$ is absorbed into the divergence term). By Theorem~\ref{Holder estimate}, as $\ds\sqrt{\rho}\vec{f}_{m}'$ are $C^{\alpha}(0)$ and vanish at $\mathbb{R}^{n-1}$, we have that $u_{m}$'s for $m\leq n$ are all $(\mathcal{F})$ in $B_{7/8}$.
    \item Step 2: We now show the non-degeneracy of $u_{m}$. Without loss of generality, we assume that $B(0)=\delta^{ij}$ and $\nabla\Gamma(0)=0$, then $a^{ij}(0)=\delta^{ij}$. As the blow-up of $U$ converges in $C^{1,\alpha}$ to the positive $3/2$ harmonic homogeneous solution $\ds Re\Big((x_{n}+ix_{n+1})^{3/2}\Big)$, we have that the blow-up of $u_{n}=\partial_{y_{n}}U$ converges to $\ds\xi=Re\Big((x_{n}+ix_{n+1})^{1/2}\Big)$ in $C^{\alpha}$ sense. As we have assumed that
    \begin{equation*}
        [A]_{C^{\alpha}},[\partial_{i}A]_{C^{\alpha}},\|\nabla F\|_{L^{\infty}}<\infty,
    \end{equation*}
    then by a suitable rescale, we can assume that
    \begin{equation*}
        [A]_{C^{\alpha}},[\partial_{i}A]_{C^{\alpha}},\|\nabla F\|_{L^{\infty}}\ll1.
    \end{equation*}
    When $\alpha<1/2$, we then have that $\sqrt{\rho}\vec{f}_{n}'$ vanish at $S$ and
\begin{equation}\label{eq. obstruction reason}
    [A]_{C^{\alpha}},\|u_{n}-\xi\|_{L^{2}},[\sqrt{\rho}\vec{f}_{n}']_{C^{\alpha}(0)}\leq\varepsilon_{0}
\end{equation}
By Proposition~\ref{hopf}, we see that $u_{n}/\xi\geq c_{0}>0$ in $B_{3/4}$.
\item Step 3: Finally, as discussed before, $u_{m}$'s being $(\mathcal{F})$ implies that $\vec{f}_{m}$'s for $m\leq n$ all have property $(\mathcal{F})$, so by Theorem~\ref{HOBH straight}, we see that $\ds w_{m}=\frac{u_{m}}{u_{n}}$ is $C^{1,\alpha}$ in average sense at the origin for $m\leq n$, and in particular, $w_{m}$ is $C^{1,\alpha}$ when restricted to $\mathbb{R}^{n-1}$ or $\Gamma$.
\end{itemize}
\begin{remark}\label{rmk. obstruction of alpha}
    Notice that \eqref{eq. obstruction reason} is the main reason why we assume that $\alpha<1/2$. The obstruction here is due to the fact that we don't have $\vec{f}_{n}'\equiv0$ on $\{x^{\perp}=0\}$.
\end{remark}

\appendix
\section{``Equivalence'' of regularity}\label{equivalence}
In the Appendix, we prove the ``equivalence'' of properties $(\mathcal{F})$, $(\mathcal{F}_{1})$, $(\mathcal{F}_{2})$, $(\mathcal{F}_{3})$. The most obvious relation is clearly $(\mathcal{F}_{3})\Rightarrow(\mathcal{F}_{2})\Leftrightarrow(\mathcal{F}_{1})$ up to a shrinking of radius.

The following lemmas prove the rest of ``equivalence''. In this section, when $\bar{u}_{\bar{A}}$ corresponds to the matrix $A(x^{T})$ where $x^{T}\in\mathbb{R}^{n-1}$, we can also simply write the homogeneous solution $\bar{u}_{A(x^{T})}$ as $\bar{u}_{x^{T}}$.
\begin{lemma}\label{sheaf}
    If $\lambda I\leq A\leq\Lambda I$, then $(\mathcal{F}_{2})\Rightarrow(\mathcal{F}_{3})$ up to a shrinking of radius.
\end{lemma}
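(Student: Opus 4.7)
\emph{Plan.} My plan is to exploit that for any point $y \in B_r(x^T) \setminus S$, its horizontal projection $y^T_\ast := (y_1, \ldots, y_{n-1}, 0, 0)$ satisfies $y \in Cone_{|y^\perp|}(y^T_\ast)$, so property $(\mathcal{F}_2)$ is always available at a well-chosen base point, even when $y$ itself lies outside $Cone_r(x^T)$. I would then convert the resulting estimate for $f/\bar{u}_{y^T_\ast}$ into one for $f/\bar{u}_{x^T}$, using that the homogeneous solutions $\bar{u}_{y^T_\ast}$ and $\bar{u}_{x^T}$ in \eqref{diffenent blow up} differ only through the stretching constant $\kappa$, which is bounded by uniform ellipticity and changes by $O(|y^T_\ast - x^T|^\alpha)$ because $A \in C^\alpha$. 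For the pointwise bound on $B_r(x^T)$, this is immediate: $(\mathcal{F}_2)$ at $y^T_\ast$ combined with $\bar{u}_{y^T_\ast}(y)/\bar{u}_{x^T}(y) \leq C(\lambda, \Lambda)$ yields $|f(y)/\bar{u}_{x^T}(y)| \leq C'$.

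\emph{Hölder comparison.} For $y \in B_r(x^T)$ and $z \in Cone_r(x^T)$ I would split into cases. If $y \in Cone_{2r}(x^T)$ the estimate follows from $(\mathcal{F}_2)$ at $x^T$ applied to the slightly larger cone (available after shrinking the ambient radius). Otherwise $|y^\perp| < |y^T - x^T|/2$, and a short geometric calculation using $|z^T - x^T| \leq |z^\perp| \leq |y^\perp| + |y-z|$ gives $dist(y, z) \gtrsim |y^T - x^T|$, hence both $|y^T_\ast - x^T|$ and $|z^T_\ast - x^T|$ are $\lesssim dist(y, z)$. I would then decompose
\begin{equation*}
\frac{f(y)}{\bar{u}_{x^T}(y)} - \frac{f(z)}{\bar{u}_{x^T}(z)} = [v(y) - v(z)] h(y) + v(z) [h(y) - h(z)],
\end{equation*}
with $v(w) := f(w)/\bar{u}_{w^T_\ast}(w)$ and $h(w) := \bar{u}_{w^T_\ast}(w)/\bar{u}_{x^T}(w)$. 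The correction obeys $|h - 1| \leq C [A]_{C^\alpha} |w^T_\ast - x^T|^\alpha$, so the second term is bounded by a constant times $dist(y, z)^\alpha$. The first term $|v(y) - v(z)|$ is controlled by chaining $(\mathcal{F}_2)$ along a short sequence of interpolating base points between $y^T_\ast$ and $z^T_\ast$, using that consecutive cones overlap and that each change of base point produces a bounded correction.

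\emph{Main obstacle.} The crux is controlling $|v(y) - v(z)|$, since $v$ uses different base points at $y$ and $z$. The successive switches produce factors $\bar{u}_{y^T_\ast}/\bar{u}_{z^T_\ast}$ whose oscillation depends only on the angle of the perpendicular component and is governed by $|\kappa(A(y^T_\ast)) - \kappa(A(z^T_\ast))| \lesssim |y^T_\ast - z^T_\ast|^\alpha \lesssim dist(y, z)^\alpha$. The requirement that the whole chain of cones fit inside $B_R$ is what forces the conclusion to hold only up to a shrinking of radius; this is the technical maneuver inspired by \cite{JV23}.
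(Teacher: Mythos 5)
Your plan correctly identifies that the issue is changing the homogeneous solution at the base point, and that one should project $y$ to the slit to get a point where $(\mathcal{F}_2)$ applies, but the way you implement this creates gaps that the paper's argument avoids.

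The paper uses a \emph{single} intermediate point. With $x^T=0$ and $\bar u_0=\xi$, they set $w = y^T + \frac{|y^T|}{|y^\perp|}\,y^\perp$, which lies simultaneously in $Cone_r(0)$ and $Cone_r(y^T)$ and satisfies $|y-w|\leq 2|y-z|$, $|z-w|\leq 3|y-z|$. Crucially, since $0,\,y^T,\,y,\,w$ are co-planar (same angle $\arg(x_n+ix_{n+1})$), the ratio $\bar u_{y^T}/\xi$ \emph{takes the identical value at $y$ and $w$}, because stretched homogeneous solutions differ only through that angle. This means the comparison at $y^T$ and the comparison at $0$ can be glued along $w$ with no change-of-base-point correction. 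In particular, the paper's proof never compares $\bar u_{y^T}$ with $\bar u_{x^T}$, so it needs only $\lambda I\leq A\leq\Lambda I$, exactly as the lemma states.

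Your version, by contrast, requires $[A]_{C^\alpha}$ to be small, since you pay a correction factor $h(w)=\bar u_{w^T_\ast}/\bar u_{x^T}$ whenever you switch base points. This already makes your lemma weaker than the one being proven. Beyond that, two steps do not survive scrutiny. First, your claimed geometric bound $dist(y,z)\gtrsim |y^T - x^T|$ fails as soon as $y$ is only slightly outside $Cone_r(x^T)$: from $|y^T-x^T|-|y^\perp|\leq 2|y-z|$ one only gets a lower bound proportional to $|y^T-x^T|-|y^\perp|$, and this vanishes as $y$ approaches the cone; so your estimate for the second term $v(z)[h(y)-h(z)]$ does not close (note also $|z^T_\ast-x^T|\leq|z^\perp|$ can be as large as $r$, independently of $dist(y,z)$). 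Second, the chaining for $v(y)-v(z)$ is not merely technical: when $|y^\perp|$ is small, the cone $Cone_r(y^T_\ast)$ you can apply $(\mathcal{F}_2)$ inside has radius only $\approx|y^\perp|$, so traversing a tangential distance of order $|y^T-x^T|$ requires $\approx |y^T-x^T|/|y^\perp|$ links, each contributing $|y^\perp|^\alpha$; the sum scales like $|y^T-x^T|\,|y^\perp|^{\alpha-1}$, which blows up as $|y^\perp|\to0$. The coplanar projection in the paper's proof circumvents precisely this: it jumps from deep inside $B_r\setminus Cone_r$ directly to the cone in one step, at cost $\leq 2|y-z|$, with no accumulation of base-point corrections. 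You should replace the chaining by this single co-planar jump, at which point your decomposition collapses to the paper's two-term estimate and the $C^\alpha$ hypothesis on $A$ becomes unnecessary.
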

\begin{proof}
    For simplicity, we assume that $x^{T}=0$, $A(0)=\delta^{ij}$, meaning that $\bar{u}_{0}=\xi$. If $y\in B_{r}(0)\setminus(S\cup Cone_{r})$, $z\in Cone_{r}(0)$, then $|y^{\perp}|\leq|y^{T}|$ and we let
    \begin{equation}\label{project to cone}
        w=y^{T}+\frac{|y^{T}|}{|y^{\perp}|}y^{\perp}\in Cone_{r}(0)\cap Cone_{r}(y^{T}).
    \end{equation}
    It's not hard to verify that
    \begin{equation*}
        |y-w|\leq2|y-z|,\quad|z-w|\leq3|y-z|.
    \end{equation*}
    In the cone $Cone_{r}(y^{T})$ we also have for $\bar{u}_{y^{T}}=\bar{u}_{A(y^{T})}$,
    \begin{equation*}
        C(\lambda,\Lambda)^{-1}\leq\bar{u}_{y^{T}}/\xi\leq C^{-1}(\lambda,\Lambda),\quad[f/\bar{u}_{y^{T}}]_{C^{\alpha}(Cone_{r})(y^{T})}\leq C.
    \end{equation*}
    Since $0,y^{T},y,w$ are co-planar, we have $\ds\frac{\bar{u}_{y^{T}}}{\xi}(w)=\frac{\bar{u}_{y^{T}}}{\xi}(y)$. Therefore,
    \begin{align*}
        |\frac{f(y)}{\xi(y)}-\frac{f(z)}{\xi(z)}|\leq&|\frac{f(y)}{\bar{u}_{y^{T}}(y)}\frac{\bar{u}_{y^{T}}(y)}{\xi(y)}-\frac{f(w)}{\bar{u}_{y^{T}}(w)}\frac{\bar{u}_{y^{T}}(w)}{\xi(w)}|+|\frac{f(w)}{\xi(w)}-\frac{f(z)}{\xi(z)}|\\
        \leq&\frac{\bar{u}_{y^{T}}}{\xi}(y)|\frac{f}{\bar{u}_{y^{T}}}(y)-\frac{f}{\bar{u}_{y^{T}}}(w)|+|\frac{f}{\xi}(w)-\frac{f}{\xi}(z)|\\
        \leq&C(|y-w|^{\alpha}+|w-z|^{\alpha})\leq(2^{\alpha}+3^{\alpha})C|y-z|^{\alpha}.
    \end{align*}
\end{proof}
\begin{lemma}
    If $\lambda I\leq A\leq\Lambda I$ and $[A]_{C^{\alpha}}$ is small, then $(\mathcal{F})\Rightarrow(\mathcal{F}_{1})$ up to a shrinking of radius.
\end{lemma}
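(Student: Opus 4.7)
The plan is to reduce the claim to a direct estimate on the ratio of the homogeneous solutions. Suppose $f$ satisfies $(\mathcal{F})$, so $f(y) = \bar u_{A(y^T)}(y)\, h(y)$ with $h$ uniformly $C^\alpha$ on $B_R \setminus S$. Fix a cone $Cone_r(x_0^T) \subset B_R \setminus S$ and write
\[
\frac{f(y)}{\bar u_{A(x_0^T)}(y)} = F(y)\, h(y), \qquad F(y) := \frac{\bar u_{A(y^T)}(y)}{\bar u_{A(x_0^T)}(y)}.
\]
Since $h$ is already $C^\alpha$, the problem reduces to showing that the sup norm and $C^\alpha$ seminorm of $F$ on the conic annulus $Cone_r(x_0^T) \setminus Cone_{r/2}(x_0^T)$ are bounded independently of $r$.

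From the explicit formula \eqref{diffenent blow up}, $F(y)$ depends on $y$ only through $\kappa_{y^T} := \kappa(A(y^T))$ and the direction of $y^\perp$, since both numerator and denominator are $1/2$-homogeneous in $y^\perp$. Uniform ellipticity bounds $\kappa_{y^T}$ away from $0$ and $\infty$, so $F$ and $F^{-1}$ are bounded. For $y,z$ in the conic annulus I introduce the intermediate point $z' = (y^T, z^\perp)$ and split
\[
|F(y) - F(z)| \le |F(y) - F(z')| + |F(z') - F(z)|.
\]
In the second term only $\kappa$ changes, and by the smoothness of $F$ in $\kappa$ together with the $C^\alpha$ assumption on $A$,
\[
|F(z') - F(z)| \le C\,|\kappa_{y^T} - \kappa_{z^T}| \le C [A]_{C^\alpha}|y^T - z^T|^\alpha \le C[A]_{C^\alpha}|y-z|^\alpha.
\]

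The main obstacle is the first term. With $\kappa_{y^T}$ fixed, $F$ varies only through the angle $\theta(y^\perp) = \arg(y_n + iy_{n+1})$, whose Lipschitz constant as a function of $y^\perp$ is $\sim 1/|y^\perp| \sim 1/r$ in the annulus. A naive bound would give a Lipschitz constant for $F$ that blows up as $r \to 0$, not a uniform H\"older bound. The resolution uses two observations: first, a direct inspection of \eqref{diffenent blow up} shows $F \equiv 1$ whenever $\kappa_{y^T} = \kappa_{x_0^T}$, so $|\partial_\theta F| \le C\,|\kappa_{y^T} - \kappa_{x_0^T}|$; second, $|y^T - x_0^T| \le |y^\perp| \le r$ inside the cone, so $|\kappa_{y^T} - \kappa_{x_0^T}| \le [A]_{C^\alpha}\,r^\alpha$. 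Combining these with the angular bound $|\theta(y^\perp) - \theta(z^\perp)| \le C|y^\perp - z^\perp|/r$ yields
\[
|F(y) - F(z')| \le C\,[A]_{C^\alpha}\, r^{\alpha - 1}\, |y - z|.
\]

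Finally I interpolate using the fact that on the conic annulus $|y - z| \le Cr$:
\[
C[A]_{C^\alpha} r^{\alpha - 1}|y - z| = C[A]_{C^\alpha}\,(|y-z|/r)^{1-\alpha}\,|y-z|^\alpha \le C'[A]_{C^\alpha}\,|y-z|^\alpha,
\]
which gives $|F(y) - F(z)| \le C|y-z|^\alpha$ with $C$ independent of $r$. Multiplying by $h$ and noting that the Euclidean estimate implies the path-distance estimate (since $|y-z| \le dist(y,z)$, and $F \cdot h$ inherits the product $C^\alpha$ bound), we obtain $(\mathcal{F}_1)$; the ``shrinking of radius'' accommodates the inclusion $Cone_r(x_0^T) \subset B_R$.
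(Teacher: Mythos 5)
Your argument is correct and matches the paper's proof almost step for step: your intermediate point $z' = (y^T, z^\perp)$ reproduces the paper's decomposition into $D_2$ (the $\kappa$-variation term) and $D_3$ (the angle-variation term), your bound $|\partial_\theta F| \le C|\kappa_{y^T} - \kappa_{x_0^T}| \le C[A]_{C^\alpha}|y^T - x_0^T|^\alpha$ is precisely the paper's key Lipschitz-in-$\theta$ estimate $|\partial_\theta (\bar u_{y^T}/\xi)| \le C|y^T|^\alpha$, and the interpolation $r^{\alpha-1}|y-z| = (|y-z|/r)^{1-\alpha}|y-z|^\alpha$ is identical. One detail worth making explicit: the naive angle bound $|\theta(y^\perp) - \theta(z^\perp)| \le C|y^\perp - z^\perp|/r$ fails in Euclidean distance when $y^\perp$ and $z^\perp$ straddle the slit, and your Euclidean $C^\alpha$ estimate on $F$ is rescued only because $\bar u_\kappa$, and hence $F$, is even in $\theta$ (so $F$ is continuous across $S$ and the short-way angular increment controls it); the paper sidesteps this by working with the path distance from the start, which is in any case all that $(\mathcal{F}_1)$ requires.
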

\begin{proof}
    For simplicity, we assume that $x^{T}=0$, $A(0)=\delta^{ij}$, meaning that $\bar{u}_{0}=\xi$. It suffices to show that for every $y,z\in Cone_{r}(0)\setminus Cone_{r/2}(0)$, we have
    \begin{equation*}
        |\frac{\bar{u}_{y^{T}}(y)}{\xi(y)}h(y)-\frac{\bar{u}_{z^{T}}(z)}{\xi(z)}h(z)|\leq C\cdot dist(y,z)^{\alpha}.
    \end{equation*}
    The left-hand-side is a sum of $\ds D_{1}=\frac{\bar{u}_{z^{T}}(z)}{\xi(z)}\cdot|h(y)-h(z)|$ and
    \begin{equation*}
        D_{2}=|h(y)|\cdot|\frac{\bar{u}_{y^{T}}(z)}{\xi(z)}-\frac{\bar{u}_{z^{T}}(z)}{\xi(z)}|,\quad D_{3}=|h(y)|\cdot|\frac{\bar{u}_{y^{T}}(y)}{\xi(y)}-\frac{\bar{u}_{y^{T}}(z)}{\xi(z)}|.
    \end{equation*}
    As $h(x)$ is $C^{\alpha}$, we have $D_{1}\leq C\cdot dist(y,z)^{\alpha}$. When $z$ is fixed, $\ds\frac{\bar{u}_{\bar{A}}(z)}{\xi(z)}$ is Lipschitz about $\bar{A}$, which is a $C^{\alpha}$ function as $[A]_{C^{\alpha}}$ is small. Therefore, we also have
    \begin{equation*}
        D_{2}\leq C\cdot dist(y^{T},z^{T})^{\alpha}\leq C\cdot dist(y,z)^{\alpha}.
    \end{equation*}
    To estimate $D_{3}$, we first use complex coordinate to write
    \begin{equation*}
        y_{n}+iy_{n+1}=r_{y}e^{i\theta_{y}},\quad z_{n}+iz_{n+1}=r_{z}e^{i\theta_{z}},\quad\theta_{y},\theta_{z}\in(-\pi,\pi).
    \end{equation*}
    It follows that $\bar{u}_{y^{T}}/\xi$ is determined and Lipschitz by the argument angle $\theta_{y},\theta_{z}$. Besides, as $A\in C^{\alpha}$, the Lipschitz norm about $\theta$ is in fact bounded by
    \begin{equation*}
        \Big|\partial_{\theta}\frac{\bar{u}_{y^{T}}(r,\theta)}{\xi(r,\theta)}\Big|\leq C|y^{T}|^{\alpha}
    \end{equation*}
    The angle between $y^{\perp},z^{\perp}$ without crossing the slit $S$, say $\angle{(y^{\perp},z^{\perp})}=|\theta_{y}-\theta_{z}|$, which takes value in $(0,2\pi)$, satisfies
    \begin{equation*}
        \angle{(y^{\perp},z^{\perp})}\leq\left\{\begin{aligned}
            \sin^{-1}\frac{dist(y^{\perp},z^{\perp})}{|y^{\perp}|}&,\ \mbox{if}\ dist(y^{\perp},z^{\perp})<|y^{\perp}|\\
            2\pi&,\ \mbox{if}\ dist(y^{\perp},z^{\perp})\geq|y^{\perp}|
        \end{aligned}\right.\leq C\frac{dist(y^{\perp},z^{\perp})}{|y^{\perp}|}.
    \end{equation*}
    Notice that when $y\in Cone_{r}$, $|y^{\perp}|\geq|y^{T}|$, so
    \begin{align*}
        D_{3}\leq&C\cdot\Big\|\partial_{\theta}\frac{\bar{u}_{y^{T}}(r,\theta)}{\xi(r,\theta)}\Big\|_{L^{\infty}}\cdot|\theta_{y}-\theta_{z}|\leq C|y^{T}|^{\alpha}\cdot\frac{dist(y^{\perp},z^{\perp})}{|y^{\perp}|}\\
        \leq&C\Big(\frac{dist(y^{\perp},z^{\perp})}{|y^{\perp}|}\Big)^{1-\alpha}dist(y^{\perp},z^{\perp})^{\alpha}\leq C3^{1-\alpha}dist(y,z)^{\alpha}.
    \end{align*}
    Here we used $2|y^{\perp}|\geq|z^{\perp}|$ in the last step, since $y,z\in Cone_{r}(0)\setminus Cone_{r/2}(0)$.
\end{proof}
\begin{lemma}
    If $\lambda I\leq A\leq\Lambda I$ and $[A]_{C^{\alpha}}$ is small, then $(\mathcal{F}_{3})\Rightarrow(\mathcal{F})$ up to a shrinking of radius.
\end{lemma}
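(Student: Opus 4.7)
The plan is to show that for some small $r_0 = r_0(n,\lambda,\Lambda,\alpha)\cdot R$, the candidate function
\begin{equation*}
h(x) := f(x)/\bar{u}_{A(x^T)}(x)
\end{equation*}
is $C^\alpha$ on $B_{r_0}\setminus S$ with respect to the path distance, which is exactly property $(\mathcal{F}_A)$. Its $L^\infty$ bound is immediate from applying $(\mathcal{F}_3)$ with base point $x^T$. The heart of the argument is the H\"older seminorm estimate $|h(y)-h(z)| \leq C \cdot dist(y,z)^\alpha$ for pairs $y,z \in B_{r_0}\setminus S$.

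For a given pair $(y,z)$ I would pick a common base $x^T$ close to both $y^T$ and $z^T$ (e.g.\ the midpoint, so that $|x^T - y^T|,|x^T - z^T| \leq |y-z|$), and split
\begin{equation*}
h(y) - h(z) = A_1 + A_2 + A_3,
\end{equation*}
\begin{equation*}
A_1 := h(y)\left[1 - \frac{\bar{u}_{y^T}(y)}{\bar{u}_{x^T}(y)}\right],\quad A_2 := \frac{f(y)}{\bar{u}_{x^T}(y)} - \frac{f(z)}{\bar{u}_{x^T}(z)},\quad A_3 := h(z)\left[\frac{\bar{u}_{z^T}(z)}{\bar{u}_{x^T}(z)} - 1\right].
\end{equation*}
The terms $A_1,A_3$ measure the change of reference homogeneous solution, while $A_2$ is controlled by the already-established equivalence $(\mathcal{F}_1)\Leftrightarrow(\mathcal{F}_2)\Leftrightarrow(\mathcal{F}_3)$.

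For $A_1, A_3$ the key auxiliary step is a uniform Lipschitz bound on the family $\{\bar{u}_\kappa\}$: for $\kappa_1,\kappa_2$ in the compact range dictated by $\lambda,\Lambda$ and any $v \in \mathbb{R}^{n+1}\setminus S$,
\begin{equation*}
\left|\log\frac{\bar{u}_{\kappa_1}(v)}{\bar{u}_{\kappa_2}(v)}\right| \leq C|\kappa_1-\kappa_2|.
\end{equation*}
This I would verify by direct differentiation of the explicit formula \eqref{diffenent blow up}: setting $R_\kappa := \sqrt{v_n^2 + \kappa^2 v_{n+1}^2}$ gives $\partial_\kappa \log \bar{u}_\kappa = (R_\kappa - v_n)/(2\kappa R_\kappa)$, uniformly bounded in $v$. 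Since $\kappa\circ A$ is $C^\alpha$ (with small norm) and $\|h\|_{L^\infty} \leq C$, this yields $|A_1| + |A_3| \leq C[A]_{C^\alpha}|y-z|^\alpha$.

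For $A_2$ I would invoke $(\mathcal{F}_2)$. When $|y^T - z^T| \leq c_0 \max(|y^\perp|,|z^\perp|)$, both $y$ and $z$ fit inside a common cone $Cone_r(x^T)$ with $r \sim \max(|y^\perp|,|z^\perp|)$, so $(\mathcal{F}_2)$ directly gives $|A_2| \leq C\cdot dist(y,z)^\alpha$. In the opposite regime --- where $y$ and $z$ are both much closer to $S$ than they are to each other --- I plan to bridge them through a three-step chain $y \to w \to w' \to z$, with $w := (y^T,\tilde v)$, $w' := (z^T,\tilde v)$ for a common $\tilde v \in \mathbb{R}^2 \setminus \{x_n \leq 0, x_{n+1}=0\}$ of size $|\tilde v| \sim |y - z|$. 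Each leg then falls into an admissible cone-plus-ball configuration for $(\mathcal{F}_3)$, the total chain length is $\lesssim dist(y,z)$, and summing over legs gives $|A_2| \leq C \cdot dist(y,z)^\alpha$. The main technical obstacle will be this case analysis, in particular ensuring that the chain construction respects path distance uniformly when $y,z$ lie on opposite sides of $S$, and tracking the accumulated shrinkage of radii from each invocation of the equivalences; the final $r_0$ depends only on $n,\lambda,\Lambda,\alpha$.
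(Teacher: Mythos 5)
Your overall strategy is sound and the Lipschitz bound on $\kappa\mapsto\log\bar{u}_\kappa$ that you use for $A_1,A_3$ is exactly the right ingredient and matches what the paper needs. However, there are two issues with your treatment of $A_2$, one a genuine flaw, the other a missed simplification that would have dissolved the whole difficulty.

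The flaw: in your first regime you assert that $|y^T-z^T|\leq c_0\max(|y^\perp|,|z^\perp|)$ forces both $y$ and $z$ into a common cone $Cone_r(x^T)$ with $x^T$ the midpoint. That does not follow from the definition $Cone_r(x^T)=\{v:|v^\perp|\leq r,\ |v^T-x^T|\leq|v^\perp|\}$, because the cone constraint is pointwise: the point with the smaller $|\cdot^\perp|$, say $z$, needs $|z^T-x^T|\leq|z^\perp|$, and if $|z^\perp|\ll|y^\perp|$ this can fail even when $|y^T-z^T|\ll|y^\perp|$. So your case split does not actually isolate the ``both in a cone'' situation, and you would be forced to run the vaguely sketched chaining argument in essentially all nondegenerate configurations; that chain also silently switches the reference $\bar{u}$ at each leg, which introduces further error terms you have not accounted for.

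The missed simplification: $(\mathcal{F}_3)$ is tailor-made so that no chaining and no case analysis are needed. Do not take $x^T$ to be the midpoint; take it to be the tangential projection of one of the two points, say $x^T = y^T$. Then $y\in Cone_r(y^T)$ automatically (since $|y^T-y^T|=0\leq|y^\perp|$), and the other point $z$ lies in $B_r(y^T)$ for appropriate $r$. This is precisely the ``one in the cone, one in the ball'' configuration that $(\mathcal{F}_3)$ controls, so $\bigl|f(y)/\bar{u}_{y^T}(y)-f(z)/\bar{u}_{y^T}(z)\bigr|\leq C\,dist(y,z)^\alpha$ holds in a single step with no regime split. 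The remaining change-of-reference term $|h(z)|\cdot\bigl|\bar{u}_{z^T}(z)/\bar{u}_{y^T}(z)-1\bigr|\leq C|y^T-z^T|^\alpha$ is exactly your $A_3$ argument. That two-line decomposition is the paper's entire proof; your $A_1,A_3$ estimates survive, but the whole $A_2$ machinery collapses once the base point is chosen this way.
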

\begin{proof}
    We write $\ds h(x)=\frac{f(x)}{\bar{u}_{x^{T}}(x)}$ in $B_{R}$, then it suffices to show $h(x)\in C^{\alpha}(B_{R/100})$. Let $x,y\in B_{R/100}$, then as $x\in Cone_{R/10}(x^{T})$, we have that
    \begin{equation*}
        |h(x)-\frac{f(y)}{\bar{u}_{x^{T}}(y)}|=|\frac{f(x)}{\bar{u}_{x^{T}}(x)}-\frac{f(y)}{\bar{u}_{x^{T}}(y)}|\leq C|x-y|^{\alpha}.
    \end{equation*}
    Therefore, as it is obvious that $h\in L^{\infty}(B_{R/100})$, we have
    \begin{align*}
        |h(x)-h(y)|\leq&C|x-y|^{\alpha}+|h(y)|\cdot|\frac{\bar{u}_{y^{T}}(y)}{\bar{u}_{x^{T}}(y)}-1|\\
        \leq&C|x-y|^{\alpha}+C|x^{T}-y^{T}|^{\alpha}.
    \end{align*}
\end{proof}


\begin{thebibliography}{10}

\bibitem{AC04}
I.~Athanasopoulos and L.~A. Caffarelli.
\newblock Optimal regularity of lower dimensional obstacle problems.
\newblock {\em Journal of Mathematical Sciences (New York)}, 132(1):49--66,
  2004.

\bibitem{ACS08}
I.~Athanasopoulos, L.~A. Caffarelli, and S.~Salsa.
\newblock The structure of the free boundary for lower dimensional obstacle
  problems.
\newblock {\em American Journal of Mathematics}, 130(2):485--498, 2008.

\bibitem{CFMS81}
L.~Caffarelli, E.~Fabes, S.~Mortola, and S.~Salsa.
\newblock Boundary behavior of nonnegative solutions of elliptic operators in
  divergence form.
\newblock {\em Indiana Univ. Math. J.}, 30(4):621--640, 1981.

\bibitem{CS07}
Luis Caffarelli and Luis Silvestre.
\newblock An extension problem related to the fractional laplacian.
\newblock {\em Communications in Partial Differential Equations},
  32(8):1245--1260, 2007.

\bibitem{CSS08}
Luis~A. Caffarelli, Sandro Salsa, and Luis Silvestre.
\newblock Regularity estimates for the solution and the free boundary of the
  obstacle problem for the fractional laplacian.
\newblock {\em Inventiones mathematicae}, 171(2):425--461, 2008.

\bibitem{DS14}
Daniela De~Silva and Ovidiu Savin.
\newblock Boundary harnack estimates in slit domains and applications to thin
  free boundary problems.
\newblock {\em Revista Matemática Iberoamericana}, 32, 06 2014.

\bibitem{DS15}
Daniela De~Silva and Ovidiu Savin.
\newblock $c^{\infty}$ regularity of certain thin free boundaries.
\newblock {\em Indiana University Mathematics Journal}, 64(5):1575--1608, 2015.

\bibitem{DP23a}
Hongjie Dong and Tuoc Phan.
\newblock On parabolic and elliptic equations with singular or degenerate
  coefficients.
\newblock {\em Indiana Univ. Math. J.}, 72(4):1461--1502, 2023.

\bibitem{DP23b}
Hongjie Dong and Tuoc Phan.
\newblock Weighted mixed-norm l estimates for equations in non-divergence form
  with singular coefficients: The dirichlet problem.
\newblock {\em Journal of Functional Analysis}, 285:109964, 03 2023.

\bibitem{Fabes2}
E.~Fabes, D.~Jerison, and C.~Kenig.
\newblock The {W}iener test for degenerate elliptic equations.
\newblock {\em Ann. Inst. Fourier (Grenoble)}, 32(3):vi, 151--182, 1982.

\bibitem{Fabes3}
E.~B. Fabes, C.~E. Kenig, and D.~Jerison.
\newblock Boundary behavior of solutions to degenerate elliptic equations.
\newblock In {\em Conference on harmonic analysis in honor of {A}ntoni
  {Z}ygmund, {V}ol. {I}, {II} ({C}hicago, {I}ll., 1981)}, Wadsworth Math. Ser.,
  pages 577--589. Wadsworth, Belmont, CA, 1983.

\bibitem{Fabes1}
Eugene~B. Fabes, Carlos~E. Kenig, and Raul~P. Serapioni.
\newblock The local regularity of solutions of degenerate elliptic equations.
\newblock {\em Comm. Partial Differential Equations}, 7(1):77--116, 1982.

\bibitem{GSVG14}
Nicola Garofalo and Mariana Smit~Vega Garcia.
\newblock New monotonicity formulas and the optimal regularity in the signorini
  problem with variable coefficients.
\newblock {\em Advances in Mathematics}, 262:682--750, 2014.

\bibitem{GPSVG16}
Nicola Garofalo, Arshak Petrosyan, and Mariana Smit Vega~Garcia.
\newblock An epiperimetric inequality approach to the regularity of the free
  boundary in the signorini problem with variable coefficients.
\newblock {\em Journal de Mathématiques Pures et Appliquées}, 105, 01 2015.

\bibitem{G09}
Nestor Guillen.
\newblock Optimal regularity for the signorini problem.
\newblock {\em Calculus of Variations and Partial Differential Equations},
  36:533, 2009.

\bibitem{JV23}
Seongmin Jeon and Stefano Vita.
\newblock Higher order boundary harnack principles in dini type domains, 2023.

\bibitem{KPS15}
Herbert Koch, Arshak Petrosyan, and Wenhui Shi.
\newblock Higher regularity of the free boundary in the elliptic signorini
  problem.
\newblock {\em Nonlinear Analysis}, 126:3--44, 2015.
\newblock Sub-Riemannian Geometric Analysis and PDEs.

\bibitem{KRS16}
Herbert Koch, Angkana R{\"u}land, and Wenhui Shi.
\newblock The variable coefficient thin obstacle problem: Carleman
  inequalities.
\newblock {\em Advances in Mathematics}, 301:820--866, October 2016.

\bibitem{KRS17b}
Herbert Koch, Angkana Ruland, and Wenhui Shi.
\newblock The variable coefficient thin obstacle problem: Higher regularity.
\newblock {\em Advances in Differential Equations}, 22(11-12):793--866, January
  2017.

\bibitem{KRS17a}
Herbert Koch, Angkana R{\"u}land, and Wenhui Shi.
\newblock The variable coefficient thin obstacle problem: Optimal regularity
  and regularity of the regular free boundary.
\newblock {\em Annales de l'Institut Henri Poincare (C) Analyse Non Lineaire},
  34(4):845--897, July 2017.

\bibitem{PSU12}
A.~Petrosyan, H.~Shahgholian, and N.N. Uraltseva.
\newblock {\em Regularity of Free Boundaries in Obstacle-Type Problems}.
\newblock Graduate studies in mathematics. American Mathematical Society, 2012.

\bibitem{RS17}
Angkana R{\"u}land and Wenhui Shi.
\newblock Optimal regularity for the thin obstacle problem with $c^{0,\alpha}$
  coefficients.
\newblock {\em Calculus of Variations and Partial Differential Equations},
  56(5), October 2017.

\bibitem{TTV22}
Susanna Terracini, Giorgio Tortone, and Stefano Vita.
\newblock Higher order boundary harnack principle via degenerate equations,
  2024.

\end{thebibliography}

\end{document}